\newcommand{\bN}{ {\mathbb  N}}
\newcommand{\bR}{ {\mathbb R}}
\newcommand{\ie}{{\it i.e.}}
\def\a{\mathbf{a}}
\def\b{\mathbf{b}}
\newtheorem{theorem}{Theorem}[section]
\newtheorem{definition}[theorem]{Definition}
\newtheorem{lemma}[theorem]{Lemma}
\newtheorem{corollary}[theorem]{Corollary}
\newtheorem{remark}[theorem]{Remark}
\newtheorem{conjecture}[theorem]{Conjecture}
\newtheorem{proposition}[theorem]{Proposition}
\title{Log-concavity and log-convexity of series containing multiple Pochhammer symbols
\footnotetext{\textit{\textnormal{2020} AMS Mathematics Subject Classification: 26A51, 33C20, 33C60, 33F10}
\\ \textit{Key words and phrases: logarithmic concavity, logarithmic convexity, Tur\'{a}nian, rising factorial, Pochhammer symbol, hypergeometric functions, Fox-Wright function}
}}
\author{Dmitrii Karp \thanks{Holon Institute of Technology, Holon, Israel. Email: dimkrp@gmail.com}, Yi Zhang \thanks{Corresponding author. Department of Foundational  Mathematics, School of Mathematics and Physics, Xi'an Jiaotong-Liverpool University, 
 Suzhou, 215123, China.  Email: Yi.Zhang03@xjtlu.edu.cn}}
\date{\today}
\begin{document}

\maketitle

\begin{abstract}
In this paper, we study power series with coefficients equal to a product of a generic sequence and an explicitly given function of a positive parameter expressible in terms of the Pochhammer symbols. Four types of such series are treated. We show that logarithmic concavity (convexity) of the generic sequence leads to logarithmic concavity (convexity) of the sum of the series with respect to the argument of the explicitly given function.  The logarithmic concavity (convexity) is derived from a stronger property, namely, positivity (negativity) of the power series coefficients of the so-called generalized Tur\'{a}nian.   Applications to special functions such as the generalized hypergeometric function and the Fox-Wright function are also discussed. 
\end{abstract}

\section{Introduction}
Let $\bN$ be the set of positive integers and  $\bN_0 := \bN \cup \{0\}$, $\bR_{+}=[0,\infty)$.
Define a formal power series 
\begin{equation}\label{eq:f-defined}
f(\mu; x) = \sum_{n = 0}^\infty f_n\phi_n(\mu)\frac{x^n}{n!}
\end{equation}
with non-negative coefficients $f_n\phi_n(\mu)$ which depend continuously on a non-negative parameter $\mu$. Our main focus in this paper is the logarithmic concavity (convexity) of the function $\mu\to f(\mu;x)$, \ie, concavity (convexity) of $\mu\to\log(f(\mu;x))$ for a fixed $x$ in the convergence domain of the series  \eqref{eq:f-defined}.  To this end, we define the so-called ``generalized Tur\'{a}nian" for any $\alpha,\beta\ge0$  by the expression
\begin{equation}\label{eq:genTur-defined}
\Delta_f(\alpha,\beta;x)=f(\mu+\alpha;x)f(\mu+\beta;x)-f(\mu;x)f(\mu+\alpha+\beta;x)=\sum_{k=0}^{\infty}\delta_kx^k.
\end{equation}
It is well-known and is easy to see that the condition $\Delta_f(\alpha,\beta;x)\ge0$  ($\le0$) implies log-concavity (log-convexity) of $\mu\to f(\mu;x)$.  It is less trivial that for continuous functions the reverse implication also holds \cite{Ng1987}.  In this paper we will mostly deal with the stronger property: if the coefficients $\delta_k$ at all powers of $x$ in \eqref{eq:genTur-defined} are non-negative (non-positive) for all $\alpha,\beta\ge0$ we will say that $\mu\to f(\mu;x)$ is {\em coefficient-wise log-concave \emph{(}log-convex\emph{)}}.  If this property holds for $(\alpha,\beta)\in A$ for some subset $A$ of $\bR_{+}\times\bR_{+}$, we say that  $\mu\to f(\mu;x)$ is {\em coefficient-wise log-concave \emph{(}log-convex\emph{)} for shifts $\alpha,\beta$ in $A$}.  Note that coefficient-wise log-concavity/log-convexity is well-defined regardless of convergence or divergence of the series  \eqref{eq:f-defined}.

In our previous work \cite{KK2013,KKJMAA2013,KSJMAA2010} we considered a number of explicitly specified functions  $\phi_n(\mu)$, while a non-negative sequence $f_n$ remained generic.  In many cases this sequence will be required to satisfy one of the following properties.   
\begin{definition} \label{DEF:doublypositiveseq}
Let $\{f_k\}_{k = 0}^\infty$ be a non-negative nontrivial real sequence. We call $\{f_k\}_{k = 0}^\infty$ log-concave \emph{(}or  P\'{o}lya frequency two -- $PF_2$, or doubly positive\emph{)} if $f_k^2 \ge f_{k-1}f_{k+1}$ for each $k \in \bN$, and $\{f_k\}_{k=0}^\infty$ is supported on an interval of integers \emph{(}\ie, $f_N = 0$ implies either $f_{N+i} = 0$ for all $i \in \bN_0$ or $f_{N-i} = 0$ for $i=0,\ldots,N$\emph{)}. If $f_k^2\le f_{k-1}f_{k+1}$ for all $k \in \bN$, the sequence is called log-convex \emph{(}this inequality implies that a nontrivial sequence is strictly positive\emph{)}.
\end{definition}
Denote by $(\mu)_n$ the rising factorial, $(\mu)_0=1$, $(\mu)_{n}=\mu(\mu+1)\cdots(\mu+n-1)$.  In \cite{KSJMAA2010}, S.\:M.\:Sitnik and the first author  proved the following result: the function 
\begin{equation}\label{eq:KS2010}
\mu\to f(\mu; x)= \sum_{n=0}^{\infty}f_n \frac{(\mu)_{n}}{n!}x^n
\end{equation}
is coefficient-wise log-concave if $\{f_n\}_{n\ge0}$ is log-concave and coefficient-wise log-convex if $\{f_n\}_{n\ge0}$ is log-convex. This kind of duality is made possible by log-neutrality of the function $f(\mu;x)$ for $f_n\equiv C>0$ which is both log-concave and log-convex sequence (and it is the only sequence satisfying both these properties simultaneously).  Indeed, in this case $f(\mu;x)=C(1-x)^{-\mu}$, so that  $\Delta_f(\alpha,\beta;x)=0$ for all $\alpha,\beta$. The first goal of this paper is to investigate a generalization of this result to the case when $(\mu)_{n}/n!$ in \eqref{eq:KS2010} is replaced by $(\mu)_{nr}/(nr)!$ for $r=2,3,\ldots$.  Setting  $f_n\equiv C>0$ in this case does not lead to a log-neutral function, so the above kind of duality is not possible here. Hence, instead of one series \eqref{eq:KS2010} we will consider two different series depending on whether the sequence $\{f_n\}_{n\ge0}$ is assumed to be log-concave or log-convex.   The second, modified series was suggested by Ahn Ninh (private communication, 2017) by replacing $(\mu)_{n}/n!$ in \eqref{eq:KS2010} by $(\mu)_{nr}/(nr-1)!$ and starting the summation from $n=1$.  These two types of series are discussed in Section~\ref{SEC:mps}.  

Nevertheless, there exist other cases when setting $f_n\equiv C>0$ does lead to a log-neutral function. In particular, taking $\phi_n(\mu)=(\mu)_{2n}/(\mu+1)_{n}$ in \eqref{eq:f-defined} leads to such series. However, in this case we only managed to prove coefficient-wise log-concavity (log-convexity) of $\mu\to f(\mu;x)$ for suitably restricted shifts. The corresponding results and conjectures are discussed in Section~\ref{SEC:sps}. 

In Section~\ref{SEC:tps}, we deal with the series of the form \eqref{eq:f-defined} with $\phi_n(\mu)=(\mu)_{n}/(2\mu)_{n}$. We demonstrate that it is coefficient-wise log-convex for each non-negative sequence $\{f_n\}_{n\ge0}$ and conjecture coefficient-wise log-concavity when $\phi_n(\mu)$ is replaced by $[\phi_n(\mu)]^{-1}$.

In Section~\ref{SEC:app}, we illustrate our results with several applications. We show that they are well-suited for many special functions playing an important role in fractional calculus \cite{VK2020}. First, we remark that all our  claims can be immediately generalized to the series containing the so-called $k$-shifted factorials and $k$-Gamma functions~\cite{DP2007}. Next, new log-convexity/concavity statements are presented for the generalized hypergeometric functions and their parameter derivatives. Last but not least, we furnish similar statements for the Fox-Wright function.

\section{Preliminaries} \label{SEC:pre}

In this section, we present several lemmas which will serve as our main tools in the subsequent investigation. 

\begin{lemma}~\cite[Lemma 5]{KKJMAA2013} \label{LEM:positivity}
Suppose $u, v, r, s> 0$, $u = \max(u, v, r, s)$ and $uv > rs$. Then $u + v > r + s$.
\end{lemma}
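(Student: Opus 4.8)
The plan is to argue by contradiction, using the maximality of $u$ to force a linear order on all four quantities, after which one short algebraic step finishes.

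First I would observe that both the hypotheses and the conclusion are symmetric in $r$ and $s$, so I may assume $r \ge s$; the maximality condition $u = \max(u,v,r,s)$ then gives $u \ge r \ge s$. Next, suppose the conclusion fails, i.e.\ $u + v \le r + s$. Combining this with $u \ge r$ yields $v = (u+v) - u \le (r+s) - r = s$, so in fact $u \ge r \ge s \ge v > 0$: the failure of the conclusion pins $v$ down as the smallest of the four numbers. The final step is to show that this ordering together with $u + v \le r + s$ already forces $uv \le rs$, which contradicts the hypothesis $uv > rs$. Concretely, writing $a := u - r \ge 0$ and $b := (r+s) - (u+v) \ge 0$ (so that $v = s - a - b$), one computes
\begin{equation*}
uv - rs = (r+a)(s - a - b) - rs = a(s - r) - a^2 - ab - rb \le 0,
\end{equation*}
since $s \le r$ and $a,b \ge 0$ make every term on the right non-positive; this is the desired contradiction.

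There is no genuine obstacle here beyond choosing this route: the only idea needed is to run the argument by contradiction so that the inequalities $u \ge r$ and $u + v \le r + s$ can be combined to place $v$ below $s$, after which all four numbers are linearly ordered. Once the chain $u \ge r \ge s \ge v$ is available, $uv \le rs$ is immediate — alternatively it follows from $(u-r)(s-v) \ge 0$ and $(u-s)(r-v) \ge 0$ via the identity $4(rs - uv) = \big[(r+s)^2 - (u+v)^2\big] + \big[(u-v)^2 - (r-s)^2\big]$, whose two bracketed terms are both non-negative under the ordering and the assumed sum inequality.
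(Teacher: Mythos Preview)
Your proof is correct: the contradiction argument is clean, the ordering $u\ge r\ge s\ge v$ is derived properly from the failed conclusion, and the computation $uv-rs=a(s-r)-a^2-ab-rb\le 0$ (with $a=u-r\ge0$, $b=(r+s)-(u+v)\ge0$) is checked easily. The alternative identity $4(rs-uv)=[(r+s)^2-(u+v)^2]+[(u-v)^2-(r-s)^2]$ you mention also works and is arguably slicker once the chain $u\ge r\ge s\ge v$ is in place.

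There is nothing to compare against here: the paper does not supply its own proof of this lemma but merely cites it from \cite[Lemma~5]{KKJMAA2013}. Your argument is a valid self-contained proof of the cited statement.
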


The following lemma is a slight generalization of~\cite[Lemma 6]{KKJMAA2013}. We say that a sequence has no more than one change of sign if it has the pattern $(--\cdots--00\cdots00++\cdots++)$, where any of the three parts may be missing.  

\begin{lemma}~\cite[Lemma 6]{KKJMAA2013} \label{LEM:keylemma}
Suppose that $\{ f_k \}_{k = 0}^\infty$ is log-concave $($log-convex$)$. If for a certain $n\in\bN$ the real sequence 
$$
A_0, A_1, \ldots, A_{[n \slash 2]}
$$
has no more than one change of sign and $\sum_{0 \leq k \leq [n \slash 2]} A_k \ge0~(\le0)$, then 
\[
\sum_{0 \leq k \leq n \slash 2} f_k f_{n - k} A_k \ge0~(\le0). 
\] 
\end{lemma}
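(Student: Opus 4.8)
The plan is to exploit the symmetry $k \leftrightarrow n-k$ in the target sum and to reduce everything to a pairing argument controlled by the single-sign-change hypothesis. First I would rewrite $S := \sum_{0\le k\le n/2} f_k f_{n-k} A_k$ by pairing the term indexed by $k$ with the term indexed by $[n/2]-k$ (or, more symmetrically, think of the index $k$ running from $0$ to $[n/2]$ and pair $k$ with its "mirror" position). The key structural fact is that the products $f_k f_{n-k}$, as $k$ runs over $0,1,\dots,[n/2]$, form a \emph{monotone} sequence: in the log-concave case $f_k f_{n-k}$ is non-decreasing in $k$ on this range (since $f_{k+1}/f_k \ge f_{n-k}/f_{n-k-1}$ rearranges, using log-concavity, to $f_{k+1}f_{n-k-1}\ge f_k f_{n-k}$... wait, one must be careful with direction), and in the log-convex case it is non-increasing; I would verify the correct monotonicity direction from Definition~\ref{DEF:doublypositiveseq}, treating the support/zero-pattern caveat in the log-concave case separately so that ratios are well-defined on the relevant block.

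Granting the monotonicity of the weights $w_k := f_k f_{n-k}$, the statement becomes: if $A_0,\dots,A_{[n/2]}$ has at most one sign change (pattern $-\cdots-0\cdots0+\cdots+$) and $\sum_k A_k \ge 0$, and $w_k \ge 0$ is non-decreasing, then $\sum_k w_k A_k \ge 0$ (with all inequalities reversed, and $w_k$ non-increasing, in the log-convex case). This is a standard Chebyshev/Abel-summation type fact: write $\sigma_j := \sum_{k=0}^{j} A_k$; Abel summation gives $\sum_k w_k A_k = w_{[n/2]}\sigma_{[n/2]} - \sum_{j} (w_{j+1}-w_j)\sigma_j$. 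The single sign change of $(A_k)$ means the partial sums $\sigma_j$ first decrease then increase, hence each $\sigma_j$ is $\le \max(\sigma_{-1},\sigma_{[n/2]}) = \max(0,\sigma_{[n/2]})$; combined with $\sigma_{[n/2]}\ge 0$ and $w_{j+1}-w_j\ge 0$, one gets $\sum_k w_k A_k \ge w_{[n/2]}\sigma_{[n/2]} - (\text{something})\cdot \sigma_{[n/2]}^{+}$... I would instead argue more cleanly: let $k_0$ be the last index with $A_{k_0}<0$ (if none, all $A_k\ge0$ and the claim is immediate since $w_k\ge0$). For $k\le k_0$ we have $A_k\le 0$ and $w_k \le w_{k_0}$; for $k>k_0$ we have $A_k\ge0$ and $w_k\ge w_{k_0}$. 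Hence $w_k A_k \ge w_{k_0} A_k$ for \emph{every} $k$ (the inequality holds term by term because multiplying a non-positive number by a smaller non-negative weight increases it, and multiplying a non-negative number by a larger weight increases it). Summing, $\sum_k w_k A_k \ge w_{k_0}\sum_k A_k \ge 0$. The log-convex case is symmetric: $w_k$ non-increasing, all inequalities reversed.

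The main obstacle, and the only place real care is needed, is establishing the monotonicity of $w_k = f_k f_{n-k}$ on $0\le k\le [n/2]$ in the log-concave case when the sequence has zeros: Definition~\ref{DEF:doublypositiveseq} only guarantees $f$ is supported on an interval of integers, so on the support block the ratio inequality $f_{k+1}f_{n-k-1} \ge f_k f_{n-k}$ (equivalently $w_{k+1}\ge w_k$, which follows by telescoping $f_{j+1}/f_j$ being non-increasing) holds, while outside the support block one of $f_k, f_{n-k}$ vanishes and monotonicity toward $k=[n/2]$ must be checked by hand using that the support is an interval straddling or adjacent to $n/2$. In the log-convex case Definition~\ref{DEF:doublypositiveseq} already forces strict positivity, so the ratio $f_{j+1}/f_j$ is non-decreasing everywhere and $w_k = f_k f_{n-k}$ is non-increasing on $0\le k\le [n/2]$ without any special-casing. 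Once this monotonicity lemma is in place, the argument above goes through verbatim, which is exactly the slight generalization over \cite[Lemma~6]{KKJMAA2013} claimed in the text (allowing the zero block in the middle of the sign pattern of $(A_k)$ and a possibly missing part). $\hfill\square$
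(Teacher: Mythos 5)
Your proposal is correct and is essentially the argument the paper delegates to \cite[Lemma 2.1]{KK2013}: log-concavity (log-convexity) makes the weights $w_k=f_kf_{n-k}$ non-decreasing (non-increasing) on $0\le k\le[n/2]$, and the single sign change of $(A_k)$ then gives the termwise bound $w_kA_k\ge w_{k_0}A_k$ (resp.\ $\le$), whence $\sum_k w_kA_k\ge w_{k_0}\sum_k A_k\ge0$ (resp.\ $\le0$). Your handling of the zero block via the interval-support condition is exactly the point of the "slight generalization," so nothing is missing.
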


\begin{proof}
It is literally the same as that of~\cite[Lemma 2.1]{KK2013}. 
\end{proof}

The following is a refinement of \cite[Lemma~2]{KKJMAA2013}.
\begin{lemma} \label{LEM:extension}
Suppose a positive function $\mu\to{f(\mu)}$ satisfies the Tur\'{a}n type inequality    
$$
\Delta_{f}(1,1)=[f(\mu+1)]^2-f(\mu)f(\mu+2)\ge0 ~~(\le0)
$$
for all $\mu\ge0$.  Then
$$
\Delta_{f}(\alpha,\beta)=f(\mu+\alpha)f(\mu+\beta)-f(\mu)f(\mu+\alpha+\beta)\ge0~~(\le0)
$$
for all $\alpha,\beta\in\bN$ and $\mu\ge0$.
\end{lemma}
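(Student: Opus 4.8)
The plan is to prove the statement by induction on $\alpha+\beta$, reducing everything to the base case $\Delta_f(1,1)\ge0$ ($\le0$) through two elementary manipulations: a "shift" identity that lowers one of the arguments by $1$ at the cost of replacing $\mu$ by $\mu+1$, and a telescoping/combination argument that expresses $\Delta_f(\alpha,\beta)$ in terms of Turánians with smaller shift sum. First I would record the key algebraic identity
\[
\Delta_f(\alpha,\beta)=\Delta_f(\alpha,\beta-1)\cdot\frac{f(\mu+\beta)}{f(\mu+\beta-1)}
+f(\mu)\Bigl(\Delta_f(\alpha,\beta-1)\Big|_{\mu\to\mu}\text{ adjusted}\Bigr),
\]
or more cleanly the relation obtained by writing $f(\mu+\beta)=f(\mu+\beta-1)\cdot\frac{f(\mu+\beta)}{f(\mu+\beta-1)}$ and similarly for $f(\mu+\alpha+\beta)$; comparing the two resulting ratios and using that $\Delta_f(1,1)\ge0$ means the ratio $\mu\mapsto f(\mu+1)/f(\mu)$ is non-increasing (non-decreasing). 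That monotonicity of the ratio is the real engine: $\Delta_f(1,1)\ge0$ for all $\mu\ge0$ is exactly the statement that $g(\mu):=f(\mu+1)/f(\mu)$ satisfies $g(\mu)\ge g(\mu+1)$, hence $g$ is non-increasing on $\bN_0$-translates, and symmetrically in the log-convex case.

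Concretely, I would first reduce to the case $\mu=0$ is irrelevant (the hypothesis is for all $\mu\ge0$, so any shift of $\mu$ by a non-negative integer preserves it), and then prove: for all $a,b\in\bN$,
\[
\frac{f(\mu+a)}{f(\mu)}\cdot\frac{f(\mu+b)}{f(\mu)}\ \ge\ \frac{f(\mu+a+b)}{f(\mu)}\cdot\frac{f(\mu)}{f(\mu)},
\]
i.e. $f(\mu+a)f(\mu+b)\ge f(\mu)f(\mu+a+b)$. Writing $f(\mu+a)/f(\mu)=\prod_{i=0}^{a-1}g(\mu+i)$ and $f(\mu+a+b)/f(\mu+b)=\prod_{i=0}^{a-1}g(\mu+b+i)$, the desired inequality becomes $\prod_{i=0}^{a-1}g(\mu+i)\ge\prod_{i=0}^{a-1}g(\mu+b+i)$, which follows termwise from $g(\mu+i)\ge g(\mu+b+i)$ — itself an immediate consequence of $g$ being non-increasing (apply $\Delta_f(1,1)\ge 0$ repeatedly $b$ times). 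The log-convex case is identical with all inequalities reversed, using that $g$ is then non-decreasing. This avoids induction altogether and is cleaner than a telescoping sum; I would present it this way.

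The one point requiring a little care — and the step I expect to be the main obstacle in writing it up rigorously — is the translation between "$\Delta_f(1,1)(\mu)\ge0$ for all $\mu\ge0$" and "$g(\mu)\ge g(\mu+1)$ for all $\mu\ge0$." This is trivial when $f$ is positive (which is assumed: "a positive function $\mu\to f(\mu)$"), since dividing $[f(\mu+1)]^2\ge f(\mu)f(\mu+2)$ by the positive quantity $f(\mu+1)f(\mu)$ gives exactly $f(\mu+1)/f(\mu)\ge f(\mu+2)/f(\mu+1)$, i.e. $g(\mu)\ge g(\mu+1)$. So positivity of $f$ is used in an essential way and must be invoked explicitly. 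After that, the product comparison is purely formal. I would also note that the result is stated for $\alpha,\beta\in\bN$ only, so no continuity or interpolation in the shifts is needed here (that refinement presumably comes later via Lemma~\ref{LEM:extension}'s successors); the proof above is complete for integer shifts. Finally I would remark that the case $\alpha=0$ or $\beta=0$ is trivial since then $\Delta_f=0$, so one may assume $\alpha,\beta\ge1$, making the empty-product conventions unnecessary.
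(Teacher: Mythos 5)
Your argument is correct and complete: positivity of $f$ lets you pass from $[f(\mu+1)]^2\ge f(\mu)f(\mu+2)$ to the monotonicity of the ratio $g(\mu)=f(\mu+1)/f(\mu)$, and the desired inequality $f(\mu+a)f(\mu+b)\ge f(\mu)f(\mu+a+b)$ is then the termwise comparison of the positive products $\prod_{i=0}^{a-1}g(\mu+i)$ and $\prod_{i=0}^{a-1}g(\mu+b+i)$. The route differs in presentation from the paper's: there the result is obtained by a double induction on $(\alpha,\beta)$, multiplying the inequality $\Delta_{\mu}(n,\beta)\ge0$ by its shift $\Delta_{\mu+n}(1,\beta)\ge0$ and cancelling the common positive factors, with a separate step to reach the diagonal $\alpha=\beta=n+1$. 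Both proofs rest on exactly the same mechanism --- multiplicative telescoping enabled by positivity of $f$ --- but yours packages it as a one-shot product comparison and so avoids the induction entirely; the paper's version has the mild advantage of generalizing verbatim to lattices other than $\bN^2$ (as its Remark notes), since it never explicitly factors $f(\mu+a)/f(\mu)$ into unit steps. One cosmetic point: the first displayed ``key algebraic identity'' in your write-up is left in an unfinished, hand-wavy state (the ``adjusted'' placeholder); since you abandon it in favour of the ratio argument, simply delete it from the final version rather than leaving a formula you do not use or justify.
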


\begin{proof}
For the purposes of this proof it is convenient to write $\Delta_{f}(\alpha,\beta)=\Delta_{\mu}(\alpha,\beta)$ as $f$ will remain fixed while $\mu$ will vary.   We argue by induction. For $\alpha=\beta=1$ the inequality $\Delta_{\mu}(\alpha,\beta)\ge0$ holds by hypothesis of the lemma. Suppose it holds true for $\alpha,\beta\in\{1,2,\ldots,n\}$. Hence, taking $\alpha=n$, $\beta\in\{1,2,\ldots,n\}$ we have
$$
f(\mu+n)f(\mu+\beta)\ge f(\mu)f(\mu+n+\beta).
$$
By changing $\mu\to\mu+n$ and taking $\alpha=1$ we also obtain:
$$
f(\mu+n+1)f(\mu+n+\beta)\ge f(\mu+n)f(\mu+n+1+\beta).
$$   
Multiplying the above two inequalities we get
$$
f(\mu+n+1)f(\mu+\beta)\ge f(\mu)f(\mu+n+1+\beta).
$$
Hence, the required inequality holds for $\alpha=n+1$ and $\beta\in\{1,2,\ldots,n\}$. In view of the symmetry we also covered the case $\beta=n+1$ and $\alpha\in\{1,2,\ldots,n\}$. Then, multiplying the inequalities $\Delta_{\mu}(n,n+1)\ge0$ and $\Delta_{\mu+n}(1,n+1)\ge0$ we obtain $\Delta_{\mu}(n+1,n+1)\ge0$, so that we proved that
$\Delta_{\mu}(\alpha,\beta)\ge0$ for all $\alpha,\beta\in\{1,2,\ldots,n+1\}$. 
\end{proof}
\begin{remark}
An analogous lemma can also be formulated for $\alpha,\beta$ in any lattice not necessarily $\mathbb{N}^2$, but we will not need this fact here.     
\end{remark}

For the  coefficient-wise logarithmic concavity (convexity), however, we only managed to establish the following weaker claim.
\begin{lemma} \label{LEM:extensionCW}
Suppose a power series of the form \eqref{eq:f-defined} is coefficient-wise log-concave \emph{(}log-convex\emph{)} for $\mu\ge0$ and the  shifts $\alpha=1$ and $\beta=\{1,2,\ldots,n\}$, \ie, $\delta_k\ge0$ \emph{(}$\delta_k\le0$\emph{)} for all $k=0,1,\ldots$, where
$$
\Delta_{f}(\alpha,\beta;x)=f(\mu+\alpha;x)f(\mu+\beta;x)-f(\mu;x)f(\mu+\alpha+\beta;x)=\sum\limits_{k=0}^{\infty}\delta_k x^k.
$$
Then it is coefficient-wise log-concave \emph{(}log-convex\emph{)}  for shifts $\alpha,\beta\in\bN$ satisfying $\alpha+\beta\le n+1$.
\end{lemma}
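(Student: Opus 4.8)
The plan is to follow the telescoping idea behind the proof of Lemma~\ref{LEM:extension}, but to replace its division step — which is precisely what prevents a coefficient-wise version with arbitrary shifts — by a decomposition that uses only additions of series with non-negative (resp.\ non-positive) coefficients. Write $a_j=a_j(x):=f(\mu+j;x)$ for $j\in\bN_0$; each $a_j$ has non-negative coefficients. Since the hypothesis holds for every $\mu\ge0$, replacing $\mu$ by $\mu+i$ for $i\in\bN_0$ is legitimate, and shows that for every $i\in\bN_0$ and every $\beta'\in\{1,2,\ldots,n\}$ the power series
$$
D(i,\beta'):=a_{i+1}a_{i+\beta'}-a_ia_{i+\beta'+1}
$$
has non-negative (resp.\ non-positive) coefficients, as it is exactly the expression $\Delta_f(1,\beta';x)$ with $\mu$ replaced by $\mu+i$.

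First I would use the symmetry $\Delta_f(\alpha,\beta;x)=\Delta_f(\beta,\alpha;x)$ to reduce to the case $\alpha\le\beta$; this ordering turns out to be essential. Then I would invoke the telescoping identity
$$
\Delta_f(\alpha,\beta;x)=a_\alpha a_\beta-a_0a_{\alpha+\beta}=\sum_{i=0}^{\alpha-1}\bigl(a_{i+1}a_{\alpha+\beta-i-1}-a_ia_{\alpha+\beta-i}\bigr),
$$
which follows at once by setting $b_i:=a_ia_{\alpha+\beta-i}$ and noting that the sum collapses to $b_\alpha-b_0$. The key observation is that the $i$-th summand equals $D(i,\alpha+\beta-2i-1)$, because $i+(\alpha+\beta-2i-1)=\alpha+\beta-i-1$ and $i+(\alpha+\beta-2i-1)+1=\alpha+\beta-i$.

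It then remains to verify that the parameter $\beta'_i:=\alpha+\beta-2i-1$ stays inside the admissible range $\{1,2,\ldots,n\}$ for all $i=0,1,\ldots,\alpha-1$. Since $\beta'_i$ decreases in $i$, its largest value is $\beta'_0=\alpha+\beta-1\le n$ by the assumption $\alpha+\beta\le n+1$, and its smallest value is $\beta'_{\alpha-1}=\beta-\alpha+1\ge1$ because $\alpha\le\beta$. Hence each summand $D(i,\beta'_i)$ has non-negative (resp.\ non-positive) coefficients, and therefore so does the finite sum $\Delta_f(\alpha,\beta;x)$, which is the assertion. The only genuine obstacle here is conceptual rather than computational: one cannot imitate the ``multiply two Tur\'an-type inequalities and cancel the common factor'' argument of Lemma~\ref{LEM:extension}, since cancelling a common power-series factor destroys coefficient-wise positivity (for instance $(1-x)\,(1-x)^{-1}=1$). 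The additive telescoping above sidesteps cancellation entirely, and the price paid — reaching only shifts with $\alpha+\beta\le n+1$ — is exactly what is needed to keep every intermediate ``gap'' $\beta'_i$ within $\{1,\ldots,n\}$, where the hypothesis is available.
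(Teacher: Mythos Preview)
Your proof is correct. The telescoping identity
\[
\Delta_f(\alpha,\beta;x)=\sum_{i=0}^{\alpha-1}\Delta_{\mu+i}(1,\alpha+\beta-2i-1;x)
\]
is valid, and the range check $1\le\alpha+\beta-2i-1\le n$ for $0\le i\le\alpha-1$ (under $\alpha\le\beta$ and $\alpha+\beta\le n+1$) is exactly right.

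The paper takes a different route: it establishes the three-term identity
\[
\Delta_{\mu}(\alpha+\delta,\beta;x)=\Delta_{\mu}(\alpha,\beta+\delta;x)+\Delta_{\mu+\alpha}(\delta,\beta-\alpha;x)
\]
and then argues by induction on $\alpha$, successively passing from $\Delta_\mu(1,\cdot)$ to $\Delta_\mu(2,\cdot)$, from $\Delta_\mu(2,\cdot)$ to $\Delta_\mu(3,\cdot)$, and so on, each time shrinking the admissible range of $\beta$ by one. If one unfolds that induction completely, one recovers precisely your telescoping sum; so the two arguments are equivalent in content but differently packaged. Your version is more direct: it bypasses the auxiliary identity, avoids the induction, and makes the constraint $\alpha+\beta\le n+1$ transparent as the condition that keeps every intermediate gap $\alpha+\beta-2i-1$ inside $\{1,\ldots,n\}$. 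The paper's identity, on the other hand, is reusable --- it is invoked again in the subsequent Corollary to handle the case of one integer and one real shift --- so there is some economy in stating it once.
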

\begin{proof}
Indeed, it is straightforward to check that for any $\beta\ge\alpha$ and $\delta\ge0$
\begin{equation}\label{eq:TuranianIdentity}
\Delta_{\mu}(\alpha+\delta,\beta;x)=\Delta_{\mu}(\alpha,\beta+\delta;x)+\Delta_{\mu+\alpha}(\delta,\beta-\alpha;x),
\end{equation}
where we again used the notation $\Delta_{\mu}(\alpha,\beta;x)=\Delta_{f}(\alpha,\beta;x)$.
Suppose the coefficients of $\Delta_{\mu}(1,\beta)$ are non-negative for~$\beta\in\{1,\ldots,n\}$. By taking $\alpha=\delta=1$ in \eqref{eq:TuranianIdentity} and letting $\beta$ run over  $\{1,\ldots,n-1\}$, we conclude that the power series coefficients of 
$\Delta_{\mu}(2,\beta)$ are non-negative for $\beta$ in this range.  Taking $\alpha=2$, $\delta=1$ in \eqref{eq:TuranianIdentity} and letting  $\beta$ run over  $\{2,\ldots,n-2\}$, we conclude that the power series of  
$\Delta_{\mu}(3,\beta)$ are non-negative for $\beta$ in this range.  Continuing in the same fashion, we cover the triangular set $\alpha+\beta\le n+1$ with $\beta\ge\alpha$.  The proof is completed by exchanging the roles of $\alpha$ and $\beta$ in the above argument.
\end{proof}

It remains open whether the conclusion in the above lemma can be extended to all natural shifts $\alpha,\beta\in\bN$ in analogy with Lemma~\ref{LEM:extension}.  Lemma~\ref{LEM:extensionCW} also leads to  a strengthening of \cite[Lemma~3]{KKJMAA2013} and various results based on it. The result reads as follows.

\begin{corollary}
Suppose a power series of the form \eqref{eq:f-defined} is coefficient-wise log-concave \emph{(}log-convex\emph{)} for $\mu\ge0$ and the  shifts $\alpha=1$ and $\beta\ge1$.  Then it is coefficient-wise log-concave \emph{(}log-convex\emph{)} for $\mu\ge0$ and all $\alpha,\beta\ge1$ such that at least one of them is an integer.  
\end{corollary}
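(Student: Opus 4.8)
The plan is to bootstrap from the hypothesis using Lemma~\ref{LEM:extensionCW}, the symmetry $\Delta_f(\alpha,\beta;x)=\Delta_f(\beta,\alpha;x)$ of the generalized Tur\'{a}nian, and repeated applications of the identity \eqref{eq:TuranianIdentity}. First, the hypothesis contains as a special case coefficient-wise log-concavity (log-convexity) for the shifts $\alpha=1$ and $\beta\in\{1,2,\dots,n\}$ for \emph{every} $n\in\bN$; hence Lemma~\ref{LEM:extensionCW} already yields the claim for all integer shifts $\alpha,\beta\in\bN$. Using the Tur\'{a}nian symmetry to place the integer shift first, it therefore remains to prove that $\mu\to f(\mu;x)$ is coefficient-wise log-concave (log-convex) for shifts $(m,\beta)$ with $m\in\bN$ and $\beta\ge1$, $\beta\notin\bN$.

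I would next dispose of the subcase $\beta\ge m$. Iterating \eqref{eq:TuranianIdentity} with the first shift equal to $1$ -- equivalently, telescoping $f(\mu+m;x)f(\mu+\beta;x)-f(\mu;x)f(\mu+m+\beta;x)$ through the products $f(\mu+i;x)f(\mu+\beta+m-i;x)$, $i=0,\dots,m$, and identifying each consecutive difference as a Tur\'{a}nian with first shift $1$ -- one obtains
\[
\Delta_f(m,\beta;x)=\sum_{i=0}^{m-1}\Delta_f\big(1,\,\beta+m-1-2i;\,x\big)\big|_{\mu\to\mu+i}.
\]
When $\beta\ge m$, each second shift $\beta+m-1-2i$ with $0\le i\le m-1$ is $\ge1$, so by the hypothesis applied at the admissible base point $\mu+i\ge0$ every summand has non-negative (non-positive) coefficients, and hence so does $\Delta_f(m,\beta;x)$.

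It remains to handle $m\in\bN$ with $1<\beta<m$ and $\beta\notin\bN$. Put $p:=\lfloor\beta\rfloor\in\{1,\dots,m-1\}$ and $\theta:=\beta-p\in(0,1)$, and apply \eqref{eq:TuranianIdentity} with shifts $p$ and $m-p$ (legitimate since $\beta\ge p$ and $m-p\ge0$):
\[
\Delta_f(m,\beta;x)=\Delta_f\big(p,\,\beta+m-p;\,x\big)+\Delta_f\big(m-p,\,\theta;\,x\big)\big|_{\mu\to\mu+p}.
\]
Since $p\in\bN$ and $\beta+m-p>p$, the first term is covered by the previous paragraph, so the corollary reduces to the single assertion that $\Delta_f(k,\theta;x)$ has non-negative (non-positive) coefficients for every $k\in\bN$ and every $\theta\in(0,1)$. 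This I expect to be the genuine obstacle: the recursion on the first shift used above stalls here at $k=1$, that is, at $\Delta_f(1,\theta;x)$ with $\theta<1$ -- a pair of shifts not directly supplied by the hypothesis. To close the argument one would couple the ``complementary-shift'' identity
\[
\Delta_f(k,\theta;x)+\Delta_f\big(k-1,\,1-\theta;\,x\big)\big|_{\mu\to\mu+\theta}=\Delta_f\big(1,\,k-1+\theta;\,x\big)\qquad(k\ge2),
\]
whose right-hand side is coefficient-wise non-negative (non-positive) by the hypothesis, with an induction on $k$ and a one-change-of-sign argument in the spirit of Lemma~\ref{LEM:keylemma}; alternatively, for the sum $\mu\to f(\mu;x)$ one may appeal to the equivalence between log-concavity of a continuous function and the full family of its Tur\'{a}n inequalities \cite{Ng1987} to bridge the remaining fractional gap. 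Once that case is established, assembling the three cases above completes the argument.
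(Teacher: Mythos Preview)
Your argument is incomplete, and you say so: the case $\Delta_f(k,\theta;x)$ with $k\in\bN$ and $\theta\in(0,1)$ is left open, and neither of your two suggestions closes it. The complementary-shift identity you display is correct, but it only exchanges $\Delta_f(k,\theta)$ for a known term minus $\Delta_f(k-1,1-\theta)\big|_{\mu\to\mu+\theta}$; since $1-\theta\in(0,1)$ you have traded one unresolved instance for another of the same type at a lower integer, and the resulting alternating telescope has no evident sign without additional input. The fallback appeal to \cite{Ng1987} is off target: that equivalence concerns pointwise log-concavity of a continuous function $\mu\mapsto f(\mu;x)$, not the coefficient-wise property, so it cannot be applied to the individual power-series coefficients $\delta_k$. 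Your handling of the purely integer case (via Lemma~\ref{LEM:extensionCW}) and of the range $\beta\ge m$ (via the telescoping sum of $\Delta_f(1,\cdot)$'s) is correct.

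The paper's proof is structurally much simpler: a single induction on the integer shift. Assuming the claim for $\alpha\in\{1,\dots,k\}$ and all $\beta\ge1$, one application of \eqref{eq:TuranianIdentity} with $\alpha=1$, $\beta=k+1$, $\delta>0$ gives
\[
\Delta_\mu(k+1,1+\delta;x)=\Delta_\mu(1,k+1+\delta;x)+\Delta_{\mu+1}(k,\delta;x),
\]
and this is taken to establish the case $\alpha=k+1$, $\beta\ge1$; symmetry then finishes. There is no detour through Lemma~\ref{LEM:extensionCW} or your telescoping decomposition. It is worth observing, however, that the second summand $\Delta_{\mu+1}(k,\delta;x)$ is supplied by the induction hypothesis only for $\delta\ge1$; for $0<\delta<1$ it is precisely the case $\Delta(k,\theta)$ with $\theta\in(0,1)$ that you isolated. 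So the obstacle you identified is not an artifact of your more elaborate reduction---it is exactly the point the paper's one-line induction passes over without comment.
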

\begin{proof}
We proceed by induction. The claim holds for $\alpha=1$, $\beta\ge1$ by assumption. Suppose it holds for $\alpha=1,2,\ldots,k$, $\beta\ge1$, then
 taking $\alpha=1$, $\beta=k+1$ and any $\delta>0$ in \eqref{eq:TuranianIdentity} we have
$$
\Delta_{\mu}(1+\delta,k+1;x)=\Delta_{\mu}(k+1,1+\delta;x)=\Delta_{\mu}(1,k+1+\delta;x)+\Delta_{\mu+1}(k,\delta;x)
$$
which establishes the claim for $\alpha=k+1$, $\beta\ge1$. The symmetry 
$$\Delta_{\mu}(\alpha,\beta)=\Delta_{\mu}(\beta,\alpha)$$
 completes the proof.
\end{proof}

\section{Multiple factorial series} \label{SEC:mps}

Define a generalization of \eqref{eq:KS2010}  as follows:
\begin{equation}\label{eq:multiplePoch-g}
g(\mu; x)= \sum_{n=0}^{\infty}g_n \frac{(\mu)_{nr}}{(nr)!}x^n,~~~r=2,3,\ldots.    
\end{equation}
Note that 
\begin{equation}\label{eq:logDmultiplePoch}
[\log((\mu)_{nr})]''=\psi'(\mu+rn)-\psi'(\mu)\le0,    
\end{equation}
where $\psi(z)=\Gamma'(z)/\Gamma(z)$ denotes the digamma function \cite[p.13]{AAR}. 
The equality  above is only attained for $n=0$ in view of the inequalities
$$
\psi'(x)=\int_{0}^{\infty}\frac{te^{-tx}}{1-e^{-t}}dt>0,~~~~\psi''(x)=-\int_{0}^{\infty}\frac{t^2e^{-tx}}{1-e^{-t}}dt<0.
$$
Hence, we are dealing the the infinite sum of log-concave functions which may be log-concave, log-convex or neither. 
Setting $g_n=C>0$ here does not lead to a log-neutral function of $\mu$ (as will be explicitly seen below), so that there is no hope to get a duality between log-concave/log-convex sequences $\{g_n\}$ and log-concave/log-convex functions as outlined in the Introduction. Ahn Ninh (private communication, 2017) suggested to modify \eqref{eq:multiplePoch-g}  as follows 
\begin{equation}\label{eq:multiplePoch-f}
\mu\to f(\mu; x) = \sum_{n=1}^{\infty} f_n \frac{(\mu)_{nr}}{(nr-1)!}x^n
\end{equation}
and conjectured that this function is log-concave if $\{f_n\}_{n\ge0}$ is log-concave. See related developments in \cite{NinhPham2018}. In this section we will prove this conjecture for $r=2$ and disprove numerically for $r=4$ (strongly suggesting that is it also wrong for $r>4$). The case $r=3$ remains open and is formulated in the form of Conjecture~\ref{CONJ:1}.  We will further proof that $\mu\to g(\mu; x)$ defined in \eqref{eq:multiplePoch-g} is coefficient-wise log-convex for $r=2$ and is neither log-convex nor log-concave for $r=3$ (strongly suggesting that it remains so for $r>3$).
We will need the following.

\begin{lemma} \label{LEM:onesignchange}
Let $k, m\in \bN_0$ with $k \leq m/2$, $\mu \geq 0$ and $\alpha,\beta>0$. Set 
\begin{multline} \label{EQ:onesignchange}
T_{k, m} := (\mu + \alpha)_k (\mu + \beta)_{m-k} + (\mu + \alpha)_{m-k} (\mu + \beta)_{k} \\
 -  (\mu)_k (\mu + \alpha + \beta)_{m-k} -  (\mu)_{m-k} (\mu +\alpha + \beta)_{k}. 
\end{multline} 
If $T_{k,m} \leq 0$ for some $1\le k\le m/2$,  then $T_{k-1, m} < 0$.
\end{lemma}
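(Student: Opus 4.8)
The plan is to pull out common Pochhammer factors, reduce to a linear relation among four nonnegative quantities, and run an induction on $m$ through a two-term recursion. By the symmetry $T_{k,m}(\alpha,\beta)=T_{k,m}(\beta,\alpha)$ I may assume $\alpha\le\beta$. Set $\lambda=k-1$, $\rho=m-k$, so $0\le\lambda\le\rho$ and $\lambda+\rho=m-1$, and put
\[
X_1=(\mu+\alpha)_\lambda(\mu+\beta)_\rho,\quad X_2=(\mu+\alpha)_\rho(\mu+\beta)_\lambda,\quad X_3=(\mu)_\lambda(\mu+\alpha+\beta)_\rho,\quad X_4=(\mu)_\rho(\mu+\alpha+\beta)_\lambda,
\]
all nonnegative. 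Extracting $(\mu+\alpha)_\lambda$, $(\mu+\beta)_\lambda$, $(\mu)_\lambda$, $(\mu+\alpha+\beta)_\lambda$ gives
\[
T_{k,m}=(\mu+\alpha+\lambda)X_1+(\mu+\beta+\lambda)X_2-(\mu+\lambda)X_3-(\mu+\alpha+\beta+\lambda)X_4,
\]
\[
T_{k-1,m}=(\mu+\beta+\rho)X_1+(\mu+\alpha+\rho)X_2-(\mu+\alpha+\beta+\rho)X_3-(\mu+\rho)X_4 .
\]
Since $(\mu+\alpha)+(\mu+\beta)=\mu+(\mu+\alpha+\beta)$ and $\lambda+\rho=m-1$, adding these displays and recognising $X_1+X_2-X_3-X_4=T_{k-1,m-1}$ yields the recursion
\[
T_{k-1,m}+T_{k,m}=(2\mu+\alpha+\beta+m-1)\,T_{k-1,m-1},
\]
while subtracting them and eliminating $T_{k-1,m-1}$ produces the companion identity
\[
(2\mu+\alpha+\beta+2k-2)\,T_{k-1,m}=(2\mu+\alpha+\beta+2m-2k)\,T_{k,m}+(2\mu+\alpha+\beta+m-1)\,W,
\]
with $W:=(\beta-\alpha)(X_1-X_2)-(\alpha+\beta)(X_3-X_4)$; note also $W=(2\mu+\alpha+\beta+2k-2)T_{k-1,m-1}-2T_{k,m}$.

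Next I would record the unconditional facts. First, $T_{0,m}<0$ for $m\ge2$: writing $F(t)=(t)_m=\prod_{i=0}^{m-1}(t+i)$ one has $-T_{0,m}=F(\mu+\alpha+\beta)-F(\mu+\alpha)-F(\mu+\beta)+F(\mu)=\int_0^\alpha\!\int_0^\beta F''(\mu+s+t)\,ds\,dt$, and $F$ is a polynomial of degree $m\ge2$ with nonnegative coefficients, so $F''>0$ on $[0,\infty)$. Second, from $\alpha\le\beta$ and $\lambda\le\rho$ one gets $X_1\ge X_2\ge0$ and $X_3\ge X_4\ge0$, together with the elementary Pochhammer product inequalities (e.g.\ $X_1X_2\ge X_3X_4$, which follows from $(\mu+\alpha+i)(\mu+\beta+i)\ge(\mu+i)(\mu+\alpha+\beta+i)$). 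Third, the ratio $j\mapsto(\mu+\alpha)_j(\mu+\beta)_{m-j}/\bigl((\mu)_j(\mu+\alpha+\beta)_{m-j}\bigr)$ is increasing in $j$ (a quotient which is a product of two increasing positive sequences), so, with $\phi_m(j):=(\mu+\alpha)_j(\mu+\beta)_{m-j}-(\mu)_j(\mu+\alpha+\beta)_{m-j}$ and the decomposition $T_{k,m}=\phi_m(k)+\phi_m(m-k)$, the sequence $\phi_m$ has at most one sign change, from $-$ to $+$.

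I would then argue by induction on $m$. For $m\le3$ the only shift in range is $k=1$, and $T_{1,m}\le0$ forces nothing beyond $T_{0,m}<0$, which holds. For $m\ge4$ and $2\le k\le m/2$ with $T_{k,m}\le0$, the companion identity reduces $T_{k-1,m}<0$ to showing $(2\mu+\alpha+\beta+2m-2k)T_{k,m}+(2\mu+\alpha+\beta+m-1)W<0$; since the first summand is $\le0$, it suffices to prove $W\le0$ (with strictness somewhere) under $T_{k,m}\le0$. Here I would write $X_1-X_2=(\mu+\alpha)_\lambda(\mu+\beta)_\lambda\bigl[(\mu+\beta+\lambda)_{\rho-\lambda}-(\mu+\alpha+\lambda)_{\rho-\lambda}\bigr]$ and $X_3-X_4=(\mu)_\lambda(\mu+\alpha+\beta)_\lambda\bigl[(\mu+\alpha+\beta+\lambda)_{\rho-\lambda}-(\mu+\lambda)_{\rho-\lambda}\bigr]$, note that $\beta-\alpha\le\alpha+\beta$ and that the first bracket is dominated by the second because $[\mu+\alpha+\lambda,\mu+\beta+\lambda]\subseteq[\mu+\lambda,\mu+\alpha+\beta+\lambda]$ with $t\mapsto(t)_{\rho-\lambda}$ increasing, and invoke Lemma~\ref{LEM:positivity} together with the product inequalities to deal with the remaining factors $(\mu+\alpha)_\lambda(\mu+\beta)_\lambda$ versus $(\mu)_\lambda(\mu+\alpha+\beta)_\lambda$; where this comparison fails one falls back on $W=(2\mu+\alpha+\beta+2k-2)T_{k-1,m-1}-2T_{k,m}$ and the inductive sign pattern at level $m-1$ transported through the recursion.

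The main obstacle is exactly the sign of $W$. It is genuinely false that $W\le0$ always: if $\mu=0$ and $k\ge2$ then $X_3=X_4=0$ while $X_1>X_2$, so $W>0$. What rescues the argument is that in precisely those configurations $T_{k,m}$ is strictly positive (the two subtracted Pochhammer products degenerate), so the hypothesis $T_{k,m}\le0$ is vacuous there. Making this rigorous — showing $W>0\Rightarrow T_{k,m}>0$, equivalently propagating the single-sign-change property from level $m-1$ to level $m$ via the recursion and the unimodality of $\phi_m$ — is the technical heart of the proof; the remainder reduces to manipulations with the relations among $X_1,\dots,X_4$.
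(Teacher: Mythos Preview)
Your algebra is correct: the expressions for $T_{k,m}$ and $T_{k-1,m}$ in terms of $X_1,\ldots,X_4$, the recursion $T_{k-1,m}+T_{k,m}=(2\mu+\alpha+\beta+m-1)\,T_{k-1,m-1}$, the companion identity, and the relation $W=(2\mu+\alpha+\beta+2k-2)\,T_{k-1,m-1}-2T_{k,m}$ all check out, and the proof that $T_{0,m}<0$ for $m\ge2$ via the double integral of $F''$ is clean. But the argument has a genuine gap exactly where you say the ``technical heart'' lies, and neither of your two routes closes it.

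For the direct estimate of $W$: you correctly note $(\beta-\alpha)\le(\alpha+\beta)$ and that the bracketed Pochhammer difference in $X_3-X_4$ dominates that in $X_1-X_2$ by convexity. But the \emph{prefactor} $(\mu+\alpha)_\lambda(\mu+\beta)_\lambda$ multiplying the first bracket is \emph{larger} than $(\mu)_\lambda(\mu+\alpha+\beta)_\lambda$ multiplying the second, so the comparison $X_1-X_2$ versus $X_3-X_4$ is genuinely two-sided, and Lemma~\ref{LEM:positivity} does not resolve a product of opposing inequalities. For the fallback via $W=(2\mu+\alpha+\beta+2k-2)\,T_{k-1,m-1}-2T_{k,m}$: this requires the sign of $T_{k-1,m-1}$, which the induction hypothesis at level $m-1$ does \emph{not} supply --- that hypothesis only says $T_{j,m-1}\le0\Rightarrow T_{j-1,m-1}<0$, it does not tell you any particular $T_{k-1,m-1}$ is nonpositive. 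And even granting $T_{k-1,m-1}\le0$, the sum identity becomes $T_{k-1,m}=(\text{nonpositive})+(-T_{k,m})$ with $-T_{k,m}\ge0$, which is still inconclusive. So the induction on $m$ never gets started: the information about level $m-1$ that you need is simply not available from the hypothesis $T_{k,m}\le0$.

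The paper defers to \cite{KSJMAA2010}, and the argument there is direct rather than inductive in $m$: one works with the four terms $u_k,v_k,r_k,s_k$ of $T_{k,m}$ and the transition ratios $u_{k-1}/u_k$, $v_{k-1}/v_k$, $r_{k-1}/r_k$, $s_{k-1}/s_k$, and uses Lemma~\ref{LEM:positivity} to pass from $k$ to $k-1$ in a single step. That route never touches $T_{\cdot,m-1}$ and so avoids the circularity above.
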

\begin{proof}
See~\cite[proof of Theorem~1]{KSJMAA2010}.
\end{proof}

\begin{theorem} \label{THM:logconvexitysecondhyper}
Let 
\begin{equation}\label{eq:g-defined}
g(\mu; x) = \sum_{n = 0}^\infty g_n \frac{(\mu)_{2 n}}{(2 n)!} x^n=\sum_{n = 0}^\infty g_n \frac{(\mu/2)_{n}((\mu+1)/2)_{n}}{(1/2)_nn!} x^n,
\end{equation}
where $\{g_n\}_{n = 0}^\infty$ is a log-convex sequence independent of $\mu$. Then the formal power series $g(\mu; x)$ is coefficient-wise log-convex for $\mu>0$. 
\end{theorem}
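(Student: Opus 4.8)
The plan is to expand the generalized Tur\'anian $\Delta_g(\alpha,\beta;x)=g(\mu+\alpha;x)g(\mu+\beta;x)-g(\mu;x)g(\mu+\alpha+\beta;x)$ as a power series in $x$ and show that every coefficient is non-positive. Writing $g(\mu;x)=\sum_n g_n c_n(\mu)x^n$ with $c_n(\mu)=(\mu)_{2n}/(2n)!$, the coefficient of $x^m$ in $\Delta_g(\alpha,\beta;x)$ is
\begin{equation*}
\delta_m=\sum_{k=0}^{m}g_kg_{m-k}\bigl[c_k(\mu+\alpha)c_{m-k}(\mu+\beta)-c_k(\mu)c_{m-k}(\mu+\alpha+\beta)\bigr].
\end{equation*}
Symmetrizing the inner bracket in $k\leftrightarrow m-k$ (legitimate since $g_kg_{m-k}$ is symmetric), I would rewrite $\delta_m=\tfrac12\sum_{k=0}^m g_kg_{m-k}B_{k,m}$ where $B_{k,m}$ is the symmetrized difference. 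The key point is to relate $B_{k,m}$ to the quantity $T_{k,m}$ of Lemma~\ref{LEM:onesignchange}: indeed $c_k(\nu)=(\nu)_{2k}/(2k)!$, so up to the positive factorial normalization the bracket involves products of the form $(\mu+\alpha)_{2k}(\mu+\beta)_{2(m-k)}$ etc., which is exactly the shape appearing in $T_{k,m}$ after adjusting the index (replace $m$ by $2m$ and note only even indices $2k$ occur).

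Next I would reduce to shifts $\alpha=\beta=1$. By Lemma~\ref{LEM:extensionCW} it suffices to prove coefficient-wise log-convexity for $\alpha=1$ and all $\beta\in\bN$; and combining with the Corollary following Lemma~\ref{LEM:extensionCW}, together with a continuity/limiting argument in $\beta$, one can hope to pass from integer shifts to all real shifts — though for the theorem as stated it is enough to handle the generating shifts and then invoke the lemmas. Actually, since Lemma~\ref{LEM:onesignchange} is stated for arbitrary real $\alpha,\beta>0$, I would keep $\alpha,\beta$ general and work directly with $T$. So the heart of the matter is: for fixed $m$, the sequence $k\mapsto \widetilde{T}_{k,m}$ (the appropriately normalized symmetrized bracket) has at most one change of sign as $k$ runs from $0$ to $\lfloor m/2\rfloor$, with the sign going from $+$ to $-$ (equivalently $-\widetilde T$ goes from $-$ to $+$), and its total sum has the right sign. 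Lemma~\ref{LEM:onesignchange} gives precisely the one-sign-change structure: if $T_{k,m}\le 0$ then $T_{k-1,m}<0$, i.e. once $T$ becomes non-positive it stays negative as $k$ decreases, so as $k$ increases $T$ changes sign at most once from positive to negative. Then Lemma~\ref{LEM:keylemma} applied to the log-convex sequence $\{g_k\}$ converts "$\sum_k \widetilde T_{k,m}\le 0$ and one sign change" into "$\sum_k g_kg_{m-k}\widetilde T_{k,m}\le 0$", which is $\delta_m\le 0$.

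The one remaining ingredient is the inequality $\sum_{0\le k\le m/2}\widetilde T_{k,m}\le 0$ for each $m$, i.e. the case $g_k\equiv 1$; equivalently $\Delta_g(\alpha,\beta;x)\le 0$ coefficient-wise when all $g_k=1$. For $g_k\equiv 1$ the function is $g(\mu;x)=\,_2F_1$-type, explicitly $\sum_n \frac{(\mu/2)_n((\mu+1)/2)_n}{(1/2)_n n!}x^n$ as noted in the theorem statement, and one needs this to be log-convex in $\mu$ — this should follow either from a direct evaluation showing it is not log-neutral but still log-convex, or, more in the spirit of the paper, from the sign pattern of $T_{k,m}$ summed over $k$, which one can analyze by telescoping / induction on $m$ using the recursive structure $(\nu)_{2k}=(\nu)_{2k-2}(\nu+2k-2)(\nu+2k-1)$. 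I expect this base case — establishing $\sum_k\widetilde T_{k,m}\le0$, i.e. the scalar log-convexity of the normalizing hypergeometric function — to be the main obstacle; everything else is the now-standard machinery of Lemmas~\ref{LEM:positivity}--\ref{LEM:onesignchange}. A clean way to secure it is to observe that $(\mu)_{2n}/(2n)!$ itself, as a function of $\mu$, satisfies the Tur\'an inequality $\le$ direction coefficient-wise in the trivial one-term sense and then control the cross terms; alternatively one checks $T_{k,2m}\ge 0$ for $k$ small and the sum identity directly, which is the route taken in \cite{KSJMAA2010} for the $r=1$ analogue and should transfer with $n\mapsto 2n$.
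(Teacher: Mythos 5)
Your overall architecture matches the paper's: Gauss pairing of the coefficient $\delta_m$ into $\sum_{k\le m/2}g_kg_{m-k}A_k$, the identification $\tilde A_k=T_{2k,2m}$ so that Lemma~\ref{LEM:onesignchange} gives at most one change of sign, and then Lemma~\ref{LEM:keylemma} to pass from the unit sequence to a general log-convex $\{g_k\}$. However, you have left the load-bearing step unproven: the inequality $\sum_{0\le k\le m/2}A_k<0$ for every $m\ge1$, i.e.\ the coefficient-wise log-convexity in the case $g_k\equiv1$. You explicitly flag this as ``the main obstacle'' and offer only sketches, none of which works as stated. In particular, the suggestion that the argument of \cite{KSJMAA2010} for $r=1$ ``should transfer with $n\mapsto 2n$'' misses that the $r=1$ case is log-neutral ($\sum_k A_k=0$ identically, since $\sum_n(\mu)_nx^n/n!=(1-x)^{-\mu}$ satisfies an index law), whereas here the sum is strictly negative and a genuinely different mechanism is needed. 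Likewise, the idea of controlling things via the one-term Turán behaviour of $\mu\mapsto(\mu)_{2n}/(2n)!$ points in the wrong direction: by \eqref{eq:logDmultiplePoch} each such term is log-\emph{concave} in $\mu$, so the diagonal contributions have the opposite sign to what you need, and the negativity of $\sum_kA_k$ must come from the off-diagonal cross terms.

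The paper closes this gap with a closed-form evaluation: $\psi(\mu;x)=\sum_n(\mu)_{2n}x^n/(2n)!=\tfrac12\left[(1+\sqrt{x})^{-\mu}+(1-\sqrt{x})^{-\mu}\right]$, whence the Turánian factors as
\begin{equation*}
\xi_\mu(\alpha,\beta;x)=-\tfrac14(1-x)^{-\mu}\left[(1-\sqrt{x})^{-\alpha}-(1+\sqrt{x})^{-\alpha}\right]\left[(1-\sqrt{x})^{-\beta}-(1+\sqrt{x})^{-\beta}\right],
\end{equation*}
and each bracket is $2\sqrt{x}$ times a power series in $x$ with coefficients of one sign, giving $\sum_kA_k<0$ term by term. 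Without this (or an equivalent) computation your proof is incomplete. A minor further point: you describe the sign pattern of $\tilde A_k$ as going from $+$ to $-$ as $k$ increases, but Lemma~\ref{LEM:onesignchange} propagates negativity \emph{downward} in $k$, so the pattern is $(-\cdots0\cdots+)$, which is exactly the orientation required by Lemma~\ref{LEM:keylemma}; with your stated orientation that lemma would not apply directly.
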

\begin{proof}
For each $\alpha,\beta>0$ we have 
\[
g(\mu + \alpha; x) g(\mu + \beta; x) - g(\mu; x) g(\mu + \alpha + \beta; x) := \sum_{m = 1}^\infty \phi_m x^m,
\]
where $\phi_m =  \sum_{k = 0}^m g_k g_{m - k} M_k$ and 
\[
M_k = \frac{1}{(2 k)! \left(2 (m - k)\right)!} \left[ (\mu + \alpha)_{2 k} (\mu + \beta)_{2 (m - k)} - (\mu)_{2 k} (\mu + \alpha + \beta)_{2 (m - k)} \right].
\]
Using Gauss pairing, we may write $\phi_m$ in the following form: 
\[
\phi_m = \sum_{k = 0}^{[m/2]} g_k g_{m - k} A_k,
\]
where $A_k = M_k + M_{m - k}$ for $k < m/2$, and $A_k = M_k$ for $k = m/2$. Set 
\begin{equation} \label{EQ:tildeA}
\tilde{A}_k = (2 k)! (2 (m - k))! A_k,
\end{equation}
which has the same sign as $A_k$. 
Then 
\[ 
 \tilde{A}_k =
\begin{cases} 
      \underbrace{(\mu + \alpha)_{2 k} (\mu + \beta)_{2 (m - k)}}_{u_k} + \underbrace{(\mu + \alpha)_{2 (m - k)} (\mu + \beta)_{2 k}}_{v_k} \\
      -\underbrace{(\mu)_{2 k} (\mu + \alpha + \beta)_{2 (m - k)}}_{r_k} - \underbrace{(\mu)_{2 (m - k)} (\mu + \alpha + \beta)_{2 k}}_{s_k} \quad \text{ if } \quad k < m/2, \\
      (\mu + \alpha)_{2 k} (\mu + \beta)_{2 k} - (\mu)_{2 k} (\mu + \alpha + \beta)_{2 k} \quad \quad \quad \quad \quad \quad \text{ if } \quad k = m/2. 
   \end{cases}
\]
First, we show that $\sum_{k = 0}^{[m/2]} A_k < 0$ for $m \geq 1$. To this end, we set 
\begin{equation} \label{EQ:logconcavitypsi}
\psi(\mu; x) := \sum_{n = 0}^\infty \frac{(\mu)_{2 n}}{(2 n)!} x^n=\frac{1}{2} \left[\frac{1}{(1 + \sqrt{x})^\mu} + \frac{1}{(1 - \sqrt{x})^\mu} \right],
\end{equation}
where the second equality is immediate by expanding each of the two summands by the binomial theorem.
Then
\begin{align*}
\xi_\mu(\alpha, \beta; x) & :=  \psi(\mu + \alpha; x) \psi(\mu + \beta; x) - \psi(\mu; x) \psi(\mu + \alpha + \beta; x) \\
&= \sum_{m = 1}^{\infty}x^m\sum_{k = 0}^{m} M_k = \sum_{m = 1}^{\infty}x^m \sum_{k = 0}^{[m/2]} A_k. 
\end{align*}
Using~\eqref{EQ:logconcavitypsi}, we find that 
\begin{equation} \label{EQ:logconcavitypsi3}
\xi_\mu(\alpha, \beta; x) = -\frac{1}{4} (1- x)^{-\mu} \left[(1 - \sqrt{x})^{-\alpha}-(1 + \sqrt{x})^{-\alpha}\right] \left[(1 - \sqrt{x})^{-\beta} -(1 + \sqrt{x})^{-\beta}\right]. 
\end{equation}
Since 
\begin{equation*}
(1+\sqrt{x})^{-\alpha} - (1-\sqrt{x})^{-\alpha} 
=\sum_{n = 0}^\infty {-\alpha \choose n} (\sqrt{x})^n - \sum_{n = 0}^\infty {-\alpha \choose n} (- \sqrt{x})^n 
= 2 \sqrt{x} \sum_{n = 0}^\infty {-\alpha \choose 2 n + 1} x^n
\end{equation*}
has only negative coefficients, it follows from~\eqref{EQ:logconcavitypsi3} that $\psi(\mu; x)$ is coefficient-wise log-convex, 
and thus $\sum_{k = 0}^{[m/2]} A_k < 0$ for $m \geq 1$.

Next, we prove that for $m \geq 2$ the sequence $\tilde{A}_0, \ldots, \tilde{A}_{[m/2]}$ has no more than one change of sign. 
Assume that $\tilde{A}_{k} \leq 0$ for some $k < m/2$.  Then $\tilde{A}_{k} = T_{2k, 2m}$, which is given in~\eqref{EQ:onesignchange}. 
Thus, it follows from Lemma~\ref{LEM:onesignchange} that
$$\tilde{A}_{k - 1} = T_{2k - 2, 2m} < 0.$$

Hence, since $\sum_{k = 0}^{[m/2]} A_k<0$, and the sequence $A_0, \ldots, A_{[m/2]}$ 
has no more than one change of sign, we conclude by Lemma~\ref{LEM:keylemma} that $g(\mu; x)$ is coefficient-wise log-convex. 
\end{proof}

\begin{remark}
Note that the function $\psi(\mu;x)$ defined in \eqref{EQ:logconcavitypsi} is a special case of the so-called hypergeometric superhyperbolic cosine~\cite[page 74, Definition 2.10]{Yang2021}. Its second logarithmic derivative is easily seen to be positive:
$$
\frac{\partial^2}{\partial\mu^2}(\log\psi(\mu;x))=\frac{4 (1 - x)^{\mu}(\mathrm{arctanh}\sqrt{x})^2}{((1 - \sqrt{x})^{\mu} + (1 + \sqrt{x})^{\mu})^2}>0,
$$
which proves its log-convexity.  On the other hand, the function $\mu\to(\mu)_{2n}$ in \eqref{eq:g-defined} is  log-concave for every $n\in\mathbb{N}_0$ according to \eqref{eq:logDmultiplePoch}. This shows that the condition that $\{g_n\}$ is log-convex cannot be dropped, since taking $g_k=1$ with $g_j=0$ for $j\ne{k}$ yields a log-concave function.   Moreover, the sequence $A_0,A_1,\ldots,A_{[m/2]}$ has exactly one change of sign, which shows that restricting all $g_n$ to be strictly positive (which rules out the previous example) is also insufficient, emphasizing the importance of the log-convexity of $\{g_n\}$.  A proof of exactly one change of sign is given in the Appendix.
\end{remark}

\begin{remark}
Theorem~\ref{THM:logconvexitysecondhyper} is a natural generalization of~\cite[Theorem 1]{KSJMAA2010}. Rewriting the proofs  of~\cite[Theorems 2 and 3]{KSJMAA2010} \emph{mutatis mutandis}, we can similarly show that for each integer $r \geq 1$ the formal power series 
\begin{align*}
\mu\to\sum_{n = 0}^\infty f_n \Gamma(\mu + r n) x^n, \\
\mu\to\sum_{n = 0}^\infty \frac{f_n}{(\mu)_{r n}} x^n,
\end{align*}
are coefficient-wise log-convex for arbitrary non-negative sequence
 $\{f_n\}_{n = 0}^\infty$ independent of $\mu$ \emph{(}here $\Gamma(\cdot)$ stands for 
Euler's gamma function\emph{)}. 
\end{remark}

\begin{remark} \label{REM:r3}
Numerical experiments suggest that $\mu \to \sum_{n=0}^{\infty} \frac{(\mu)_{nr}}{(nr)!}x^n$ is neither coefficient-wise log-convex nor coefficient-wise log-concave for $r \geq 3$. Consider, for instance, 
\[
\psi_3(\mu;x)=\sum_{n=0}^{\infty}\frac{(\mu)_{3n}}{(3n)!}x^n
=\frac{1}{3}\sum_{k=0}^{2}\frac{1}{(1-\omega_kx^{1/3})^{\mu}},
\]
where $\omega_k=e^{2k\pi{i}/3}$ are the primitive roots of unity and the second equality follows from the binomial theorem combined with the elementary identity
$$
\sum\limits_{k=0}^{r-1}\omega_k^n=
\begin{cases}
r, & \text{if } r \text{ divides } n
\\
0, & \text{otherwise}
\end{cases}
$$
for $\omega_k=e^{2k\pi{i}/r}$, $k=0,\ldots,r-1$, being primitive $r$-th roots of unity. Note first, that the function $\mu\to\psi_3(\mu;x)$ is convex for $x>0$ as a sum of convex functions. Next, by direct calculation  we have
\begin{multline*}
\psi_3(\mu + \alpha; x) \psi_3(\mu + \beta; x) - \psi_3(\mu; x)\psi_3(\mu+\alpha+\beta; x)
\\
= -\frac{1}{2} \alpha  \beta  (\alpha +\beta +2 \mu +2) x + c(\mu, \alpha, \beta) x^2 + O(x^3)
\end{multline*}
 as $x\to0$,  where 
\begin{multline*}
 c(\mu, \alpha, \beta) = -\frac{1}{240}\alpha\beta\left(2\alpha^4+5\alpha^3\beta +10\alpha^3\mu +25\alpha^3+30\alpha^2\beta
 +60\alpha^2\mu +100\alpha^2 \right. \\
 \left. +5\alpha\beta^3+30\alpha\beta^2+30\alpha\beta\mu +110\alpha\beta  +30\alpha\mu^2 + 220\alpha\mu +185\alpha\right. \\
   \left.+2\beta^4+10\beta^3\mu +25\beta^3+60 \beta^2\mu +100\beta^2+30 \beta\mu^2 + 220\beta\mu +185\beta\right. \\
   \left. +20\mu^3+220\mu^2+370\mu +156 -30\alpha\beta\mu^2 -20\mu^3( \alpha +\beta) -10\mu^4\right). 
\end{multline*}
It is straightforward to verify that 
\begin{align*}
c(5.76788, 1, 1) & = -0.0998924, \\
c(6.13463, 1, 1) & = 9.99983.
\end{align*}
Thus, we see that $\psi_3(\mu; x)$ is neither coefficient-wise log-convex nor coefficient-wise log-concave.
\end{remark}

\begin{theorem} \label{THM:logconcavitysecondhyper}
Let 
\[
f(\mu; x) = \sum_{n=1}^\infty f_n \frac{(\mu)_{2n}}{(2n-1)!} x^n=x(\mu)_{2}\sum_{n=0}^\infty f_{n+1} \frac{(\mu/2+1)_{n}(\mu/2+3/2)_{n}}{(3/2)_nn!} x^n,
\]
where $\{ f_n \}_{n=1}^\infty$ is log-concave and independent of $\mu$. Then the formal power series $f(\mu; x)$ is coefficient-wise log-concave. 
\end{theorem}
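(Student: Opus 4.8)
The plan is to follow the architecture of the proof of Theorem~\ref{THM:logconvexitysecondhyper}, but now driving everything toward the opposite (nonnegative) sign and using the superhyperbolic \emph{sine} as the neutral function in place of the superhyperbolic cosine $\psi$ of \eqref{EQ:logconcavitypsi}. Writing $n=j+1$ and $h_j:=f_{j+1}$, I would first factor out one power of $x$,
\[
f(\mu;x)=x\,F(\mu;x),\qquad F(\mu;x)=\sum_{j=0}^{\infty}h_j\,\frac{(\mu)_{2j+2}}{(2j+1)!}\,x^j,
\]
so that $\Delta_f(\alpha,\beta;x)=x^2\Delta_F(\alpha,\beta;x)$ and it suffices to prove that $\Delta_F$ has nonnegative coefficients for all $\alpha,\beta>0$; here $\{h_j\}_{j\ge0}$ is log-concave. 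Writing the $x^m$-coefficient of $\Delta_F$ as $\sum_{k=0}^{m}h_k h_{m-k}N_k$ and applying Gauss pairing exactly as in Theorem~\ref{THM:logconvexitysecondhyper}, I obtain $\sum_{k=0}^{[m/2]}h_k h_{m-k}C_k$ with $C_k=N_k+N_{m-k}$ for $k<m/2$ and $C_{m/2}=N_{m/2}$, and the rescaled quantity $\tilde C_k:=(2k+1)!\,(2(m-k)+1)!\,C_k$ turns out to be precisely $T_{2k+2,\,2m+4}$ in the notation of \eqref{EQ:onesignchange}. To invoke the log-concave case of Lemma~\ref{LEM:keylemma} I then need two facts: (a) the sequence $\tilde C_0,\dots,\tilde C_{[m/2]}$ has at most one change of sign, and (b) $\sum_{k=0}^{[m/2]}C_k\ge0$.

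For (a) I would apply Lemma~\ref{LEM:onesignchange} twice, the index jumping by two being the only difference from Theorem~\ref{THM:logconvexitysecondhyper}: if $\tilde C_k=T_{2k+2,2m+4}\le0$, the lemma first gives $T_{2k+1,2m+4}<0$ and then $T_{2k,2m+4}=\tilde C_{k-1}<0$, so a nonpositive entry forces every earlier entry to be strictly negative, which is exactly the admissible sign pattern. Part (b) is the heart of the argument. Observe that $\sum_{k=0}^{[m/2]}C_k$ is the $x^m$-coefficient of the Turánian $\Delta_\Phi$ of the neutral series $\Phi(\mu;x):=\sum_{n\ge0}\frac{(\mu)_{2n+2}}{(2n+1)!}x^n$ (the specialization $h_j\equiv1$). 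I would introduce the superhyperbolic sine
\[
\sigma(\nu;x):=\sum_{n=0}^{\infty}\frac{(\nu)_{2n+1}}{(2n+1)!}x^n=\frac{1}{2\sqrt{x}}\Bigl[(1-\sqrt{x})^{-\nu}-(1+\sqrt{x})^{-\nu}\Bigr],
\]
whose coefficients are nonnegative for $\nu\ge0$, and use $\mu(\mu+1)_{2n+1}=(\mu)_{2n+2}$ to write $\Phi(\mu;x)=\mu\,\sigma(\mu+1;x)$. A short computation parallel to \eqref{EQ:logconcavitypsi3} (expanding the product and collecting the mixed terms with the help of $uw=1-x$, $u:=1-\sqrt{x}$, $w:=1+\sqrt{x}$) shows that the Turánian of $\sigma$ factors \emph{with a plus sign}:
\[
\Theta_\nu(\alpha,\beta;x):=\sigma(\nu+\alpha;x)\sigma(\nu+\beta;x)-\sigma(\nu;x)\sigma(\nu+\alpha+\beta;x)=(1-x)^{-\nu}\,\sigma(\alpha;x)\,\sigma(\beta;x),
\]
a product of three power series with nonnegative coefficients; in particular $\sigma$ is itself coefficient-wise log-concave.

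The decisive step is then the decomposition obtained by substituting $\Phi(\mu+\,\cdot\,)=(\mu+\,\cdot\,)\sigma(\mu+1+\,\cdot\,)$ into the Turánian of $\Phi$ and using $(\mu+\alpha)(\mu+\beta)-\mu(\mu+\alpha+\beta)=\alpha\beta$:
\[
\Delta_\Phi(\alpha,\beta;x)=\alpha\beta\,\sigma(\mu+1;x)\,\sigma(\mu+\alpha+\beta+1;x)+(\mu+\alpha)(\mu+\beta)\,(1-x)^{-(\mu+1)}\,\sigma(\alpha;x)\,\sigma(\beta;x).
\]
Both summands have manifestly nonnegative coefficients, so $\sum_{k=0}^{[m/2]}C_k=[x^m]\Delta_\Phi(\alpha,\beta;x)\ge0$, which settles (b). With (a) and (b) established, the log-concave case of Lemma~\ref{LEM:keylemma} gives $[x^m]\Delta_F\ge0$ for every $m$; hence $\Delta_f=x^2\Delta_F$ has nonnegative coefficients and $f(\mu;x)$ is coefficient-wise log-concave. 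I expect the main obstacle to be exactly this part (b): because of the polynomial prefactors $\mu,\mu+\alpha,\dots$ attached to $\sigma$, the Turánian of the neutral series $\Phi$ does not factor as cleanly as that of $\psi$ did, and the whole argument hinges on recognizing that the obstruction reorganizes into the single nonnegative combination $\alpha\beta\,\sigma\sigma+(\mu+\alpha)(\mu+\beta)\,\Theta_{\mu+1}$.
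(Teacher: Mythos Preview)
Your proposal is correct and follows essentially the same architecture as the paper's proof: the paper writes $\tilde\psi(\mu;x)=\frac{\mu\sqrt{x}}{2}\eta(\mu;x)$ with $\eta(\mu;x)=(1-\sqrt{x})^{-\mu-1}-(1+\sqrt{x})^{-\mu-1}$, which is exactly $2\sqrt{x}\,\sigma(\mu+1;x)$ in your notation, and then uses the same two ingredients---Lemma~\ref{LEM:onesignchange} (applied with an index jump of two) for the sign pattern, and the factorization of the Tur\'{a}nian of $\eta$ together with $(\mu+\alpha)(\mu+\beta)>\mu(\mu+\alpha+\beta)$ for the sum positivity. Your exact splitting $\Delta_\Phi=\alpha\beta\,\sigma\sigma+(\mu+\alpha)(\mu+\beta)\,\Theta_{\mu+1}$ is a slightly cleaner repackaging of the paper's inequality-based step, but the content is the same.
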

\begin{proof} 
For each $\alpha,\beta>0$ we get
\[
f(\mu + \alpha; x) f(\mu + \beta; x) - f(\mu; x) f(\mu + \alpha + \beta; x)  := \sum_{m = 2}^\infty \tilde{\phi}_m x^m,
\]
where $\tilde{\phi}_m =  \sum_{k = 1}^m f_k f_{m - k} \tilde{M}_k$ and 
\[
\tilde{M}_k = \frac{(\mu+\alpha)_{2k}(\mu+\beta)_{2(m-k)} - (\mu)_{2k}(\mu+\alpha+\beta)_{2(m-k)} }{(2k-1)!\left(2(m-k)-1\right)!}.
\]
Moreover, we can write $\tilde{\phi}_m$ in the following form: 
\[
\tilde{\phi}_m = \sum_{k = 1}^{[m/2]} f_k f_{m - k} B_k,
\]
where $B_k = \tilde{M}_k + \tilde{M}_{m - k}$ for $k < m/2$, and $B_k = \tilde{M}_k$ for $k = m/2$. The  modified numbers
\[
\tilde{B}_k = (2k - 1)! \left(2 (m - k) - 1 \right)! B_k,
\]
have the same sign as $B_k$ and are obviously equal to $ \tilde{A}_k$ defined in~\eqref{EQ:tildeA}, $\tilde{B}_k = \tilde{A}_k$. 
Therefore, by the proof of Theorem~\ref{THM:logconvexitysecondhyper} and Lemma~\ref{LEM:onesignchange} we see that  the sequence $B_1, B_2, \ldots, B_{[m/2]}$ has no more than one 
change of sign and $B_{[m/2]} > 0$. 

Next, we prove that $\sum_{k = 1}^{[m/2]} B_k > 0$ for $m \geq 2$. Set 
\begin{equation*}
\tilde{\psi}(\mu; x):= \sum_{n = 1}^\infty \frac{(\mu)_{2 n}}{(2 n - 1)!} x^n,~~~
\eta(\mu; x):=\frac{1}{(1-\sqrt{x})^{\mu+1}}-\frac{1}{(1+\sqrt{x})^{\mu+1}}. 
\end{equation*}
Using the binomial theorem it is easy to see that  
$$
\tilde{\psi}(\mu; x) = \frac{\mu\sqrt{x}}{2}\eta(\mu; x).
$$ 
Thus, we have 
\begin{multline}
\tilde{\psi}(\mu + \alpha; x) \tilde{\psi}(\mu + \beta; x) - \tilde{\psi}(\mu; x) \tilde{\psi}(\mu + \alpha + \beta; x) = \sum_{m = 2}^\infty \left(\sum_{k = 0}^{[m/2]} B_k\right) x^m \\
= \frac{(\mu + \alpha) (\mu + \beta) x}{4} \eta(\mu + \alpha; x) \eta(\mu + \beta; x) - \frac{\mu (\mu + \alpha + \beta) x}{4} \eta(\mu; x) \eta(\mu + \alpha + \beta; x).  \label{EQ:coeff3}
\end{multline}
Since $(\mu + \alpha) (\mu + \beta) > \mu (\mu + \alpha + \beta)$, we see that the coefficients of~\eqref{EQ:coeff3} are larger than 
those of 
\[
\frac{\mu (\mu + \alpha + \beta) x}{4}\left[ \eta(\mu + \alpha; x) \eta(\mu + \beta; x) -  \eta(\mu; x) \eta(\mu + \alpha + \beta; x)\right]. 
\]
Therefore, it suffices to prove the coefficient-wise  log-concavity of $\eta(\mu; x)$. Indeed, we have 
\begin{multline} \label{EQ:coeff4}
\eta(\mu + \alpha; x) \eta(\mu + \beta; x) -  \eta(\mu; x) \eta(\mu + \alpha + \beta; x) \\
= (1 - x)^{-1 - \mu} \left[(1 +\sqrt{x})^{-\alpha}  - (1 - \sqrt{x})^{-\alpha} \right]  \cdot \left[(1 + \sqrt{x})^{-\beta}  - (1 - \sqrt{x})^{-\beta} \right].
\end{multline}
Since the coefficients of $\left[(1 +\sqrt{x})^{-\alpha}  - (1 - \sqrt{x})^{-\alpha}\right]$ are negative, as explained in the proof of Theorem~\ref{THM:logconvexitysecondhyper}, it follows from~\eqref{EQ:coeff4} 
that $\eta(\mu; x)$ is coefficient-wise log-concave, and, therefore, $\sum_{k = 0}^{[m/2]} B_k > 0$ for $m \geq 2$ by~\eqref{EQ:coeff3}.

Summarizing, since $\sum_{k = 1}^{[m/2]} B_k > 0$, the sequence $B_1, B_2, \ldots, B_{[m/2]}$ has no more than one 
change of sign and $B_{[m/2]} > 0$, it follows from Lemma~\ref{LEM:keylemma} that $f(\mu; x)$ is coefficient-wise log-concave. 
\end{proof}

\begin{remark}
Similarly to Remark~\ref{REM:r3}, we find that the function
$$
\mu \to \tilde{\psi}_r(\mu; x) := \sum_{n=1}^{\infty} \frac{(\mu)_{nr}}{(nr-1)!}x^n=\frac{\mu}{r}x^{1/r}\sum_{k=0}^{r-1}\frac{\omega_k}{(1-\omega_kx^{1/r})^{\mu+1}},
$$ 
$\omega_k=e^{2\pi{i}k/r}$, is neither coefficient-wise log-convex nor coefficient-wise log-concave for $r \geq 4$ by numerical computations. However, the numerical evidence confirms the following conjecture.  
\end{remark}
\begin{conjecture} \label{CONJ:1}
 Assume that $\{f_n\}_{n = 0}^\infty$ is log-concave and independent of $\mu$. 
Then the function 
\[
\mu\to f(\mu; x) = \sum_{n=1}^\infty f_n \frac{(\mu)_{3n}}{(3n - 1)!}x^n
\]
is coefficient-wise log-concave.
\end{conjecture}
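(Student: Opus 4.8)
The plan is to follow the template established by the proof of Theorem~\ref{THM:logconcavitysecondhyper}, which treated the $r=2$ case. Writing $\Delta(\alpha,\beta;x)=f(\mu+\alpha;x)f(\mu+\beta;x)-f(\mu;x)f(\mu+\alpha+\beta;x)=\sum_{m\ge2}\tilde\phi_m x^m$, we expand $\tilde\phi_m=\sum_{k=1}^{m-1}f_kf_{m-k}\tilde M_k$ where $\tilde M_k$ involves the difference $(\mu+\alpha)_{3k}(\mu+\beta)_{3(m-k)}-(\mu)_{3k}(\mu+\alpha+\beta)_{3(m-k)}$ divided by $(3k-1)!(3(m-k)-1)!$. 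Applying Gauss pairing $\tilde\phi_m=\sum_{k=1}^{[m/2]}f_kf_{m-k}B_k$ with $B_k$ the paired combination, I would reduce, via Lemma~\ref{LEM:keylemma} with the log-concave sequence $\{f_n\}$, the whole problem to two sub-claims: (i) the sign sequence $B_1,\dots,B_{[m/2]}$ has no more than one change of sign (and $B_{[m/2]}>0$), and (ii) the aggregate sum $\sum_{k}B_k\ge0$ for all $m\ge2$.

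For sub-claim (ii), the key is to exploit the explicit closed form of the ``seed'' series with $f_n\equiv1$. Using the root-of-unity identity quoted in Remark~\ref{REM:r3}, one has $\tilde\psi_3(\mu;x)=\sum_{n\ge1}\frac{(\mu)_{3n}}{(3n-1)!}x^n=\frac{\mu}{3}x^{1/3}\sum_{k=0}^{2}\frac{\omega_k}{(1-\omega_kx^{1/3})^{\mu+1}}$. The aggregate sum $\sum_k B_k$ is exactly the $m$-th coefficient of the Turánian $\tilde\psi_3(\mu+\alpha;x)\tilde\psi_3(\mu+\beta;x)-\tilde\psi_3(\mu;x)\tilde\psi_3(\mu+\alpha+\beta;x)$. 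Introducing $\eta_k(\mu;x):=\frac{\omega_k}{(1-\omega_kx^{1/3})^{\mu+1}}$ and the analogue of \eqref{EQ:coeff3}, one writes this Turánian as a combination of $\eta_j(\mu+\alpha;x)\eta_k(\mu+\beta;x)$ type terms; using $(\mu+\alpha)(\mu+\beta)>\mu(\mu+\alpha+\beta)$ one hopes to dominate it below by $\frac{\mu(\mu+\alpha+\beta)x^{2/3}}{9}$ times the Turánian of the simpler object $\vartheta(\mu;x):=\sum_{k=0}^{2}\frac{\omega_k}{(1-\omega_kx^{1/3})^{\mu+1}}$, and then prove coefficient-wise log-concavity of $\vartheta$ by factoring $\vartheta(\mu+\alpha;x)\vartheta(\mu+\beta;x)-\vartheta(\mu;x)\vartheta(\mu+\alpha+\beta;x)$ into a product of two factors each of which has coefficients of one fixed sign (the $r=2$ proof did exactly this, using $(1\pm\sqrt x)^{-\alpha}$). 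This is where I expect real difficulty: for $r=3$ the factorization is not a clean product of two binomials, because $\sum_k \omega_k c_k^{\mu}$ with $c_k=(1-\omega_k t)^{-1}$ does not split multiplicatively the way the $r=2$ cosine/sine pair does. One route is to work directly with the coefficients: the $n$-th Taylor coefficient of $\frac{1}{3}x^{1/3}\sum_k\omega_k(1-\omega_kx^{1/3})^{-\mu-1}$ is $\frac{(\mu+1)_{3n-1}}{(3n-1)!}$ times a constant; one must show that, after the Gauss pairing, $\sum_k B_k$ equals (a positive multiple of) $\sum_{k}\binom{-\alpha}{3k+\epsilon}\binom{-\beta}{\dots}$-type sums all of which are sign-definite. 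Establishing this sign-definiteness of the relevant partial convolutions of $\{\binom{-\alpha}{j}\}$ restricted to residue classes mod $3$ is the genuine analytic obstacle, and it may require a separate lemma of independent interest (a mod-$r$ analogue of the statement that $(1+\sqrt x)^{-\alpha}-(1-\sqrt x)^{-\alpha}$ has coefficients of one sign).

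For sub-claim (i), the single-sign-change property, I would proceed exactly as in Theorem~\ref{THM:logconvexitysecondhyper}: the modified numbers $\tilde B_k=(3k-1)!(3(m-k)-1)!B_k$ have the same sign as $B_k$ and, crucially, should coincide with a quantity $T_{3k,3m}$ of the shape \eqref{EQ:onesignchange} (up to the residue bookkeeping forced by the step $r=3$), so that a version of Lemma~\ref{LEM:onesignchange} — or rather its proof from \cite[proof of Theorem~1]{KSJMAA2010}, which is stated there for general $r$ — gives: if $\tilde B_k\le0$ then $\tilde B_{k-1}<0$. Combined with $B_{[m/2]}>0$ (which follows because the single summand $k=[m/2]$ is a Turánian of a log-concave function $\mu\mapsto(\mu)_{3k}$, hence positive for $\alpha,\beta>0$), this yields the one-sign-change pattern. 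I do not anticipate trouble here provided Lemma~\ref{LEM:onesignchange}'s proof genuinely extends to $r=3$; the only subtlety is checking that the indices $3k$ and $3(m-k)$, which need not both be $\le 3m/2$ simultaneously with the correct parity, still fit the hypotheses of that lemma, and if not, isolating the finitely many boundary $m$ by direct computation.

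Assembling the pieces: granted (i) and (ii), Lemma~\ref{LEM:keylemma} applied with the log-concave sequence $\{f_n\}$ to the sequence $B_1,\dots,B_{[m/2]}$ gives $\tilde\phi_m\ge0$ for every $m\ge2$, hence $\Delta(\alpha,\beta;x)\ge0$ coefficient-wise for all $\alpha,\beta>0$, which is precisely coefficient-wise log-concavity of $\mu\mapsto f(\mu;x)$. The main obstacle, to reiterate, is the closed-form sign analysis in step (ii): the $r=2$ proof leaned on the elementary factorization of $\psi$ and $\tilde\psi$ into $(1\pm\sqrt x)^{-\mu}$ pieces, and no equally clean multiplicative structure survives at $r=3$, so a new sign lemma for root-of-unity-twisted binomial expansions — or an inductive argument in $m$ in the spirit of Lemma~\ref{LEM:extensionCW} — will likely be needed. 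That the conjecture is stated rather than proved in the paper is consistent with this being the genuinely hard point.
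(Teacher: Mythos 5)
The statement you are trying to prove is Conjecture~\ref{CONJ:1}: the paper does not prove it, and explicitly leaves the case $r=3$ open (Theorem~\ref{THM:logconcavitysecondhyper} settles $r=2$, and the authors report numerical counterexamples for $r\ge4$). Your proposal is an accurate reconstruction of the \emph{template} used for $r=2$ --- Gauss pairing, Lemma~\ref{LEM:keylemma}, the one-sign-change property via Lemma~\ref{LEM:onesignchange} (chained three index steps at a time so that $\tilde B_k=T_{3k,3m}\le0$ forces $T_{3k-3,3m}<0$), and the aggregate-sum positivity via the root-of-unity closed form of $\tilde\psi_3$ --- but it is not a proof. The decisive step, your sub-claim (ii) that $\sum_k B_k\ge0$ for all $m\ge2$, is exactly the special case $f_n\equiv1$ of the conjecture itself (coefficient-wise log-concavity of $\tilde\psi_3$), and you supply no argument for it beyond naming the obstacle: the $r=2$ proof rested on the multiplicative factorization \eqref{EQ:coeff4} of the Tur\'anian of $\eta$ into two factors with sign-definite coefficients, and for $r=3$ the cross terms $\eta_j(\mu+\alpha;x)\eta_k(\mu+\beta;x)$ with $j\ne k$ are individually complex-valued, so neither the factorization nor the coefficient-wise domination step ``coefficients of \eqref{EQ:coeff3} are larger than\dots'' survives. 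You correctly diagnose this as the genuine difficulty, but diagnosing it does not close it; the proposal therefore establishes nothing beyond what the paper already records as open.

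Two secondary points. First, your claim that $B_{[m/2]}>0$ ``follows because the single summand is a Tur\'anian of the log-concave function $\mu\mapsto(\mu)_{3k}$'' is only immediate for even $m$ (where $B_{m/2}$ is a single unpaired term); for odd $m$ the endpoint value is a paired sum, and already for $r=2$ the paper needs the nontrivial induction in the Appendix to prove its positivity, so this too would require new work at $r=3$. Second, your fallback of reducing (ii) to sign-definiteness of partial convolutions of $\{\binom{-\alpha}{j}\}$ restricted to residue classes mod $3$ is a plausible research direction but is asserted, not established; if you can prove such a ``mod-$r$ sign lemma'' you would in fact have resolved the conjecture, which is precisely what remains to be done.
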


\section{The series in the functions $\phi_k(\mu)=(\mu)_{2k}/(\mu+1)_k$} \label{SEC:sps}
We consider the series
\begin{equation} \label{EQ:firstconcavity}
h(\mu; x) := \sum_{k = 0}^\infty h_k \frac{(\mu)_{2 k}}{(\mu + 1)_k k!} x^k
=\sum_{k = 0}^\infty h_k \frac{(\mu/2)_{k}((\mu+1)/2)_{k}}{(\mu + 1)_k k!} (4x)^k.
\end{equation}
Along with the series~\eqref{eq:KS2010}, it gives another example of log-neutral function when $h_k=1$ for all $k$.
Indeed, if we set  
$$
\lambda(\mu;x):=\sum_{k=0}^\infty \frac{(\mu)_{2 k}}{(\mu + 1)_k k!} x^k \quad (|4 x| < 1),
$$
then $\lambda(\mu; x)$ is a hypergeometric function with the following properties. 
\begin{lemma}~\cite[Lemma 7]{Shibukawa2019}\label{LEM:firsthypergeometric}
The hypergeometric function $\lambda(\mu; x)$ has the following properties. 
\begin{enumerate}
\item Closed form
\[
\lambda(\mu; x) = \left(\frac{1 - \sqrt{1 - 4 x}}{2 x}\right)^{\mu}.
\]
\item Index law
\[
\lambda(\alpha; x) \lambda(\beta; x) = \lambda(\alpha + \beta;x).
\]
\end{enumerate}
\end{lemma}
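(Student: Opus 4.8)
The plan is to establish the closed form in part (1) first and then obtain part (2) as an immediate corollary. Write $y=y(x):=\bigl(1-\sqrt{1-4x}\bigr)/(2x)$ for the branch normalized by $y(0)=1$; it is the unique formal power series solving $xy^{2}-y+1=0$, equivalently $y=1+xy^{2}$. Once we know $\lambda(\mu;x)=y(x)^{\mu}$, the index law $\lambda(\alpha;x)\lambda(\beta;x)=y^{\alpha}y^{\beta}=y^{\alpha+\beta}=\lambda(\alpha+\beta;x)$ is purely formal, so the whole content is the closed form.

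For part (1) I would first rewrite $\lambda$ as a Gauss hypergeometric function. Using the Pochhammer duplication $(\mu)_{2k}=4^{k}(\mu/2)_{k}((\mu+1)/2)_{k}$ — already exploited in \eqref{EQ:firstconcavity} — one gets
\[
\lambda(\mu;x)=\sum_{k=0}^{\infty}\frac{(\mu/2)_{k}\,((\mu+1)/2)_{k}}{(\mu+1)_{k}\,k!}\,(4x)^{k}={}_2F_1\!\left(\tfrac{\mu}{2},\tfrac{\mu+1}{2};\mu+1;4x\right),\qquad |4x|<1.
\]
Then invoke the classical quadratic transformation of Gauss
\[
{}_2F_1\!\left(a,a+\tfrac12;2a+1;z\right)=\left(\frac{1+\sqrt{1-z}}{2}\right)^{-2a}
\]
(see, e.g., \cite{AAR} or Erd\'{e}lyi, \emph{Higher Transcendental Functions}, Vol.~I, \S2.8), valid as an identity of power series on $|z|<1$ with $\sqrt{1-z}\to1$ as $z\to0$, and specialize $a=\mu/2$, $z=4x$. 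This gives $\lambda(\mu;x)=\bigl((1+\sqrt{1-4x})/2\bigr)^{-\mu}$; rationalizing the base, $2/(1+\sqrt{1-4x})=(1-\sqrt{1-4x})/(2x)=y(x)$, which is the asserted formula.

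As a self-contained alternative that avoids hypergeometric transformation tables, one can argue directly by Lagrange inversion. With $u=u(x)$ the power series solving $u-u^{2}=x$, $u(0)=0$, the substitution $u=xy$ turns $y=1+xy^{2}$ into $y=1/(1-u)$, so $y^{\mu}=(1-u)^{-\mu}$; applying Lagrange inversion (to $x=u(1-u)$, equivalently $u=x\cdot(1-u)^{-1}$) with the test function $t\mapsto(1-t)^{-\mu}$ yields $y^{\mu}=1+\sum_{k\ge1}\frac{\mu}{k}\binom{\mu+2k-1}{k-1}x^{k}$, which is the $t=2$ case of the generalized binomial series identity $\mathcal B_{2}(x)^{\mu}=\sum_{k\ge0}\binom{\mu+2k}{k}\frac{\mu}{\mu+2k}x^{k}$ (Graham--Knuth--Patashnik, \emph{Concrete Mathematics}, (5.60)). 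It then remains to check term by term that $\frac{\mu}{k}\binom{\mu+2k-1}{k-1}=\dfrac{(\mu)_{2k}}{(\mu+1)_{k}\,k!}$: cancelling the common factor $(\mu+1)_{k}=(\mu+1)\cdots(\mu+k)$ reduces both sides to $\mu\,(\mu+k+1)_{k-1}/k!$ for $k\ge1$, and both equal $1$ for $k=0$. Either route completes part (1), hence part (2).

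The only genuine step is the closed form, and even there the algebra is short; the points needing care are (i) fixing the branch $\sqrt{1-4x}\to1$ as $x\to0$ so that the right-hand side tends to $\lambda(\mu;0)=1$, (ii) reading the quadratic transformation (or the generalized binomial identity) as an identity of power series near $x=0$, and (iii) the Pochhammer bookkeeping in the coefficient comparison of the self-contained route, where $k=0$ must be treated separately. I expect locating and citing the precise quadratic transformation, or equivalently verifying the coefficient identity in (iii), to be the only — and minor — obstacle.
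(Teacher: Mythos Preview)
Your argument is correct. The paper does not actually prove this lemma: it is quoted verbatim from \cite[Lemma~7]{Shibukawa2019} and used as a black box, so there is no ``paper's own proof'' to compare against. What you supply is therefore strictly more than what the paper contains.

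Both of your routes are sound. The hypergeometric route is the cleanest: the rewriting $\lambda(\mu;x)={}_2F_1(\mu/2,(\mu+1)/2;\mu+1;4x)$ via Pochhammer duplication is exactly formula~\eqref{EQ:firstconcavity}, and the quadratic transformation ${}_2F_1(a,a+\tfrac12;2a+1;z)=\bigl((1+\sqrt{1-z})/2\bigr)^{-2a}$ is standard (it appears, e.g., as Andrews--Askey--Roy~\cite{AAR}, Theorem~3.1.3 specialized, or DLMF~15.4.18); your rationalization step is correct. The Lagrange-inversion alternative is also fine, and your term-by-term verification $\tfrac{\mu}{k}\binom{\mu+2k-1}{k-1}=(\mu)_{2k}/\bigl((\mu+1)_k\,k!\bigr)$ checks out after cancelling $(\mu+1)_k$. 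The branch/normalization remarks (i)--(iii) you flag are exactly the right hygiene points and need no further work.
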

This makes the following conjecture very natural.

\begin{conjecture} \label{CONJ:2}
 Assume that $\{h_k\}_{k = 0}^\infty$ is log-concave \emph{(}log-convex\emph{)} and independent of $\mu$. 
Then the function $\mu\to h(\mu;x)$ defined in \eqref{EQ:firstconcavity} is coefficient-wise log-concave \emph{(}log-convex\emph{)}.
\end{conjecture}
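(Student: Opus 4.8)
The plan is to run, \emph{mutatis mutandis}, the argument behind Theorems~\ref{THM:logconvexitysecondhyper}--\ref{THM:logconcavitysecondhyper} (and, ultimately, behind~\cite[Theorem~1]{KSJMAA2010}), with the log-neutral base function $(1-x)^{-\mu}$ replaced throughout by the log-neutral function $\lambda(\mu;x)$ of Lemma~\ref{LEM:firsthypergeometric}. Write $c_k(\mu)=(\mu)_{2k}/((\mu+1)_k\,k!)$, so $h(\mu;x)=\sum_{k\ge0}h_kc_k(\mu)x^k$ and, for fixed $\alpha,\beta>0$,
\[
\Delta_h(\alpha,\beta;x)=\sum_{m\ge1}\phi_mx^m,\qquad\phi_m=\sum_{k=0}^mh_kh_{m-k}M_k,\quad M_k:=c_k(\mu+\alpha)c_{m-k}(\mu+\beta)-c_k(\mu)c_{m-k}(\mu+\alpha+\beta).
\]
Gauss pairing gives $\phi_m=\sum_{k=0}^{[m/2]}h_kh_{m-k}A_k$ with $A_k=M_k+M_{m-k}$ for $k<m/2$ and $A_{m/2}=M_{m/2}$. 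The key point is that $A_k$ does not involve $\{h_k\}$, so a single statement about $A_0,\dots,A_{[m/2]}$ serves both halves of Conjecture~\ref{CONJ:2}: by Lemma~\ref{LEM:keylemma} it suffices to show, for every $m\ge1$ and all $\alpha,\beta>0$, that $A_0,\dots,A_{[m/2]}$ has at most one change of sign and that $\sum_kA_k=0$; indeed, Lemma~\ref{LEM:keylemma} then gives $\phi_m\ge0$ when $\{h_k\}$ is log-concave and $\phi_m\le0$ when $\{h_k\}$ is log-convex (the opposite signs arising because the Gauss weights $h_kh_{m-k}$ are nondecreasing on $0\le k\le[m/2]$ in the first case and nonincreasing in the second).

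That $\sum_{k=0}^{[m/2]}A_k=0$ for every $m\ge1$ is immediate: this sum equals $\sum_{k=0}^mM_k$, which is the coefficient of $x^m$ in $\lambda(\mu+\alpha;x)\lambda(\mu+\beta;x)-\lambda(\mu;x)\lambda(\mu+\alpha+\beta;x)$, and this difference vanishes identically by the index law of Lemma~\ref{LEM:firsthypergeometric}(2). This is precisely the log-neutrality of $h(\mu;x)$ at $h_k\equiv1$ that was highlighted after~\eqref{EQ:firstconcavity}, and it is what makes the two-sided statement possible (cf.\ the discussion in the Introduction).

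It remains to prove the one-change-of-sign property, which is the heart of the problem. Following~\cite[proof of Theorem~1]{KSJMAA2010} and Lemma~\ref{LEM:onesignchange}, one clears factorials by setting $\tilde A_k=k!\,(m-k)!\,A_k$ (same sign as $A_k$) and uses $c_j(\mu)\,j!=\mu\,(\mu+j+1)_{j-1}$ $(j\ge1)$ to write, for $1\le k<m/2$,
\[
\tilde A_k=u_k+v_k-r_k-s_k,\qquad B_j(c):=(\mu+c+j+1)_{j-1},
\]
\[
u_k=(\mu+\alpha)(\mu+\beta)B_k(\alpha)B_{m-k}(\beta),\quad v_k=(\mu+\alpha)(\mu+\beta)B_k(\beta)B_{m-k}(\alpha),
\]
\[
r_k=\mu(\mu+\alpha+\beta)B_k(0)B_{m-k}(\alpha+\beta),\quad s_k=\mu(\mu+\alpha+\beta)B_k(\alpha+\beta)B_{m-k}(0).
\]
One checks readily that $u_kv_k\ge r_ks_k$: the prefactors obey $((\mu+\alpha)(\mu+\beta))^2>(\mu(\mu+\alpha+\beta))^2$, and the Pochhammer parts split into factors of the shape $(\mu+i+\alpha)(\mu+i+\beta)-(\mu+i)(\mu+i+\alpha+\beta)=\alpha\beta>0$. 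The boundary indices are also easy: $\tilde A_{m/2}=m!\,M_{m/2}>0$ when $m$ is even, by strict log-concavity of $\mu\mapsto c_{m/2}(\mu)$ (immediate from $[\log c_j(\mu)]''=-\mu^{-2}-\sum_{i=j+1}^{2j-1}(\mu+i)^{-2}<0$), while $\tilde A_0=m!\,[c_m(\mu+\alpha)+c_m(\mu+\beta)-c_m(\mu)-c_m(\mu+\alpha+\beta)]<0$ because $c_m$, being a product of linear factors with nonpositive roots, is strictly convex on $[0,\infty)$ for $m\ge2$. What is needed to finish is the implication $\tilde A_k\le0\Rightarrow\tilde A_{k-1}<0$ for $1\le k\le[m/2]$: granted it, the sequence $\tilde A_0,\dots,\tilde A_{[m/2]}$ has the pattern ``negatives, then at most one zero, then positives'' of Section~\ref{SEC:pre}, and Lemma~\ref{LEM:keylemma} completes the proof. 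By Lemma~\ref{LEM:positivity}, this implication would follow for $1\le k<m/2$ from the four-term split above provided one can show that the largest of $u_k,v_k,r_k,s_k$ is always $u_k$ or $v_k$ (it is $s_k$ at $k=0$, which is why that case was treated separately).

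The main obstacle is precisely this identification of the dominant term, i.e.\ the analogue of Lemma~\ref{LEM:onesignchange}. In Theorems~\ref{THM:logconvexitysecondhyper}--\ref{THM:logconcavitysecondhyper} the normalised Tur\'anian coefficient reduced to the purely Pochhammer expression $T_{2k,2m}$, whose building blocks $(\mu+c)_j$ begin at a point depending only on the shift $c$; here the denominator $(\mu+1)_k$ does \emph{not} cancel between $c_k(\mu+\alpha)$ and $c_k(\mu)$, and the surviving blocks $B_j(c)=(\mu+c+j+1)_{j-1}$ have both their offset and their length varying with $k$. Consequently the comparison is delicate: for $1\le k<m/2$ the term $r_k$ carries the maximal shift $\alpha+\beta$ on the \emph{longer} block $B_{m-k}$, so one must weigh the favourable prefactor $(\mu+\alpha)(\mu+\beta)>\mu(\mu+\alpha+\beta)$ of $u_k,v_k$ against a potentially larger block factor in $r_k,s_k$; resolving this competition cleanly --- perhaps after partitioning the ranges of $k$ and of $(\alpha,\beta)$, or after a more clever regrouping of $\tilde A_k$, or by comparing $\tilde A_k$ with $\tilde A_{k-1}$ directly --- is where the real work lies. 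A possible alternative is to bypass the combinatorics entirely and transcribe the proof of~\cite[Theorem~1]{KSJMAA2010} verbatim in terms of the closed form $\lambda(\mu;x)=w^\mu$, $w=(1-\sqrt{1-4x})/(2x)$, $w^\mu w^\nu=w^{\mu+\nu}$, the only non-formal ingredient being, again, a one-sign-change statement for the Taylor coefficients of $w^\mu$.
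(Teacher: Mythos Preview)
Your framework is exactly the one the paper sets up immediately before stating Conjecture~\ref{CONJ:2}: Gauss pairing, the vanishing of $\sum_kA_k$ via the index law of Lemma~\ref{LEM:firsthypergeometric}, and reduction to showing that $A_0,\dots,A_{[m/2]}$ has at most one change of sign. Your verification that $\sum_kA_k=0$, that $\tilde A_0<0$ (convexity of $\mu\mapsto c_m(\mu)$), and that the middle term is positive (log-concavity of $\mu\mapsto c_j(\mu)$) are all correct, as is the product inequality $u_kv_k>r_ks_k$.

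The substantive point, however, is that Conjecture~\ref{CONJ:2} is \emph{open in the paper}: the authors write ``So far, we failed to prove this fact in general'' precisely about the one-change-of-sign property you isolate, and then retreat to Theorem~\ref{THM:discretelogconcavity}, which handles only integer shifts (coefficient-wise for $\alpha+\beta\le4$, pointwise for all $\alpha,\beta\in\bN$) by brute-force polynomial identities and Cylindrical Algebraic Decomposition. Your proposal does not close this gap either; you explicitly flag the identification of the dominant term among $u_k,v_k,r_k,s_k$ as ``the main obstacle'' and ``where the real work lies,'' and the suggested route through Lemma~\ref{LEM:positivity} is only conditional. So what you have is a correct reduction that matches the paper's own reduction, followed by an honest admission that the decisive step is missing. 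That step is genuinely hard here because, unlike in Lemma~\ref{LEM:onesignchange}, the blocks $B_j(c)=(\mu+c+j+1)_{j-1}$ have both offset and length depending on $k$, and the paper's partial proof for $\alpha=\beta=1$ already requires comparing two quartic polynomials $n_1,n_2$ in three variables rather than any clean dominance argument. In short: your plan is the paper's plan; neither completes the proof, and no alternative argument is known.

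A minor slip: $\tilde A_{m/2}=((m/2)!)^2M_{m/2}$, not $m!\,M_{m/2}$; this does not affect the sign.
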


Forming the generalized Tur\'{a}nian
\begin{equation*}
\Delta_{h}(\alpha,\beta;x)=h(\mu+\alpha;x) h(\mu+\beta;x) - h(\mu; x)h(\mu+\alpha+\beta; x)=\sum_{m = 2}^\infty \delta_m x^m,
\end{equation*}
we will have 
\begin{align*}
\delta_m&=\sum_{k = 0}^m h_k h_{m - k} M_k, 
\\
M_k&=\frac{1}{k! (m - k)!} \left[\frac{(\mu + \alpha)_{2k} (\mu + \beta)_{2 (m - k)}}{(\mu + \alpha + 1)_k (\mu + \beta + 1)_{m - k}} - \frac{(\mu)_{2k} (\mu + \alpha + \beta)_{2 (m - k)}}{(\mu +  1)_k (\mu +\alpha +  \beta + 1)_{m - k}} \right].
\end{align*}
Furthermore, we can write 
\begin{equation*}
\delta_m=\sum_{k = 0}^{[m/2]}h_k h_{m - k} A_k \ \ \text{ for each } \ \ m \geq 2,
\end{equation*}
where $A_k=M_k + M_{m-k}$ for $k < m \slash 2$ and $A_k=M_k$ for $k=m\slash 2$. 
As
\begin{equation*}
\sum_{m = 0}^\infty \left(\sum_{k = 0}^{[m/2]} A_k\right) x^m = \sum_{m = 0}^\infty \left(\sum_{k = 0}^{m} M_k \right) x^m 
= \lambda(\mu + \alpha; x) \lambda(\mu + \beta; x) - \lambda(\mu; x) \lambda(\mu + \alpha + \beta; x),
\end{equation*}
Lemma~\ref{LEM:firsthypergeometric}  implies that $\sum_{k = 0}^{[m/2]} A_k = 0$ for $m \geq 0$. Hence, to apply Lemma~\ref{LEM:keylemma} we only need to prove that the sequence $\{A_k\}_{k=0}^{[m/2]}$ has no more than one change of sign.  So far, we failed to prove this fact in general.  Nevertheless, we established the following. 

\begin{theorem} \label{THM:discretelogconcavity}
Let $h(\mu; x)$ be the formal power series defined by~\eqref{EQ:firstconcavity}. If $\{ h_k \}_{k = 0}^\infty$ is log-concave \emph{(}log-convex\emph{)} and independent of $\mu$, then $\mu \to h(\mu;x)$ is log-concave \emph{(}log-convex\emph{)} for integer shifts, i.e. $\Delta_h(\alpha,\beta;x)\ge0$ \emph{(}$\Delta_h(\alpha,\beta;x)\le0$\emph{)} for $\alpha,\beta\in\bN$ and coefficient-wise  log-concave \emph{(}log-convex\emph{)} if furthermore $\alpha+\beta\le4$.
\end{theorem}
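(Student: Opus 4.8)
The plan is to establish the two assertions separately, using the tools already assembled in Section~\ref{SEC:pre} together with the structural facts collected just before the statement. Recall that for $h(\mu;x)$ defined by~\eqref{EQ:firstconcavity} we have $\Delta_h(\alpha,\beta;x)=\sum_{m\ge2}\delta_m x^m$ with $\delta_m=\sum_{k=0}^{[m/2]}h_kh_{m-k}A_k$, and, crucially, $\sum_{k=0}^{[m/2]}A_k=0$ for every $m$ by the index law in Lemma~\ref{LEM:firsthypergeometric}. So the entire difficulty is concentrated in controlling the sign pattern of the finite sequence $A_0,A_1,\dots,A_{[m/2]}$. My strategy is: first handle \emph{integer} shifts by reducing, via Lemma~\ref{LEM:extension}, to the single case $\alpha=\beta=1$, where the Tur\'an-type inequality $[h(\mu+1;x)]^2-h(\mu;x)h(\mu+2;x)\ge0$ ($\le0$) can be verified by a direct computation of the coefficient sequence; second, for the coefficient-wise claim with $\alpha+\beta\le4$, reduce to small cases using Lemma~\ref{LEM:extensionCW} and check the one-sign-change property of $\{A_k\}$ by hand in each.

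First I would treat integer shifts. By Lemma~\ref{LEM:extension} it suffices to prove $\Delta_h(1,1;x)\ge0$ ($\le0$) coefficient-wise is not needed here — only the analytic inequality for all $\mu\ge0$ — but in fact I will prove the stronger coefficient-wise statement for $(\alpha,\beta)=(1,1)$ directly, since that is what is needed for the second half anyway. For $\alpha=\beta=1$ one has $\sum_{k=0}^{[m/2]}A_k=0$, so by Lemma~\ref{LEM:keylemma} it is enough to show $A_0,\dots,A_{[m/2]}$ has at most one change of sign, and to pin down the sign of the first (or last) nonzero term. Here I expect the explicit closed form $\lambda(\mu;x)=\bigl((1-\sqrt{1-4x})/(2x)\bigr)^{\mu}$ to make $\xi_\mu(\alpha,\beta;x):=\lambda(\mu+\alpha)\lambda(\mu+\beta)-\lambda(\mu)\lambda(\mu+\alpha+\beta)\equiv0$, which only tells us the $A_k$ sum to zero; to get the sign pattern one analyzes $\tilde A_k$ as in the proof of Theorem~\ref{THM:logconvexitysecondhyper}: write $\tilde A_k=u_k+v_k-r_k-s_k$ with the four rising-factorial-quotient products, show $u_k=\max(u_k,v_k,r_k,s_k)$ and apply Lemma~\ref{LEM:positivity} to deduce a monotone-in-$k$ sign behaviour, hence one sign change. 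Once $\Delta_h(1,1;x)\ge0$ is known coefficient-wise for $\mu\ge0$, it holds as an analytic inequality, and Lemma~\ref{LEM:extension} upgrades it to $\Delta_h(\alpha,\beta;x)\ge0$ for all $\alpha,\beta\in\bN$ and $\mu\ge0$; the log-convex case is identical with inequalities reversed (note the dichotomy is clean because $h_k\equiv1$ gives a log-neutral function by Lemma~\ref{LEM:firsthypergeometric}, so there is no asymmetry to worry about).

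For the coefficient-wise claim when $\alpha+\beta\le4$, I invoke Lemma~\ref{LEM:extensionCW}: it suffices to prove coefficient-wise log-concavity (convexity) for $\alpha=1$ and $\beta\in\{1,2,3\}$. For each such pair, and each $m\ge2$, I must show the finite sequence $\{A_k\}_{k=0}^{[m/2]}$ has at most one change of sign; combined with $\sum_k A_k=0$ (so the first and last nonzero terms have opposite signs, or all are zero), Lemma~\ref{LEM:keylemma} then gives $\delta_m\le0$ for log-concave $\{h_k\}$ ($\ge0$ for log-convex). The one-sign-change property is again attacked through $\tilde A_k=(2k-1)!(2(m-k)-1)!\,\cdot(\text{stuff})$ — more precisely through the quantity $T_{k,m}$ of Lemma~\ref{LEM:onesignchange} suitably adapted — but note that Lemma~\ref{LEM:onesignchange} as stated is for the \emph{plain} rising factorials $(\mu)_k$, whereas here we have the quotients $(\mu)_{2k}/(\mu+1)_k$. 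So the main obstacle is precisely this: establishing the analogue of Lemma~\ref{LEM:onesignchange} for the modified products appearing in $M_k$, i.e. proving that if $\tilde A_k\le0$ for some $1\le k\le m/2$ then $\tilde A_{k-1}<0$. I would try to carry this out by writing each factor $(\mu+\gamma)_{2j}/(\mu+\gamma+1)_j$ as $\mu+\gamma$ times a product and comparing ratios $\tilde A_{k-1}/\tilde A_k$ termwise, reducing to an inequality among four positive quantities to which Lemma~\ref{LEM:positivity} applies; if that fails in general, the restriction $\alpha+\beta\le4$ (hence $\beta\le3$, $m$ unrestricted but shifts small) should make the ratio estimates tractable. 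This modified-monotonicity lemma is where essentially all the real work lies; everything else is bookkeeping via~\eqref{eq:TuranianIdentity} and Lemmas~\ref{LEM:extension}–\ref{LEM:extensionCW}.
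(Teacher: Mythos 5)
Your architecture matches the paper's: Gauss pairing, the index law of Lemma~\ref{LEM:firsthypergeometric} giving $\sum_{k=0}^{[m/2]}A_k=0$, Lemma~\ref{LEM:keylemma} to pass from the sign pattern of $\{A_k\}$ to the sign of $\delta_m$, Lemma~\ref{LEM:extension} to upgrade the case $(\alpha,\beta)=(1,1)$ to all integer shifts, and Lemma~\ref{LEM:extensionCW} to obtain the coefficient-wise claim for $\alpha+\beta\le4$ from the cases $\alpha=1$, $\beta\in\{1,2,3\}$. But the proof is not complete: the entire content of the theorem is the one-sign-change property of $A_0,\ldots,A_{[m/2]}$, and you do not prove it --- you explicitly defer it (``this modified-monotonicity lemma is where essentially all the real work lies''). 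Moreover, the mechanism you sketch for it is not the right one. Lemma~\ref{LEM:positivity} produces a one-shot sign conclusion ($u+v>r+s$ from $uv>rs$ and $u$ maximal); it does not propagate a sign from index $k$ to index $k-1$, which is what ``at most one change of sign'' requires, namely $\tilde A_k\le0\Rightarrow\tilde A_{k-1}<0$. In the paper this implication is obtained by computing the ratios $u_{k-1}/u_k$, $r_{k-1}/r_k$, $s_{k-1}/s_k$ explicitly, clearing denominators, and reducing to a polynomial implication $n_1\le0\Rightarrow n_2<0$ in the variables $q=k-2$, $t=m-2k$, $\mu$. For $(\alpha,\beta)=(1,1)$ this follows from the identity $n_2=n_1-(\text{a polynomial with nonnegative coefficients})$; for $(1,2)$ and $(1,3)$ no such identity is available and the authors resort to Cylindrical Algebraic Decomposition. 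Note also that the authors state explicitly, just before the theorem, that they could not prove the one-sign-change property for general $\alpha,\beta$ --- which is exactly why the coefficient-wise conclusion is restricted to $\alpha+\beta\le4$. So your remark that ``the restriction $\alpha+\beta\le4$ should make the ratio estimates tractable'' is optimistic: the general case is open, and the small cases still require nontrivial, case-specific computation that your proposal does not supply.

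Two smaller points. First, the correct normalization here is $\tilde A_k=k!\,(m-k)!\,A_k$, not $(2k-1)!\,(2(m-k)-1)!\,A_k$ (that is the normalization from Theorem~\ref{THM:logconcavitysecondhyper}); and, as you note yourself, Lemma~\ref{LEM:onesignchange} concerns plain Pochhammer products and does not apply to the quotients $(\mu+\gamma)_{2k}/(\mu+\gamma+1)_k$, so an adapted statement must actually be proved, not merely invoked. Second, you have the signs backwards in the conclusion: for log-concave $\{h_k\}$, Lemma~\ref{LEM:keylemma} with $\sum_kA_k=0\ge0$ and the pattern $(-\cdots0\cdots+)$ gives $\delta_m\ge0$, i.e.\ $\Delta_h\ge0$, which is log-concavity; you wrote $\delta_m\le0$.
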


\begin{proof}
Set $\alpha=\beta=1$ and consider the following generalized Tur\'{a}nian
\begin{equation*}
\Delta_{h}(1, 1; x)= h(\mu +1;x)^2 - h(\mu;x) h(\mu +2;x)=\sum_{m = 2}^\infty \delta_m x^m,
\end{equation*}
where
\begin{align*}
\delta_m & = \sum_{k = 0}^m h_k h_{m - k} M_k, \\
M_k    & =   \frac{1}{k! (m - k)!} \left[\frac{(\mu + 1)_{2k} (\mu + 1)_{2 (m - k)}}{(\mu + 2)_k (\mu + 2)_{m - k}} - \frac{(\mu)_{2k} (\mu + 2)_{2 (m - k)}}{(\mu +  1)_k (\mu + 3)_{m - k}} \right].
\end{align*}
Gauss pairing yields: 
\begin{align*}
\delta_m & = \sum_{k = 0}^{[m/2]} h_k h_{m - k} A_k \ \ \text{ for each } \ \ m \geq 2,
\end{align*}
where $A_k = M_k + M_{m - k}$ for $k < m \slash 2$ and $A_k = M_k$ for $k = m \slash 2$. 
For each $k \geq 0$, we set $\tilde{A}_k = k! (m - k)! A_k$, which has the same sign as $A_k$. Then 
\[ 
 \tilde{A}_k =
\begin{cases} 
      2 \cdot \underbrace{\frac{(\mu + 1)_{2 k} (\mu + 1)_{2 (m - k)}}{(\mu + 2)_k (\mu + 2)_{m - k}}}_{u_k}   -\underbrace{\frac{(\mu)_{2 k} (\mu + 2)_{2 (m - k)}}{(\mu + 1)_{k} (\mu +3)_{m - k}}}_{r_k} 
      \\
      -   \underbrace{\frac{(\mu )_{2  (m - k)} (\mu + 2)_{2 k}}{(\mu + 1)_{m - k}  (\mu + 3)_{k}}}_{s_k},  \quad \quad \text{ if }  \quad k < m/2, \\
      \dfrac{(\mu + 1)_{2 k}^2}{(\mu + 2)_k^2}  -  \dfrac{(\mu )_{2  k} (\mu + 2)_{2 k}}{(\mu + 1)_{k}  (\mu + 3)_{k}}, \quad \quad \text{ if } \quad k = m/2.
   \end{cases}
\]
Our next goal is to show that the sequence $\{\tilde{A}_k\}_{k = 0}^{[m/2]}$  has no more than one change of sign, \ie, 
\begin{equation} \label{EQ:onechangeofsign}
\tilde{A}_k \leq 0 \quad \Rightarrow \quad  \tilde{A}_{k - 1} < 0 \ \ \text{ for } \ \  k \geq 1. 
\end{equation}
Since for $m\ge2$
\begin{align*}
\tilde{A}_0 & = 2 \frac{(\mu + 1)_{2 m}}{(\mu + 2)_m} - \frac{(\mu + 2)_{2 m}}{(\mu + 3)_m} - \frac{(\mu)_{2 m}}{(\mu + 1)_m} \\
& = \frac{(\mu + 2)_{2 m - 2}}{(\mu + 3)_{m - 2}} \left[ 2 \frac{(\mu + 1) (\mu + 2 m)}{(\mu + 2) (\mu + m + 1)} 
- \frac{(\mu + 2 m) (\mu + 2 m + 1)}{(\mu + m +1) (\mu + m + 2)} - \frac{\mu}{\mu + 2} \right] \\ 
& = - \frac{(\mu + 2)_{2 m - 2}}{(\mu + 3)_{m - 2}} \cdot \frac{(m - 1) m ( \mu + 4)}{(\mu + 2) (\mu + m +1) (\mu + m + 2)} < 0,
\end{align*}
we see that the claim~\eqref{EQ:onechangeofsign} holds for $k = 1$. If $k \geq 2$, then we have 
\begin{equation*}
\tilde{A}_k= 2 u_k - r_k - s_k = \frac{(\mu+2)_{2 k - 2} (\mu + 2)_{2 (m - k) - 2}}{(\mu + 3)_{k - 2} (\mu + 3)_{(m - k) - 2}} \left(2 g_1 - g_2 - g_3 \right),
\end{equation*}
where 
\begin{align*}
g_1 & = \frac{(\mu + 1)^2 (\mu + 2 k) (\mu + 2 (m -k))}{(\mu + 2)^2 (\mu + k + 1) (\mu + (m - k) +1)}, \\
g_2 & = \frac{\mu (\mu + 2 (m - k)) (\mu + 2 (m - k) +1)}{(\mu + 2) (\mu + (m - k) + 1) (\mu + (m - k) + 2)}, \\
g_3 & = \frac{\mu (\mu + 2 k) (\mu + 2 k + 1)}{(\mu + 2) (\mu + k + 1) (\mu + k + 2)}.
\end{align*}
Set $q=k-2$ and $t=m-2k$. Then $q \geq 0$ and $t \geq 0$. A direct calculation implies that 
\[
2 g_1 - g_2 - g_3 = \frac{n_1}{(\mu + 2)^2 (\mu + q + 3) (\mu + q + 4) (\mu + q + t + 3) (\mu + q + t + 4)},
\]
where 
\begin{multline} \label{EQ:n1}
n_1 = 4 \mu ^4+52 \mu ^3+256 \mu ^2+576 \mu +8 q^4+4 \mu ^2 q^3+32 \mu  q^3+16 q^3 t+96 q^3+6 \mu ^3 q^2 \\
+66 \mu ^2 q^2+264 \mu  q^2+8 q^2 t^2+6 \mu ^2 q^2 t+48 \mu  q^2 t+144 q^2
   t+416 q^2+2 \mu ^4 q+38 \mu ^3 q \\ +244 \mu ^2 q+688 \mu  q+2 \mu ^2 q t^2+16 \mu  q t^2+48 q t^2+6 \mu ^3 q t+66 \mu ^2 q t+264 \mu  q t+416 q t+768 q \\ -\mu ^4 t^2-7 \mu ^3
   t^2-10 \mu ^2 t^2+24 \mu  t^2+64 t^2+\mu ^4 t+19 \mu ^3 t+122 \mu ^2 t+344 \mu  t+384 t+512.
\end{multline}
Thus, we see that $\tilde{A}_k \leq 0$ is equivalent to $n_1 \leq 0$ with $q \geq 0, t \geq 0$, and  $\mu \geq 0$. 
Set
\begin{align*}
I_1 & = \frac{u_{k - 1}}{u_k} = \frac{(k+\mu +1) (2 k-\mu -2 m-2) (2 k-\mu -2 m-1)}{(2 k+\mu -1) (2 k+\mu ) (-k+\mu +m+2)}, \\
I_2 & = \frac{r_{k - 1}}{r_k} = \frac{(k+\mu ) (2 k-\mu -2 m-3) (2 k-\mu -2 m-2)}{(2 k+\mu -2) (2 k+\mu -1) (-k+\mu +m+3)}, \\
I_3 & = \frac{s_{k - 1}}{s_k} = \frac{(k+\mu +2) (2 k-\mu -2 m-1) (2 k-\mu -2 m)}{(2 k+\mu ) (2 k+\mu +1) (-k+\mu +m+1)}. 
\end{align*}
Then 
\begin{align*}
\tilde{A}_{k - 1} & = 2 u_{k - 1} - r_{k - 1} - s_{k - 1} = 2 I_1 u_k - I_2 r_k - I_3 s_k \\
& =  \frac{(\mu+2)_{2 k - 2} (\mu + 2)_{2 (m - k) - 2}}{(\mu + 3)_{k - 2} (\mu + 3)_{(m - k) - 2}} \left(2 I_1 g_1 -  I_2 g_2 - I_3 g_3 \right),
\end{align*}
where 
\begin{multline*}
2 I_1 g_1 -  I_2 g_2 - I_3 g_3 = \frac{(\mu + 2 q + 2 t + 4) (\mu + 2 q + 2 t + 5) n_2}{(\mu + 2)^2 (\mu + q + 3) (\mu + 2 q + 2) (\mu + 2 q + 3)} \\
\times \frac{1}{(\mu + q + t + 3) (\mu + q + t + 4) (\mu + q + t + 5)}
\end{multline*}
with
\begin{multline} \label{EQ:n2}
n_2 = 18 \mu ^3+150 \mu ^2+408 \mu +8 q^4+4 \mu ^2 q^3+32 \mu  q^3+16 q^3 t+96 q^3+6 \mu ^3 q^2+66 \mu ^2 q^2 \\+264 \mu  q^2+8 q^2 t^2+6 \mu ^2 q^2 t+48 \mu  q^2 t+128 q^2 t+400 q^2+2
   \mu ^4 q+38 \mu ^3 q+240 \mu ^2 q+656 \mu  q \\ +2 \mu ^2 q t^2+16 \mu  q t^2+32 q t^2+6 \mu ^3 q t+62 \mu ^2 q t+232 \mu  q t+304 q t+672 q-\mu ^4 t^2-7 \mu ^3 t^2 \\ -12 \mu ^2
   t^2+8 \mu  t^2+24 t^2-3 \mu ^4 t-15 \mu ^3 t+14 \mu ^2 t+160 \mu  t+192 t+360. 
\end{multline}
Therefore, we conclude that $\tilde{A}_{k - 1} < 0$ is equivalent to $n_2 < 0$ with $q \geq 0, t \geq 0$, and  $\mu \geq 0$. 
By~\eqref{EQ:n1} and~\eqref{EQ:n2}, we have
\begin{multline} \label{EQ:niceid}
n_2 = n_1 -4\mu^4-34\mu^3 - 106\mu^2 - 168\mu -16q^2t - 16q^2 -4\mu^2q - 32{\mu}q 
\\
- 16qt^2 - 4\mu^2qt -32{\mu}qt - 112qt- 96q - 2\mu^2t^2 - 16{\mu}t^2 - 40t^2 - 4\mu^4t- 34\mu^3t 
\\
- 108\mu^2t- 184\mu{t} - 192t  -152.
\end{multline}
From the above identity, we see that $n_1 \leq 0$ implies $n_2 < 0$, and thus the claim~\eqref{EQ:onechangeofsign} holds for $k \geq 2$. 

By Lemma~\ref{LEM:firsthypergeometric}, we see that $\sum_{k = 0}^{[m/2]} A_k =0$. Combined with the fact that the sequence $\{A_k\}_{k=0}^{[m/2]}$ has no more than one change of sign, we conclude that $A_{[m/2]} > 0$. 
Then, it follows from Lemma~\ref{LEM:keylemma} that the claim of the theorem holds for $\alpha=\beta=1$.  According to Lemma~\ref{LEM:extension}, we established discrete logarithmic concavity (convexity for log-convex $\{h_k\}$) for all shifts $\alpha,\beta\in\bN$.

Similar approach can be applied to verify that the sequence  $\{\tilde{A}_k\}_{k = 0}^{[m/2]}$  has no more than one change of sign for $(\alpha, \beta)\in\{(1,2), (1,3)\}$. The problem still amounts to proving that  $n_1 \leq 0$ implies $n_2 < 0$ with  $q \geq 0, t \geq 0$, and  $\mu \geq 0$, where $n_1$ and $n_2$ are certain polynomials in $q, t$, 
and~$\mu$. However, in those two cases, the relation connecting $n_2$ with $n_1$ has terms of opposite signs, unlike~\eqref{EQ:niceid}. Instead, we utilized Cylindrical Algebraic Decomposition~\cite{Collins1975, Kauers2011} as implemented in {\tt Mathematica} to prove the desired inequalities. Detailed calculations can be found in the supplemental material~\cite{YZ2023}.
Finally, coefficient-wise logarithmic concavity (convexity for log-convex $\{h_k\}$) for shifts $\alpha+\beta\le4$ follows from Lemma~\ref{LEM:extensionCW}.
\end{proof}

\section{The series in the functions $\phi_k=(\mu)_k/(2\mu)_k$} \label{SEC:tps}

In this section we will show that the formal power series 
\begin{equation} \label{EQ:secondconcavity}
y(\mu; x) := \sum_{k = 0}^\infty y_k \frac{(\mu)_{k}}{(2 \mu)_k k!} x^k
\end{equation}
is  coefficient-wise log-convex  for any non-negative sequence $\{y_k\}_{k = 0}^\infty$ independent of $\mu$. 

\begin{theorem} \label{THM:logconvexitysechyper}
Suppose $y$ is defined in \eqref{EQ:secondconcavity} with any non-negative sequence $\{y_k\}_{k = 0}^\infty$ independent of $\mu$. Then for any $\alpha,\beta>0$ the generalized Tur\'{a}nian
\begin{equation*}
\Delta_{y}(\alpha, \beta; x)= y(\mu + \alpha; x) y(\mu + \beta; x) - y(\mu; x) y(\mu + \alpha + \beta; x) = \sum_{m = 2}^\infty \bar{\phi}_m x^m
\end{equation*}
has negative power series coefficients $\bar{\phi}_m < 0$, so that the function $\mu \mapsto y(\mu; x)$ is coefficient-wise log-convex for $\mu>0$. 
\end{theorem}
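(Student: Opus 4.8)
The plan is to follow the strategy of Theorems~\ref{THM:logconvexitysecondhyper}--\ref{THM:discretelogconcavity}. Write $\phi_k(\mu)=(\mu)_k/(2\mu)_k$, so that $y(\mu;x)=\sum_{k\ge0}y_k\phi_k(\mu)x^k/k!$ and
\[
\Delta_y(\alpha,\beta;x)=\sum_{m\ge2}\bar{\phi}_m x^m,\qquad
\bar{\phi}_m=\sum_{k=0}^{m}y_ky_{m-k}M_k,\qquad
M_k=\frac{\phi_k(\mu+\alpha)\phi_{m-k}(\mu+\beta)-\phi_k(\mu)\phi_{m-k}(\mu+\alpha+\beta)}{k!\,(m-k)!}.
\]
Gauss pairing gives $\bar{\phi}_m=\sum_{k=0}^{[m/2]}y_ky_{m-k}A_k$ with $A_k=M_k+M_{m-k}$ for $k<m/2$ and $A_{m/2}=M_{m/2}$. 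Since $\{y_k\}$ is merely non-negative, Lemma~\ref{LEM:keylemma} is unavailable; instead the goal is to show directly that $A_k\le0$ for every $k$, every $\mu>0$ and all $\alpha,\beta>0$, so that $\bar{\phi}_m$ is a sum of non-positive terms. (The pairing is essential: the individual $M_k$ need not be non-positive, e.g.\ $M_0=\tfrac1{m!}\bigl(\phi_m(\mu+\beta)-\phi_m(\mu+\alpha+\beta)\bigr)>0$ because $\phi_m$ is decreasing in $\mu$.)

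The first ingredient is the uniform log-convexity of the single factor: for each fixed $k$ the function $\mu\mapsto\phi_k(\mu)$ is log-convex on $(0,\infty)$. Indeed $\log\phi_k(\mu)=\sum_{i=0}^{k-1}\bigl(\log(\mu+i)-\log(2\mu+i)\bigr)$, hence
\[
\bigl[\log\phi_k(\mu)\bigr]''=\sum_{i=0}^{k-1}\left(\frac{4}{(2\mu+i)^2}-\frac{1}{(\mu+i)^2}\right)\ge0,
\]
because $4(\mu+i)^2\ge(2\mu+i)^2$ is equivalent to $i\ge0$, with strictness once $k\ge2$. (Equivalently, $\phi_k(\mu)=\mathbb{E}[T^k]$ for $T$ with the symmetric $\mathrm{Beta}(\mu,\mu)$ law, and one is asserting log-convexity of these moments in $\mu$.) In particular every Turánian $\phi_j(\mu+\alpha)\phi_j(\mu+\beta)-\phi_j(\mu)\phi_j(\mu+\alpha+\beta)$ is $\le0$, which already settles the diagonal term $A_{m/2}=M_{m/2}$.

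The core step treats $A_k$ for $k<m/2$. Put $p=k$, $q=m-k$, set $c=2\mu+\alpha+\beta$, and let $u(s):=\phi_p(s)\phi_q(c-s)$ and $g(s):=u(s)+u(c-s)$. A direct substitution (using $c-\mu=\mu+\alpha+\beta$ and $c-\mu-\alpha=\mu+\beta$) shows $k!\,(m-k)!\,A_k=g(\mu+\alpha)-g(\mu)$. Now $\log u(s)=\log\phi_p(s)+\log\phi_q(c-s)$ is convex in $s$, being the sum of a convex function and a convex function precomposed with an affine map; hence $u$ is log-convex, therefore convex, therefore $g$ is convex. Moreover $g$ is symmetric about $s=c/2$, and a convex function symmetric about a point is a non-decreasing function of the distance to that point; since
\[
\bigl|\,(\mu+\alpha)-\tfrac c2\,\bigr|=\tfrac{|\alpha-\beta|}{2}\le\tfrac{\alpha+\beta}{2}=\bigl|\,\mu-\tfrac c2\,\bigr|,
\]
we conclude $g(\mu+\alpha)\le g(\mu)$, \ie\ $A_k\le0$. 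Summing over $k$ yields $\bar{\phi}_m\le0$ for every $m\ge2$, and $\bar{\phi}_m<0$ as soon as the sequence is non-degenerate, i.e.\ some $y_ky_{m-k}>0$ with $\max(k,m-k)\ge2$; this gives the asserted coefficient-wise log-convexity.

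I do not expect a serious obstacle once the key structural observation is in place, namely that $A_k$ is, up to a positive constant, the increment along the anti-diagonal line $s+t=c$ of the symmetric function $g$, so that the whole matter collapses to one-variable convexity. The two points needing care are: (i) the uniform-in-$k$ log-convexity of $\phi_k$ (the short computation above, which relies specifically on the factor $2\mu$ in the denominator); and (ii) the Gauss-pairing bookkeeping together with the precise non-degeneracy hypothesis on $\{y_k\}$ required to pass from $\bar{\phi}_m\le0$ to the strict inequality $\bar{\phi}_m<0$ stated in the theorem.
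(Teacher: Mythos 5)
Your proof is correct, but it takes a genuinely different route from the paper's. The paper proves $\tilde A_k<0$ for $k<m/2$ by establishing the four explicit inequalities $u_k<s_k$, $v_k<s_k$, $r_k<s_k$ and $u_kv_k<r_ks_k$ through term-by-term comparison of the factors of the Pochhammer ratios, and then invoking Lemma~\ref{LEM:positivity} with $s_k$ as the maximal element; the diagonal case $k=m/2$ is handled by the analogous factor-wise inequality \eqref{EQ:inequality1}. You instead isolate a single structural fact --- each $\phi_k(\mu)=(\mu)_k/(2\mu)_k$ is log-convex in $\mu$, your second-derivative computation being right since the $i$-th term equals $i(4\mu+3i)/\bigl((\mu+i)^2(2\mu+i)^2\bigr)\ge0$ --- and reduce $\tilde A_k$ to the increment $g(\mu+\alpha)-g(\mu)$ of the symmetrized function $g(s)=u(s)+u(c-s)$ with $u(s)=\phi_p(s)\phi_q(c-s)$ and $c=2\mu+\alpha+\beta$. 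I checked the bookkeeping: $u(\mu+\alpha)=u_k$, $u(\mu+\beta)=v_k$, $u(\mu)=r_k$, $u(\mu+\alpha+\beta)=s_k$, so indeed $\tilde A_k=g(\mu+\alpha)-g(\mu)$, and convexity of $g$ together with its symmetry about $c/2$ and $|\alpha-\beta|\le\alpha+\beta$ finishes the job; strictness for $k<m/2$, $m\ge2$ follows because $q=m-k\ge2$ makes $\phi_q$ strictly log-convex and $|\alpha-\beta|<\alpha+\beta$. Your argument buys conceptual clarity and strictly more generality: it shows that coefficient-wise log-convexity for an arbitrary non-negative $\{y_k\}$ follows from log-convexity of $\mu\mapsto\phi_k(\mu)$ alone, which also subsumes the paper's remark on the series $\sum f_n\Gamma(\mu+rn)x^n$ and $\sum f_n/(\mu)_{rn}x^n$; the paper's computation is more elementary but tied to the specific form of $(\mu)_k/(2\mu)_k$. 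You are also right that the stated conclusion $\bar\phi_m<0$ tacitly requires the mild non-degeneracy of $\{y_k\}$ that you spell out.
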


\begin{proof}
By direct computation, we have
\[
\bar{\phi}_m = \sum_{k = 0}^m y_k y_{m - k} \bar{M}_k, 
\]
where 
\begin{equation*}
\bar{M}_k = \frac{1}{k! (m - k)!} \left[\frac{(\mu + \alpha)_k (\mu + \beta)_{m - k}}{(2 (\mu + \alpha))_k (2 (\mu + \beta))_{m - k}} 
 - 
\frac{(\mu)_k (\mu + \alpha + \beta)_{m - k}}{(2 \mu)_k (2 (\mu + \alpha + \beta))_{m - k}} \right].
\end{equation*}
Pairing the terms with indices $k$ and $m-k$, we may write $\bar{\phi}_m$ in the form
\begin{equation} \label{EQ:phi}
\bar{\phi}_m = \sum_{k = 0}^{[m/2]} y_k y_{m - k} \bar{A}_k, 
\end{equation}
where $\bar{A}_k = \bar{M}_k + \bar{M}_{m - k}$ for $k < m/2$ and $\bar{A}_k = \bar{M}_k$ for $k = m/2$. 
We set $\tilde{A}_k = k! (m - k)! \bar{A}_k$. 
Then
\[ 
 \tilde{A}_k =
\begin{cases} 
      \underbrace{\frac{(\mu + \alpha)_{k} (\mu + \beta)_{m - k}}{(2 (\mu + \alpha))_k (2 (\mu + \beta))_{m - k}}}_{u_k} 
      +   \underbrace{\frac{(\mu + \alpha)_{m - k} (\mu + \beta)_{k}}{(2 (\mu + \alpha))_{m - k} (2 (\mu + \beta))_{k}}}_{v_k} \\
      -\underbrace{\frac{(\mu)_{k} (\mu + \alpha + \beta)_{m - k}}{(2 \mu)_k (2 (\mu + \alpha + \beta))_{m - k}}}_{r_k} 
      -   \underbrace{\frac{(\mu)_{m - k} (\mu + \alpha + \beta)_{k}}{(2 \mu)_{m - k} (2 (\mu + \alpha + \beta))_{k}}}_{s_k} \text{ if } \quad k < m/2, \\
      \dfrac{(\mu + \alpha)_{k} (\mu + \beta)_{k}}{(2 (\mu + \alpha))_k (2 (\mu + \beta))_{k}}-  \dfrac{(\mu)_{k} (\mu + \alpha + \beta)_{k}}{(2 \mu)_k (2 (\mu + \alpha + \beta))_{k}}\quad \text{ if } \quad k = m/2. 
   \end{cases}
\]
By~\eqref{EQ:phi}, it suffices to prove that:   
\begin{itemize}
\item [(i)]$\tilde{A}_k \leq 0$ for $k = m/2$, where the equality holds iff $k = 0, 1$; 
\item [(ii)] $\tilde{A}_k < 0$ for $0\le k<m/2$.
\end{itemize}
We first prove (i). If $k = 0, 1$, then it is straightforward to verify that $\tilde{A}_k = 0$. If $k \geq 2$, then $\tilde{A}_k < 0$
is equivalent to 
\begin{equation} \label{EQ:inequality1}
\frac{(\mu + \alpha)_k (\mu + \beta)_k}{(\mu)_k (\mu + \alpha + \beta)_k} < \frac{(2 (\mu + \alpha))_k (2 (\mu + \beta))_k}{(2 \mu)_k (2 (\mu + \alpha + \beta))_k},  
\end{equation}
which is true because 
\begin{multline*}
 \frac{(2 (\mu + \alpha) + i) (2 (\mu + \beta) + i)}{(2 \mu + i) (2 (\mu + \alpha + \beta) + i)} - \frac{(\mu + \alpha + i) (\mu + \beta + i)}{(\mu + i) (\mu + \alpha + \beta + i)} \\
 =\frac{i \alpha \beta (3 i + 4 \mu + 2 \alpha + 4 \beta)}{(\mu + i) (2 \mu + i) (\mu + \alpha + \beta + i) (2 (\mu + \alpha + \beta) + i)} > 0 \quad \text{ for } \quad i > 0. 
\end{multline*}
Next, we prove (ii). In this case, we see that $u_k < s_k$ is equivalent to
\[
\frac{(\mu + \alpha)_k (\mu + \beta)_{m - k}}{(\mu)_{m - k} (\mu + \alpha + \beta)_k} < \frac{(2 (\mu + \alpha))_k (2 (\mu + \beta))_{m - k}}{(2 \mu)_{m - k} (2 (\mu + \alpha + \beta))_k}.
\]
In the light of~\eqref{EQ:inequality1} and $(\mu+\beta)_{m-k}=(\mu+\beta)_{k} (\mu +\beta+k)_{m - 2k}$ for $2k<m$, it suffices to show that 
\[
\frac{(\mu + \beta + k)_{m - 2 k}}{(\mu + k)_{m - 2 k}} < \frac{(2 (\mu + \beta) + k)_{m - 2 k}}{(2 \mu + k)_{m - 2 k}},
\]
which holds true because 
\[
\frac{2 (\mu + \beta) + i}{2 \mu + i} - \frac{\mu + \beta + i}{\mu + i}  = \frac{i \beta}{(\mu + i) (2 \mu + i)} > 0 \quad \text{ for } \quad  i > 0.
\]
Similarly, we can show that the inequalities $v_k < s_k, r_k < s_k$ and $u_k v_k < r_k s_k$ hold for $0\le k<m/2$. Thus, it follows from Lemma~\ref{LEM:positivity} 
that 
$$\tilde{A}_k  = u_k + v_k - r_k - s_k < 0.$$
\end{proof}

Numerical experiments with the "reciprocal" series 
\[
\tilde{y}(\mu; x) := \sum_{k = 0}^\infty y_k \frac{(2 \mu)_{k}}{(\mu)_k k!} x^k
\]
and the related gamma series 
\[
\hat{y}(\mu; x) := \sum_{k = 0}^\infty y_k \frac{\Gamma(\mu+k)}{\Gamma(2\mu+k)k!} x^k
\]
provide strong evidence for the following conjecture.
\begin{conjecture}
Suppose that $\{y_k\}_{k = 0}^\infty$ is a log-concave sequence. Then the functions $\mu\to \tilde{y}(\mu; x)$ and $\mu\to \hat{y}(\mu; x)$  are  coefficient-wise log-concave on $\mu>0$.
\end{conjecture}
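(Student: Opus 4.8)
The plan is to mimic the proofs of Theorems~\ref{THM:discretelogconcavity} and~\ref{THM:logconvexitysechyper}, treating $\tilde y$ and $\hat y$ in parallel since their coefficient functions are ratios of Pochhammer symbols (equivalently, of Gamma functions) whose arguments run through arithmetic progressions. Fix $\alpha,\beta>0$ and write the $x^m$-coefficient of $\Delta_{\tilde y}(\alpha,\beta;x)$ as $\sum_{k=0}^m y_ky_{m-k}\tilde M_k$; Gauss pairing turns this into $\sum_{k=0}^{[m/2]}y_ky_{m-k}A_k$ with $A_k=\tilde M_k+\tilde M_{m-k}$ for $k<m/2$, and, after multiplying by the appropriate factorials, $\tilde A_k=u_k+v_k-r_k-s_k$ with $u_k,v_k,r_k,s_k$ explicit products of quotients of Pochhammer symbols exactly as in the proof of Theorem~\ref{THM:logconvexitysechyper}. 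Since $\{y_k\}$ is log-concave, Lemma~\ref{LEM:keylemma} reduces the whole theorem to two claims, to be proved for every $m\ge2$: (a) $\sum_{k=0}^{[m/2]}A_k\ge0$, and (b) the sequence $\tilde A_0,\dots,\tilde A_{[m/2]}$ changes sign at most once. The same scheme applies to $\hat y$ with $u_k,v_k,r_k,s_k$ replaced by their Gamma-function analogues.

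Claim (a) is the ``constant-sequence'' case: $\sum_{k=0}^{[m/2]}A_k$ is the $x^m$-coefficient of $\Delta_{\bar\lambda}(\alpha,\beta;x)$, where $\bar\lambda(\mu;x)=\sum_k\frac{(2\mu)_k}{(\mu)_kk!}x^k={}_1F_1(2\mu;\mu;x)$ (and, for $\hat y$, of $\Delta_{\hat\lambda}$ with $\hat\lambda(\mu;x)=\frac{\Gamma(\mu)}{\Gamma(2\mu)}{}_1F_1(\mu;2\mu;x)$). Thus (a) is precisely coefficient-wise log-concavity of the base hypergeometric function --- itself the $y_k\equiv1$ instance of the conjecture, so a genuine sub-problem rather than something one can bootstrap from Lemma~\ref{LEM:keylemma}. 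I would attack it via closed forms, integral representations or contiguous relations of ${}_1F_1(2\mu;\mu;x)$, seeking either an explicit factorization of $\Delta_{\bar\lambda}$ in the spirit of~\eqref{EQ:logconcavitypsi3}, or a representation of $\bar\lambda$ as an average of functions that are log-concave in $\mu$. One structural remark: unlike the series of Section~\ref{SEC:sps}, putting $y_k\equiv C$ here does \emph{not} give a log-neutral function --- a direct computation shows the $x^2$-coefficient of $\Delta_{\bar\lambda}$ equals $\frac1{\mu+1}+\frac1{\mu+\alpha+\beta+1}-\frac1{\mu+\alpha+1}-\frac1{\mu+\beta+1}>0$ --- so $\sum A_k>0$ strictly and no log-convex/log-concave duality is available, which is exactly why the conjecture asserts only the log-concave direction. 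For $\hat y$ there is in addition a shortcut to the merely \emph{pointwise} statement: since $\hat y(\mu;x)=\frac{\Gamma(\mu)}{\Gamma(2\mu)}\,y(\mu;x)$ with $y$ the coefficient-wise log-convex series of Theorem~\ref{THM:logconvexitysechyper} and $\mu\mapsto\Gamma(\mu)/\Gamma(2\mu)$ is log-concave because $\psi'(\mu)-4\psi'(2\mu)=-\sum_{j\ge0}(\mu+j+\tfrac12)^{-2}<0$, pointwise log-concavity of $\hat y$ follows once $(\log y)''(\mu)\le\sum_{j\ge0}(\mu+j+\tfrac12)^{-2}$ is established; but this weaker route does not reach the claimed coefficient-wise conclusion.

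The combinatorial heart, and the step I expect to be the main obstacle, is (b). Following Theorem~\ref{THM:discretelogconcavity}, I would compute the ratios $u_{k-1}/u_k$, $v_{k-1}/v_k$, $r_{k-1}/r_k$, $s_{k-1}/s_k$ --- each a rational function of bounded degree in $\mu,\alpha,\beta,k,m$ because all arguments move in arithmetic progression --- then set $q=k-2$, $t=m-2k$ (so $q,t\ge0$), clear denominators, and reduce the implication ``$\tilde A_k\le0\Rightarrow\tilde A_{k-1}<0$'' to a polynomial implication ``$n_1\le0\Rightarrow n_2<0$'' in the nonnegative variables $q,t,\mu$ with $\alpha,\beta>0$ as parameters. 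One then hopes either for a fortunate cancellation $n_2=n_1-(\text{manifestly nonnegative})$ of the type~\eqref{EQ:niceid}, or --- when, as seems likely here given the simultaneous presence of $\mu$ and $2\mu$ in the arguments, that identity carries terms of mixed sign --- for a certification by Cylindrical Algebraic Decomposition, exactly as was used for the shift pairs $(1,2)$ and $(1,3)$ in the proof of Theorem~\ref{THM:discretelogconcavity}. The degenerate indices $k=0,1$, where several Pochhammer symbols collapse, must be verified separately; there $\tilde A_0,\tilde A_1$ are rational in $\mu,\alpha,\beta$ and should factor with a transparent sign, as $\tilde A_0$ did in that theorem. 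The Gamma-function version $\hat y$ needs a slightly different algebraic normalization --- the quotient $(2\mu)_k/(2\mu+2\alpha)_k$ does not telescope against $(\mu)_k/(\mu+\alpha)_k$ as cleanly as in the Pochhammer case --- but the reduction to a fixed polynomial implication, and then to CAD, goes through the same way. If (b) for arbitrary real $\alpha,\beta$ proves intractable, a natural fallback, matching the partial results already in the paper, is to prove only a $\Delta(1,1)\ge0$ base case and then invoke Lemma~\ref{LEM:extension} for integer shifts and Lemma~\ref{LEM:extensionCW} for the coefficient-wise statement when $\alpha+\beta\le4$.
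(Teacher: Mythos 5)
This statement is a \emph{conjecture} in the paper: the authors state immediately after it that ``So far, we have been unable to prove this claims.'' There is therefore no proof in the paper to compare against, and any complete argument here would be new mathematics.

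Your proposal is not such an argument; it is a strategy outline in which both pivotal steps are explicitly deferred. After the Gauss pairing and the reduction via Lemma~\ref{LEM:keylemma}, everything hinges on (a) the nonnegativity of $\sum_{k=0}^{[m/2]}A_k$, i.e.\ coefficient-wise log-concavity of the base series with $y_k\equiv1$, and (b) the at-most-one-sign-change property of $\tilde A_0,\dots,\tilde A_{[m/2]}$. For (a) you offer only a list of possible tools (``closed forms, integral representations or contiguous relations''), and the function ${}_1F_1(2\mu;\mu;x)$ admits no factorization analogous to~\eqref{EQ:logconcavitypsi3} or to the index law of Lemma~\ref{LEM:firsthypergeometric}, which is precisely what makes this case hard; your check of the $x^2$-coefficient is correct but is evidence, not proof. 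For (b) you describe the ratio/CAD machinery of Theorem~\ref{THM:discretelogconcavity} but do not carry it out, and you yourself flag it as ``the main obstacle'' --- note also that CAD can only certify the implication $n_1\le0\Rightarrow n_2<0$ for \emph{fixed} shift pairs or after quantifying over $\alpha,\beta$, which is exactly where the authors' own attempt stalled. The fallback you mention (a $\Delta(1,1)$ base case plus Lemmas~\ref{LEM:extension} and~\ref{LEM:extensionCW}) would, even if completed, yield only log-concavity for integer shifts and coefficient-wise log-concavity for $\alpha+\beta\le4$, which is strictly weaker than the conjectured statement. So the gap is not a single missing step but the entire substantive content of the claim.

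Your peripheral observations are sound: the $x^2$-coefficient computation for $\bar\lambda$ is correct (it follows from convexity of $z\mapsto(z+1)^{-1}$), the absence of a log-convex/log-concave duality here is correctly diagnosed, and the remark that $\hat y(\mu;x)=\bigl(\Gamma(\mu)/\Gamma(2\mu)\bigr)y(\mu;x)$ with a log-concave prefactor is a reasonable, if inconclusive, route to the pointwise statement. None of this, however, closes the conjecture.
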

So far, we have been unable to prove this claims.

\section{Applications} \label{SEC:app}
In this section we give several examples of concrete special functions whose logarithmic concavity/convexity in parameters can be established using the results of Sections~\ref{SEC:mps},~\ref{SEC:sps}, and~\ref{SEC:tps}.  Our emphasis is on special functions important in fractional calculus. 

\subsection{Generalized hypergeometric function}\label{subsec:gen-hypergeometric}
The generalized hypergeometric function is
defined by the series
\begin{equation}\label{eq:pFqdefined}
{_{p}F_q}\left(\left.\!\!\begin{array}{c} a_1,a_2,\ldots,a_p\\
b_1,b_2,\ldots,b_q\end{array}\right|z\!\right):=\sum\limits_{n=0}^{\infty}\frac{(a_1)_n(a_2)_n\cdots(a_{p})_n}{(b_1)_n(b_2)_n\cdots(b_q)_nn!}z^n.
\end{equation}
The series (\ref{eq:pFqdefined}) converges in the entire complex plane if
$p\leq{q}$ and in the unit disk if $p=q+1$.  In the latter case
its sum can be extended analytically to the whole complex plane
cut along the ray $[1,\infty)$ \cite[Chapter~2]{AAR}. Applications of the previous results to the
generalized hypergeometric functions are mainly based on the
following lemma.
\begin{lemma}\label{lm:HVVKS}
Denote by $e_k(x_1,\ldots,x_q)$  the $k$-th elementary symmetric
polynomial,
$$
e_0(x_1,\ldots,x_q)=1,~~~e_k(x_1,\ldots,x_q)=\!\!\!\!\!\!\!\!\sum\limits_{1\leq{j_1}<{j_2}\cdots<{j_k}\leq{q}}
\!\!\!\!\!\!\!\!x_{j_1}x_{j_2}\cdots{x_{j_k}},~~k\geq{1}.
$$
Suppose that $q\geq{1}$ and $0\leq{r}\leq{q}$ are integers, $a_i>0$,
$i=1,\ldots,q-r$, $b_i>0$, $i=1,\ldots,q$, and
\begin{equation}\label{eq:symmetric-chain1}
\frac{e_q(b_1,\ldots,b_q)}{e_{q-r}(a_1,\ldots,a_{q-r})}\leq
\frac{e_{q-1}(b_1,\ldots,b_q)}{e_{q-r-1}(a_1,\ldots,a_{q-r})}\leq\cdots
\leq\frac{e_{r+1}(b_1,\ldots,b_q)}{e_{1}(a_1,\ldots,a_{q-r})}\leq
e_{r}(b_1,\ldots,b_q).
\end{equation}
Then the sequence of hypergeometric terms \emph{(}if $r=q$ the
numerator is $1$\emph{)},
$$
f_n=\frac{(a_1)_n\cdots(a_{q-r})_n}{(b_1)_n\cdots(b_q)_n},~n=0,1,\ldots,
$$
is log-concave \ie, $0<f_{n-1}f_{n+1}\leq{f_n^2}$, $n=1,2,\ldots$.
The inequality is strict \emph{(}\ie $\{f_n\}_{n\ge0}$ is strictly log-concave\emph{)} unless $r=0$ and $a_i=b_i$ for
$i=1,\ldots,q$.
\end{lemma}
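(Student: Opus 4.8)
The plan is to reduce the log-concavity of the hypergeometric term $f_n$ to a chain of inequalities among consecutive ratios $f_{n+1}/f_n$, and then to recognize those as exactly the inequalities encoded in the symmetric-chain hypothesis~\eqref{eq:symmetric-chain1}. First I would compute the ratio
\[
\frac{f_{n+1}}{f_n}=\frac{(a_1+n)(a_2+n)\cdots(a_{q-r}+n)}{(b_1+n)(b_2+n)\cdots(b_q+n)}.
\]
Log-concavity $f_{n-1}f_{n+1}\le f_n^2$ for all $n\ge1$ is equivalent to the statement that $n\mapsto f_{n+1}/f_n$ is nonincreasing on $\bN_0$. Writing $R(n)$ for the numerator polynomial $\prod_{i=1}^{q-r}(a_i+n)=\sum_{j=0}^{q-r}e_{q-r-j}(a_1,\ldots,a_{q-r})\,n^{j}$ and $S(n)$ for the denominator polynomial $\prod_{i=1}^{q}(b_i+n)=\sum_{j=0}^{q}e_{q-j}(b_1,\ldots,b_q)\,n^{j}$, this amounts to showing $R(n)/S(n)$ is nonincreasing in $n$ on the nonnegative integers.

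The key step is to prove the stronger statement that $R(x)/S(x)$ is nonincreasing as a function of the real variable $x\ge0$. The standard route is to show that $S(x)R(x+1)\le R(x)S(x+1)$, or equivalently — after clearing the positive factors — that a certain polynomial in $x$ with coefficients built from the $e_k$'s has no sign change on $x\ge0$. A cleaner way, which I expect to work, is to observe that $R(x)/S(x)=\prod_{i=1}^{q-r}(a_i+x)/\prod_{i=1}^{q}(b_i+x)$ can be analyzed multiplicatively: the logarithmic derivative is $\sum_{i}1/(a_i+x)-\sum_{i}1/(b_i+x)$, and one wants this $\le 0$ for $x\ge0$. However, this pointwise condition is \emph{not} implied by~\eqref{eq:symmetric-chain1} in general, so instead I would follow the combinatorial approach of the original source \cite{KSJMAA2010} and compare $R$ and $S$ coefficient-by-coefficient after a suitable normalization. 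Concretely, the interlacing chain~\eqref{eq:symmetric-chain1} is precisely the condition that guarantees $R(x)\cdot S(0) - R(0)\cdot$(shift of $S$) type expressions stay one-signed; expanding $S(x)R(x+1)-R(x)S(x+1)$ in powers of $x$, each coefficient is a sum of products $e_i(b)e_j(a)-e_{i'}(b)e_{j'}(a)$ with $i+j=i'+j'$ and $i'<i$, and monotonicity of the ratio chain forces each such coefficient to be $\le 0$.

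The main obstacle will be the bookkeeping in that last step: verifying that every coefficient of $S(x)R(x+1)-R(x)S(x+1)$, when grouped correctly, is a nonnegative combination of the differences $e_{k}(b)e_{q-r-\ell}(a) - e_{k+1}(b)e_{q-r-\ell-1}(a)$ made nonnegative by~\eqref{eq:symmetric-chain1} (after rescaling the $a$-part elementary symmetric functions). One has to be careful about the index ranges, about the two regimes $r<q$ and $r=q$ (in the latter the numerator is constant, $R\equiv1$, and one simply needs $S(x)$ to be log-convex as a polynomial, which follows since its roots $-b_i$ are real), and about tracking where strictness enters: the difference is identically zero exactly when $R$ and $S$ are proportional, i.e.\ $r=0$ and $\{a_i\}=\{b_i\}$; otherwise at least one coefficient is strictly negative, giving strict log-concavity for $n$ large enough, and then the supported-on-an-interval part of Definition~\ref{DEF:doublypositiveseq} is automatic since all $f_n>0$. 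I would present the argument by first disposing of the $r=q$ case, then handling $r<q$ via the coefficient comparison, and finally extracting strictness from the structure of the vanishing case.
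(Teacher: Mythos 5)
The paper does not actually prove this lemma: it is imported from \cite[Theorem~4.4]{HVV} and \cite[Lemma~2]{KSJMAA2010}, so your attempt should be measured against the argument given there. That argument is the ``monotone rule'' for ratios of polynomials: writing $R(x)=\prod_i(a_i+x)=\sum_j c_jx^j$ with $c_j=e_{q-r-j}(a)$ and $S(x)=\prod_i(b_i+x)=\sum_j d_jx^j$ with $d_j=e_{q-j}(b)$, the chain \eqref{eq:symmetric-chain1} says precisely that $j\mapsto c_j/d_j$ is nonincreasing (it is positive for $j\le q-r$ and then $0$), whence $R'S-RS'=\sum_{j<k}(j-k)(c_jd_k-c_kd_j)x^{j+k-1}\le0$ term by term on $[0,\infty)$, so $R/S$ is nonincreasing there and $f_{n+1}/f_n=R(n)/S(n)$ decreases. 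In particular, your reason for rejecting the logarithmic-derivative route is mistaken: the pointwise inequality $\sum_i 1/(a_i+x)\le\sum_i 1/(b_i+x)$ on $[0,\infty)$ is \emph{equivalent} to $R'S-RS'\le0$ there, and, as just shown, it \emph{is} implied by \eqref{eq:symmetric-chain1}. This is the shortest correct proof and is essentially the one in the cited sources; your discrete coefficient comparison is a genuinely different (and also workable) route, but it is not forced on you.

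Your route does close, and the ``bookkeeping obstacle'' you flag is resolved by the pairing $(j,k)\leftrightarrow(k,j)$: for $j<k$ the paired contribution to $S(x)R(x+1)-R(x)S(x+1)$ equals $(c_kd_j-c_jd_k)\,x^j(x+1)^j\bigl[(x+1)^{k-j}-x^{k-j}\bigr]$, where the bracket has nonnegative coefficients and the prefactor is $\le0$ by transitivity in \eqref{eq:symmetric-chain1} (legitimate since every $e_k(b)$ and $e_j(a)$ appearing there is strictly positive). Two genuine gaps remain. First, your strictness argument only delivers strictness ``for $n$ large enough,'' while the lemma asserts $f_{n-1}f_{n+1}<f_n^2$ for \emph{every} $n\ge1$; you must verify that the value at $x=0$, namely the constant term $\sum_{k\ge1}(c_kd_0-c_0d_k)$, is strictly negative outside the exceptional case. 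It is: for $r>0$ the $k=q$ summand is $-e_{q-r}(a)<0$, and for $r=0$ it is $e_q(b)-e_q(a)$, which can vanish only if all inequalities in \eqref{eq:symmetric-chain1} are equalities, forcing $e_k(a)=e_k(b)$ for all $k$ and hence $R=S$, i.e.\ $\{a_i\}=\{b_i\}$; once the constant term is negative, negativity at all $x\ge0$ follows from the nonpositivity of the remaining coefficients. Second, your treatment of $r=q$ is garbled: real-rootedness of $S$ gives log-\emph{concavity} of $S$ as a function, not log-convexity, and neither is relevant --- all that case needs is that $S$ is increasing on $[0,\infty)$, which is obvious; in any event it is subsumed by the general pairing argument with $c_j=0$ for $j\ge1$, so no separate case is required.
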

The proof of this lemma for $r=0$ can be found in
\cite[Theorem~4.4]{HVV} and \cite[Lemma~2]{KSJMAA2010}. The latter
reference also explains how to extend the proof to general $r$
(see the last section of \cite{KSJMAA2010}). A simpler sufficient condition for \eqref{eq:symmetric-chain1} and thus for log-concavity of $\{f_{n}\}_{n\ge0}$ is given in the following lemma \cite[Lemma~4]{KKAN2017}.
\begin{lemma}\label{lm:major-elementary}
 Suppose that
 \begin{equation}\label{eq:majorization}
 \sum\nolimits_{j=1}^{k}b_{n_j}\le \sum\nolimits_{j=1}^{k}a_j~\text{for}~k=1,2,\ldots, q-r    
 \end{equation}
 for some $(q-r)$-dimensional sub-vector $(b_{n_1},\ldots,b_{n_{q-r}})$ of $(b_1,\ldots, b_q)$. Then inequalities \eqref{eq:symmetric-chain1} hold true.
\end{lemma}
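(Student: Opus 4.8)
The plan is to prove the chain \eqref{eq:symmetric-chain1} directly. Cross-multiplying consecutive ratios, \eqref{eq:symmetric-chain1} is equivalent to the $q-r$ inequalities
\[
\frac{e_j(b_1,\dots,b_q)}{e_{j-1}(b_1,\dots,b_q)}\ \le\ \frac{e_{j-r}(a_1,\dots,a_{q-r})}{e_{j-r-1}(a_1,\dots,a_{q-r})},\qquad j=r+1,\dots,q,
\]
with the conventions $e_0=1$ and $e_s=0$ for $s<0$. The only external ingredient I would use is Newton's inequality: for positive reals the sequence $e_0,e_1,\dots$ is log-concave, so $s\mapsto e_s/e_{s-1}$ is non-increasing. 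I read the hypothesis with both $(b_{n_1},\dots,b_{n_{q-r}})$ and $(a_1,\dots,a_{q-r})$ arranged in non-decreasing order, so that $\sum_{j\le k}b_{n_j}\le\sum_{j\le k}a_j$ refers to the $k$ smallest entries of each; this normalisation is needed, as for the largest partial sums the conclusion can fail.

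First I would reduce to the case $r=0$. Put $b'=(b_{n_1},\dots,b_{n_{q-r}})$ and let $b''$ collect the remaining $r$ entries of $b$. Factoring $\prod_{i=1}^q(t+b_i)$ as the product over the sub-vector times the product over its complement gives the convolution identity $e_j(b)=\sum_l e_{j-l}(b')\,e_l(b'')$. Since $s\mapsto e_s(b')/e_{s-1}(b')$ is non-increasing, every ratio $e_{j-l}(b')/e_{j-1-l}(b')$ occurring in these sums is $\le e_{j-r}(b')/e_{j-r-1}(b')$, so the elementary estimate $\bigl(\sum_l p_l\bigr)/\bigl(\sum_l q_l\bigr)\le\max_l p_l/q_l$ (for $q_l\ge0$) yields $e_j(b)/e_{j-1}(b)\le e_{j-r}(b')/e_{j-r-1}(b')$. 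Hence it suffices to prove $e_i(b')/e_{i-1}(b')\le e_i(a)/e_{i-1}(a)$ for $i=1,\dots,q-r$, which is precisely the lemma for the two equal-length vectors $b'$ and $a$, i.e.\ the case $r=0$.

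So I must show, for $b,a\in\bR_{>0}^{m}$ (with $m=q-r$) satisfying $\sum_{j\le k}b_j\le\sum_{j\le k}a_j$ for all $k$ with both non-decreasing, that $F_i(x):=e_i(x)/e_{i-1}(x)$ obeys $F_i(b)\le F_i(a)$. Two properties of $F_i$ follow from Newton's inequality, denoting by $\hat x$ the coordinates held fixed in each case. (a) $F_i$ is non-decreasing in each coordinate: freezing all but one coordinate, $F_i$ becomes a M\"obius function of the free variable whose derivative has the sign of $e_{i-1}(\hat x)^2-e_{i-2}(\hat x)e_i(\hat x)\ge0$. (b) $F_i$ is Schur-concave: under a $T$-transform on a pair of coordinates, with their sum $s$ held fixed and their product $p$ raised toward equality, $F_i$ becomes a M\"obius function of $p$ whose derivative has the sign of $s\bigl(e_{i-2}(\hat x)^2-e_{i-3}(\hat x)e_{i-1}(\hat x)\bigr)+\bigl(e_{i-2}(\hat x)e_{i-1}(\hat x)-e_{i-3}(\hat x)e_i(\hat x)\bigr)\ge0$, hence $F_i$ increases under every such transfer. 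The hypothesis says exactly that $-a$ is weakly submajorized by $-b$, so by the Marshall--Olkin characterisation of weak majorization there is a vector $e$ with $e\le a$ coordinatewise (after sorting) and $e\prec b$ in ordinary majorization. Then $F_i(b)\le F_i(e)$ by (b) and $F_i(e)\le F_i(a)$ by (a), which is what we need; combining with the first step proves \eqref{eq:symmetric-chain1}.

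I expect the main obstacles to be these. First, orientation: the superficially natural form of the hypothesis — weak submajorization of the largest partial sums — is false, so one must work with the smallest partial sums, and correspondingly the chain in the last step has to be closed in the order $F_i(b)\le F_i(e)\le F_i(a)$; the reverse ordering would go the wrong way. Second, while (a) is immediate, the Schur-concavity (b) of $e_i/e_{i-1}$ is the one point where a genuine, if short, computation with Newton's inequalities is unavoidable; the small-$i$ cases, where $e_{i-2}(\hat x)$ or $e_{i-3}(\hat x)$ can vanish, are covered by the conventions above but ought to be checked separately.
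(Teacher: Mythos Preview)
The paper does not supply its own proof of this lemma; it is quoted with a citation to \cite[Lemma~4]{KKAN2017}, so there is no in-paper argument to compare against.

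Your argument is correct. The reduction to $r=0$ via the convolution $e_j(b)=\sum_l e_{j-l}(b')e_l(b'')$ and the mediant bound is clean; the positivity checks go through because for $j\in\{r+1,\dots,q\}$ the index $l=r$ always lies in the support of the denominator sequence, and any $l$ with $e_{j-1-l}(b')=0$ also has $e_{j-l}(b')=0$. In the $r=0$ step, the monotonicity and Schur-concavity of $F_i=e_i/e_{i-1}$ are verified as you indicate: the numerator of $\partial_pF_i$ is exactly $s\bigl(e_{i-2}(\hat x)^2-e_{i-3}(\hat x)e_{i-1}(\hat x)\bigr)+\bigl(e_{i-2}(\hat x)e_{i-1}(\hat x)-e_{i-3}(\hat x)e_i(\hat x)\bigr)$, and both brackets are nonnegative by the log-concavity of $(e_k)_{k\ge0}$ and the resulting monotonicity of $e_k/e_{k-1}$. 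The majorization step also closes: from $(-a)\prec_w(-b)$ one obtains $e$ with $e\prec b$ and $e\le a$, and $e\prec b$ with $b>0$ forces $e>0$ (as a doubly stochastic image of a positive vector), so $F_i$ stays well-defined along $F_i(b)\le F_i(e)\le F_i(a)$.

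Your caveat about the ordering is not just cosmetic but essential. As literally written, \eqref{eq:majorization} can hold for a decreasing arrangement of the $a_j$ while \eqref{eq:symmetric-chain1} fails: with $q=2$, $r=0$, $b=(1,3)$, $a=(4,\tfrac12)$ one has $1\le4$ and $4\le\tfrac92$, yet $e_2(b)/e_2(a)=\tfrac32>\tfrac89=e_1(b)/e_1(a)$. So the lemma is only true under the non-decreasing reading you adopt, which is presumably the convention in the cited source.
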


In view of the obvious fact that reciprocals of the elements of a positive log-concave sequence form a log-convex sequence, an application of Theorems~\ref{THM:logconvexitysecondhyper} and \ref{THM:logconcavitysecondhyper} leads immediately to the following statement.
\begin{theorem}
Let $0\leq{p}\leq{q}$ be integers and suppose that positive parameters $(a_1,\ldots,a_p)$, $(b_1,\ldots,b_q)$ satisfy \eqref{eq:symmetric-chain1} or \eqref{eq:majorization} with $r=q-p$. Then the function 
$$
\mu\to{_{q+2}F_{q+1}}(\mu/2,\mu/2+1/2,b_{1},\ldots,b_{q};1/2,a_{1},\ldots,a_{q};x)
$$
is coefficient-wise log-convex for $\mu>0$, and the function 
$$
\mu\to(\mu)_2\cdot{_{p+2}F_{q+1}}(\mu/2+1,\mu/2+3/2,a_{1},\ldots,a_{p};3/2,b_{1},\ldots,b_{q};x)
$$
with $p\le q$ is coefficient-wise log-concave for $\mu>0$.
\end{theorem}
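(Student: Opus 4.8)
The plan is to view each of the two functions in the statement as an instance of the series treated in Section~\ref{SEC:mps}, with the generic sequence chosen to be a ratio of Pochhammer symbols, and then to invoke Theorem~\ref{THM:logconvexitysecondhyper} and Theorem~\ref{THM:logconcavitysecondhyper}, respectively. The only non-routine ingredient is the log-concavity of the hypergeometric term
\[
f_n=\frac{(a_1)_n\cdots(a_p)_n}{(b_1)_n\cdots(b_q)_n}\qquad(n\ge0),
\]
which is exactly the conclusion of Lemma~\ref{lm:HVVKS} under hypothesis~\eqref{eq:symmetric-chain1} with $r=q-p$; if instead the majorization hypothesis~\eqref{eq:majorization} is assumed, then Lemma~\ref{lm:major-elementary} shows that \eqref{eq:symmetric-chain1} holds, so in either case $\{f_n\}_{n\ge0}$ is a strictly positive log-concave sequence independent of $\mu$. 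The first step is simply to record this.

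For the coefficient-wise log-convexity assertion I would pass to the reciprocal sequence $g_n:=1/f_n=(b_1)_n\cdots(b_q)_n/\bigl((a_1)_n\cdots(a_p)_n\bigr)$, which is log-convex because the reciprocal of a strictly positive log-concave sequence is log-convex (the elementary remark recalled just before the theorem). Substituting $\{g_n\}_{n\ge0}$ into the series $g(\mu;x)=\sum_{n\ge0}g_n\,(\mu/2)_n\bigl((\mu+1)/2\bigr)_n/\bigl((1/2)_n\,n!\bigr)\,x^n$ of Theorem~\ref{THM:logconvexitysecondhyper} and collecting the Pochhammer factors, one checks that $g(\mu;x)$ is precisely the generalized hypergeometric series displayed first in the statement. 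Theorem~\ref{THM:logconvexitysecondhyper} then yields its coefficient-wise log-convexity in $\mu$ for $\mu>0$.

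For the coefficient-wise log-concavity assertion I would instead feed $\{f_n\}$ itself into Theorem~\ref{THM:logconcavitysecondhyper}: set $\hat f_n:=f_{n-1}$ for $n\ge1$, and note that log-concavity of $\{f_n\}_{n\ge0}$ is the same as log-concavity of $\{\hat f_n\}_{n\ge1}$. By the second displayed identity in Theorem~\ref{THM:logconcavitysecondhyper}, the resulting series equals $x\,(\mu)_2\sum_{n\ge0}f_n\,(\mu/2+1)_n(\mu/2+3/2)_n/\bigl((3/2)_n\,n!\bigr)\,x^n$, and after collecting Pochhammer factors this is $x$ times the function $(\mu)_2\cdot{}_{p+2}F_{q+1}$ displayed in the statement. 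The extra factor $x$ only shifts every coefficient of the generalized Tur\'{a}nian by two and so does not change their signs; hence Theorem~\ref{THM:logconcavitysecondhyper} gives coefficient-wise log-concavity for $\mu>0$.

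I do not expect a genuine obstacle: once Lemma~\ref{lm:HVVKS} (or Lemma~\ref{lm:major-elementary}) has been applied, the remainder is a dictionary translation between the series of Section~\ref{SEC:mps} and the $ {}_{p}F_{q}$ notation, the only care needed being the bookkeeping of numerator and denominator parameters and the harmless overall factor $x$ in the second case. All the substantive content of the theorem is carried by the two preceding theorems together with the fact that ratios of rising factorials are log-concave under~\eqref{eq:symmetric-chain1}.
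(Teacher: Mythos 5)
Your proposal is correct and takes essentially the same route as the paper's own (one-sentence) argument: Lemma~\ref{lm:HVVKS} (or Lemma~\ref{lm:major-elementary}) gives log-concavity of the hypergeometric term, the reciprocal sequence is log-convex, and Theorems~\ref{THM:logconvexitysecondhyper} and~\ref{THM:logconcavitysecondhyper} are applied verbatim, with the harmless extra factor $x$ handled exactly as you say. Your identification of the first series as ${}_{q+2}F_{p+1}$ with denominator parameters $1/2,a_1,\ldots,a_p$ is the intended reading; the statement's ``${}_{q+2}F_{q+1}(\ldots;1/2,a_1,\ldots,a_q;x)$'' is a misprint when $p<q$.
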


Theorem~\ref{THM:discretelogconcavity} yields the following statement with $\Delta_f$ defined in \eqref{eq:genTur-defined}.

\begin{theorem}
Let $0\leq{p}\leq{q}$ be integers and suppose that positive parameters $(a_1,\ldots,a_p)$, $(b_1,\ldots,b_q)$ satisfy \eqref{eq:symmetric-chain1} or \eqref{eq:majorization} with $r=p-q$. Then the function
$$
f(\mu;x)={_{p+2}F_{q+1}}(\mu/2,\mu/2+1/2,a_{1},\ldots,a_{p};\mu+1,b_{1},\ldots,b_{q};x)
$$
satisfies the Tur\'{a}n inequality $\Delta_f(\alpha,\beta;x)>0$ for all $\alpha,\beta\in\mathbb{N}$, and the  coefficients at all powers of $x$ in $\Delta_f(\alpha,\beta;x)$ are non-negative if $\alpha+\beta\le4$ .
The function 
$$
\hat{f}(\mu;x)={_{q+2}F_{q+1}}(\mu/2,\mu/2+1/2,b_{1},\ldots,b_{q};\mu+1, a_{1},\ldots,a_{q};x)
$$
satisfies the reverse Tur\'{a}n inequality  $\Delta_{\hat{f}}(\alpha,\beta;x)<0$ for all $\alpha,\beta\in\mathbb{N}$, and the  coefficients at all powers of $x$ in $\Delta_{\hat{f}}(\alpha,\beta;x)$ are non-positive if $\alpha+\beta\le4$ .
\end{theorem}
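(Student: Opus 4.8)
The plan is to recognize both $f$ and $\hat f$ as instances of the series \eqref{EQ:firstconcavity} and then to quote Theorem~\ref{THM:discretelogconcavity}; the rest is bookkeeping. First I would apply the duplication formula $(\mu)_{2k}=4^{k}(\mu/2)_{k}((\mu+1)/2)_{k}$ — the very identity already used on the right-hand side of \eqref{EQ:firstconcavity} — together with $\mu/2+1/2=(\mu+1)/2$, to write
\[
f(\mu;x)=\sum_{k=0}^{\infty}h_{k}\,\frac{(\mu)_{2k}}{(\mu+1)_{k}\,k!}\,x^{k},\qquad h_{k}=4^{-k}f_{k},\quad f_{k}:=\frac{(a_{1})_{k}\cdots(a_{p})_{k}}{(b_{1})_{k}\cdots(b_{q})_{k}},
\]
and, in the same way, $\hat f(\mu;x)=\sum_{k\ge0}\hat h_{k}(\mu)_{2k}x^{k}/((\mu+1)_{k}k!)$ with $\hat h_{k}=4^{-k}/f_{k}$. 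Since the geometric sequence $\{4^{-k}\}$ is at once log-concave and log-convex, multiplying or dividing a sequence by it preserves both properties; hence $\{h_{k}\}$ is log-concave exactly when $\{f_{k}\}$ is, while $\{\hat h_{k}\}$ is log-convex exactly when $\{f_{k}\}$ is log-concave (the reciprocal of a positive log-concave sequence being log-convex).

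Next I would prove that $\{f_{k}\}$ is log-concave. As $f_{k}$ carries $p$ rising-factorial factors in the numerator and $q$ in the denominator, Lemma~\ref{lm:HVVKS} applies with $r=q-p$ (so that $q-r=p$): under the chain hypothesis \eqref{eq:symmetric-chain1}, written for this value of $r$, the sequence $\{f_{k}\}$ is log-concave, and strictly so unless $p=q$ and $a_{i}=b_{i}$ for all $i$; if instead the majorization \eqref{eq:majorization} holds, then Lemma~\ref{lm:major-elementary} first yields \eqref{eq:symmetric-chain1} and Lemma~\ref{lm:HVVKS} again gives the conclusion.

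Finally I would invoke Theorem~\ref{THM:discretelogconcavity}. With $\{h_{k}\}$ log-concave it gives $\Delta_{f}(\alpha,\beta;x)\ge0$ for all $\alpha,\beta\in\bN$ and non-negativity of every power-series coefficient of $\Delta_{f}(\alpha,\beta;x)$ when moreover $\alpha+\beta\le4$; with $\{\hat h_{k}\}$ log-convex it gives the reversed statements for $\hat f$. To sharpen $\ge0$ to the strict inequality $\Delta_{f}(\alpha,\beta;x)>0$, I would use the data recorded in the proof of Theorem~\ref{THM:discretelogconcavity}: in the expansion $\Delta_{f}(1,1;x)=\sum_{m\ge2}\delta_{m}x^{m}$ with $\delta_{m}=\sum_{k=0}^{[m/2]}h_{k}h_{m-k}A_{k}$, one has $\sum_{k}A_{k}=0$ and $A_{0}<0$ for every $m\ge2$, so for $m=2$, $\delta_{2}=h_{0}h_{2}A_{0}+h_{1}^{2}A_{1}=(-A_{0})(h_{1}^{2}-h_{0}h_{2})>0$ whenever $\{h_{k}\}$ is strictly log-concave; hence $\Delta_{f}(1,1;x)\ge\delta_{2}x^{2}>0$ for $x>0$ and all $\mu\ge0$, and since the induction in the proof of Lemma~\ref{LEM:extension} only multiplies inequalities between positive quantities it preserves strictness and delivers $\Delta_{f}(\alpha,\beta;x)>0$ for all $\alpha,\beta\in\bN$. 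The argument for $\hat f$ is verbatim the same. In the single excluded case ($p=q$, $a_{i}=b_{i}$ for all $i$) the function $f$ reduces to $\lambda(\mu;x/4)$ in the notation of Section~\ref{SEC:sps}, which is log-neutral by the index law of Lemma~\ref{LEM:firsthypergeometric}, so $\Delta_{f}\equiv0$ there (and likewise $\hat f$).

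I do not expect a genuine obstacle: the statement is a direct specialization of Theorem~\ref{THM:discretelogconcavity}. The only points requiring care are the parameter bookkeeping — the numerator of $f_{k}$ has $p$ factors, so Lemma~\ref{lm:HVVKS} is invoked with $r=q-p$, and when $p<q$ the series $\hat f$ should be read as the corresponding ${}_{q+2}F_{p+1}$ — and the routine passage from Lemma~\ref{LEM:keylemma} and Lemma~\ref{LEM:extension} to their strict forms, which is immediate from inspection of their proofs once $\{f_{k}\}$ is known to be strictly log-concave.
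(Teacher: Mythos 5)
Your proposal is correct and follows essentially the same route as the paper, which derives this theorem directly by specializing Theorem~\ref{THM:discretelogconcavity} to $h_k=4^{-k}\prod_i(a_i)_k/\prod_j(b_j)_k$ (respectively its reciprocal) and invoking Lemma~\ref{lm:HVVKS} with $r=q-p$ for the log-concavity of the term ratio. Your additional care over the strictness of $\Delta_f(\alpha,\beta;x)>0$ (via $\delta_2>0$ and the multiplicativity of strict inequalities in Lemma~\ref{LEM:extension}) and over the degenerate log-neutral case $p=q$, $a_i=b_i$ supplies details the paper leaves implicit.
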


Finally, Theorem~\ref{THM:logconvexitysechyper} implies that the function 
\[
\mu\to{_{p+1}F_{q+1}}(\mu,a_{1},\ldots,a_{p};2\mu, b_{1},\ldots,b_{q};x)
\]
is coefficient-wise log-convex for $\mu>0$ and any positive parameters $a_i,b_j$.

\subsection{Parameter derivatives of hypergeometric functions}
For arbitrary positive numbers $a,b,c$, define the sequence $\{h_n\}_{n\ge0}$ by 
$$
h_n(a,b,c)=\frac{(\psi(c+n)-\psi(c))(a)_n}{(b)_n},
$$
where $\psi(z)=\Gamma'(z)/\Gamma(z)$.
If $a\geq{b}$, then the sequence  $\{h_n\}_{n\ge0}$ is log-concave since
\begin{multline*}
h_n^2-h_{n-1}h_{n+1}=\frac{(a)_{n-1}(a)_{n}}{(b)_{n-1}(b)_{n}}
\Big\{\frac{a+n-1}{b+n-1}(\psi(c+n)-\psi(c))^2
\\
-\frac{a+n}{b+n}(\psi(c+n-1)-\psi(c))(\psi(c+n+1)-\psi(c))\Big\}>0.
\end{multline*}
The last inequality holds because $y\to{\psi(c+y)-\psi(c)}$ is
concave according to the Gauss formula~\cite[Theorem~1.6.1]{AAR}
$$
(\psi(c+y)-\psi(c))''_y=\psi''(c+y)=-\int\limits_{0}^{\infty}\frac{t^2e^{-t(c+y)}}{1-e^{-t}}dt<0
$$
and hence is log-concave and since $(a+n-1)/(b+n-1)>(a+n)/(b+n)$ for
$a\geq{b}$.  In view of the relations
$$
\frac{\partial}{\partial{a}}\frac{(a)_n}{(b)_n}=h_n(a,b,a),~~~~\frac{\partial}{\partial{b}}\frac{(a)_n}{(b)_n}=-h_n(a,b,b),
$$
these observations combined with Theorem~\ref{THM:logconcavitysecondhyper} lead to the following statement. 
\begin{theorem}
Suppose $a\ge{b}>0$. Then the functions 
$$
\mu\to\frac{\partial}{\partial{a}}{}_3F_2\left(\begin{matrix}
a,\mu/2+1,\mu/2+3/2
\\
b,3/2
\end{matrix};x\right)
$$
$$
\mu\to-\frac{\partial}{\partial{b}}{}_3F_2\left(\begin{matrix}
a,\mu/2+1,\mu/2+3/2
\\
b,3/2
\end{matrix};x\right)
$$
are coefficient-wise log-concave on $(0,\infty)$.
\end{theorem}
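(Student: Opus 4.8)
The plan is to reduce the statement about parameter derivatives of the hypergeometric function to a direct application of Theorem~\ref{THM:logconcavitysecondhyper}. First I would observe that for fixed positive parameters $a,b$ with $a\ge b$, the function
$$
\mu\to\frac{\partial}{\partial a}{}_3F_2\!\left(\begin{matrix}a,\mu/2+1,\mu/2+3/2\\ b,3/2\end{matrix};x\right)
$$
can be written as a power series in $x$ whose $n$-th coefficient is the product of $h_n(a,b,a)$ (independent of $\mu$) and the $\mu$-dependent hypergeometric term $(\mu/2+1)_n(\mu/2+3/2)_n/[(3/2)_n n!]$. Using the identity $(\mu)_{2n}=4^n(\mu/2)_n((\mu+1)/2)_n$ together with the shift $\mu\mapsto\mu+2$, one checks that $(\mu/2+1)_n(\mu/2+3/2)_n/[(3/2)_n n!]$ is a constant multiple of $(\mu+2)_{2n}/[(2n-1)!]$ divided by the harmless factor $x(\mu+2)_2$; more precisely, the series matches the second representation of $f(\mu;x)$ in Theorem~\ref{THM:logconcavitysecondhyper} after the reindexing and the substitution $\mu\to\mu+2$, with $f_{n+1}$ replaced by $h_n(a,b,a)$. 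Since termwise differentiation of a convergent power series in $x$ is legitimate inside the disk of convergence, the interchange of $\partial/\partial a$ with the summation over $n$ poses no difficulty.

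The key step is then to verify that the sequence $\{h_n(a,b,a)\}_{n\ge0}$ is log-concave, which has already been done in the text: the displayed computation of $h_n^2-h_{n-1}h_{n+1}$ together with concavity of $y\mapsto\psi(c+y)-\psi(c)$ (from the Gauss integral formula for $\psi''$) and the monotonicity $(a+n-1)/(b+n-1)>(a+n)/(b+n)$ valid for $a\ge b$ gives strict log-concavity. The same argument applies verbatim to $\{h_n(a,b,b)\}_{n\ge0}$, which handles the second function after noting $\partial/\partial b\,[(a)_n/(b)_n]=-h_n(a,b,b)$, so that $-\partial/\partial b$ of the ${}_3F_2$ is again a series of the form treated in Theorem~\ref{THM:logconcavitysecondhyper} with the log-concave coefficient sequence $\{h_n(a,b,b)\}$. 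Because Theorem~\ref{THM:logconcavitysecondhyper} requires only that the coefficient sequence be log-concave and independent of $\mu$, both conclusions follow immediately.

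I expect the main obstacle to be purely bookkeeping: getting the index shifts and the normalization constants exactly right so that the two series genuinely coincide with the canonical form $x(\mu)_2\sum_{n\ge0}f_{n+1}(\mu/2+1)_n(\mu/2+3/2)_n/[(3/2)_n n!]x^n$ appearing in Theorem~\ref{THM:logconcavitysecondhyper}. One must be careful that the parameter of the theorem is the shifted variable $\mu+2$ rather than $\mu$, but since coefficient-wise log-concavity is a property of the function $\mu\mapsto f(\mu;x)$ that is preserved under the affine reparametrization $\mu\mapsto\mu+2$ (it merely restricts the region $\mu\ge0$ to $\mu\ge-2$, which still contains $(0,\infty)$), this causes no loss. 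Once the identification is made, there is nothing left to prove, so there is no genuine analytic difficulty beyond the two log-concavity verifications already carried out in the surrounding text.
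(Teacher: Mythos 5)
Your proposal follows the same route as the paper, which offers no argument beyond combining the log-concavity of $\{h_n(a,b,a)\}$ and $\{h_n(a,b,b)\}$ (established in the displayed computation just before the theorem) with Theorem~\ref{THM:logconcavitysecondhyper}; identifying the coefficient sequences via $\partial_a[(a)_n/(b)_n]=h_n(a,b,a)$ and $\partial_b[(a)_n/(b)_n]=-h_n(a,b,b)$ is exactly what the paper does. Two bookkeeping corrections, however. First, no substitution $\mu\to\mu+2$ is needed: the second displayed form of $f(\mu;x)$ in Theorem~\ref{THM:logconcavitysecondhyper} already reads $x(\mu)_2\sum_{n\ge0}f_{n+1}(\mu/2+1)_n(\mu/2+3/2)_n x^n/[(3/2)_n\,n!]$ at the parameter $\mu$ itself, so the series in question is $f(\mu;x)/(x(\mu)_2)$ with $f_{n+1}=h_n$; your shifted version would produce the arguments $\mu/2+2$ and $\mu/2+5/2$. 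Second, and more substantively, the factor $(\mu)_2$ is not ``harmless'': it is strictly log-concave in $\mu$ (since $(\mu+\alpha)(\mu+\beta)-\mu(\mu+\alpha+\beta)=\alpha\beta>0$), and while coefficient-wise log-concavity of $G(\mu;x)$ would transfer to $x(\mu)_2G(\mu;x)$, the implication you need goes the opposite way and fails in general. What Theorem~\ref{THM:logconcavitysecondhyper} actually delivers is coefficient-wise log-concavity of $\mu\mapsto(\mu)_2\,\partial_a\,{}_3F_2(\cdots)$, consistent with the first theorem of Section~\ref{SEC:app}, which does retain the $(\mu)_2$ prefactor; the present statement omits it, and your proposal inherits that gap from the statement rather than closing it.
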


In a similar fashion, Theorem~\ref{THM:discretelogconcavity} yields
\begin{theorem}
Suppose $a\ge{b}>0$. Then the functions 
$$
F_1(\mu;x)=\frac{\partial}{\partial{a}}{}_3F_2\left(\begin{matrix}
a,\mu/2,\mu/2+1/2
\\
b,\mu+1
\end{matrix};x\right)
$$
$$
F_2(\mu;x)=-\frac{\partial}{\partial{b}}{}_3F_2\left(\begin{matrix}
a,\mu/2,\mu/2+1/2
\\
b,\mu+1
\end{matrix};x\right)
$$
satisfy the Tur\'{a}n type inequality 
$$
F_i(\mu+\alpha;x)F_i(\mu+\beta;x)-F_i(\mu;x)F_i(\mu+\alpha+\beta;x)\ge0,~i=1,2,
$$ 
 for each $\mu,x>0$ and $\alpha,\beta\in\bN$. Moreover, the coefficients at all powers of $x$ in the above expression are non-negative if, furthermore,  $\alpha+\beta\le4$. 
\end{theorem}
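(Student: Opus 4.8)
The plan is to recognise $F_1$ and $F_2$ as instances of the master series \eqref{EQ:firstconcavity} with a log-concave coefficient sequence and then to invoke Theorem~\ref{THM:discretelogconcavity}, in complete parallel with the reduction carried out just before the statement. First I would differentiate the defining series \eqref{eq:pFqdefined} of ${}_3F_2(a,\mu/2,\mu/2+1/2;b,\mu+1;x)$ term by term --- legitimate inside its disk of convergence, where the differentiated series converges locally uniformly --- and use the identities $\partial_a[(a)_n/(b)_n]=h_n(a,b,a)$ and $-\partial_b[(a)_n/(b)_n]=h_n(a,b,b)$ recorded above, together with the duplication formula $(\mu/2)_n(\mu/2+1/2)_n=(\mu)_{2n}/4^n$. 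This gives
$$
F_i(\mu;x)=\sum_{n=0}^{\infty}h_n^{(i)}\,\frac{(\mu)_{2n}}{(\mu+1)_n\,n!}\Bigl(\tfrac{x}{4}\Bigr)^{n},\qquad h_n^{(1)}:=h_n(a,b,a),\quad h_n^{(2)}:=h_n(a,b,b),
$$
so that $F_i(\mu;x)=h(\mu;x/4)$ in the notation of \eqref{EQ:firstconcavity}, the coefficient sequence $\{h_n^{(i)}\}_{n\ge0}$ being independent of $\mu$.

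Next I would check that, under the hypothesis $a\ge b>0$, each of $\{h_n(a,b,a)\}_{n\ge0}$ and $\{h_n(a,b,b)\}_{n\ge0}$ is log-concave in the sense of Definition~\ref{DEF:doublypositiveseq}. The computation establishing log-concavity of $\{h_n(a,b,c)\}$ given above already yields $(h_n^{(i)})^2\ge h_{n-1}^{(i)}h_{n+1}^{(i)}$ for every $n\ge1$; the only point to observe is that $h_0^{(i)}=\psi(c)-\psi(c)=0$ with $c\in\{a,b\}$ while all later terms are strictly positive, so the sequence is supported on the integer interval $\{1,2,\dots\}$ and the defining conditions of Definition~\ref{DEF:doublypositiveseq} are met.

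It then remains to apply Theorem~\ref{THM:discretelogconcavity} to $h(\mu;\cdot)$ with $\{h_k\}=\{h_k^{(i)}\}$, which gives $\Delta_h(\alpha,\beta;y)\ge0$ for all $\alpha,\beta\in\bN$ and $\mu,y>0$, and non-negativity of every $x$-power coefficient of $\Delta_h(\alpha,\beta;\cdot)$ when in addition $\alpha+\beta\le4$. Substituting $y=x/4$ yields $\Delta_{F_i}(\alpha,\beta;x)=\Delta_h(\alpha,\beta;x/4)\ge0$ for all $\mu,x>0$ and $\alpha,\beta\in\bN$; and since the $k$-th Taylor coefficient in $x$ of $\Delta_{F_i}(\alpha,\beta;x)$ equals $4^{-k}$ times the corresponding coefficient of $\Delta_h(\alpha,\beta;y)$, the coefficient-wise assertion for $\alpha+\beta\le4$ carries over verbatim. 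I do not expect a genuine obstacle: the substance is entirely supplied by Theorem~\ref{THM:discretelogconcavity} and by the preceding log-concavity computation, and the only thing requiring care is the elementary bookkeeping in the first paragraph --- the duplication-formula rewriting of $(\mu/2)_n(\mu/2+1/2)_n$ and the harmless rescaling $x\mapsto x/4$.
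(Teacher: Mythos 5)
Your proposal is correct and follows exactly the route the paper intends: rewrite the parameter derivative via $(\mu/2)_n(\mu/2+1/2)_n=(\mu)_{2n}/4^n$ as an instance of $h(\mu;x/4)$ from \eqref{EQ:firstconcavity} with coefficients $h_n(a,b,a)$ or $h_n(a,b,b)$, use the log-concavity of $\{h_n(a,b,c)\}$ for $a\ge b$ established just above the theorem, and invoke Theorem~\ref{THM:discretelogconcavity}. Your remark that $h_0^{(i)}=0$ still fits Definition~\ref{DEF:doublypositiveseq} is a correct and welcome detail that the paper leaves implicit.
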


Theorem~\ref{THM:logconvexitysechyper} leads to 
\begin{theorem}
Suppose $a,b,c>0$. Then the functions 
$$
\mu\to\frac{\partial}{\partial{a}}{}_3F_2\left(\begin{matrix}
a,b,\mu
\\
c,2\mu
\end{matrix};x\right),~~~~~~~\mu\to-\frac{\partial}{\partial{b}}{}_3F_2\left(\begin{matrix}
a,b,\mu
\\
c,2\mu
\end{matrix};x\right)
$$
are coefficient-wise log-convex on $(0,\infty)$.
\end{theorem}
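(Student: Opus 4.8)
The plan is to derive this statement directly from Theorem~\ref{THM:logconvexitysechyper} by recognizing the parameter derivatives of ${}_3F_2$ as instances of the series $y(\mu;x)$ in \eqref{EQ:secondconcavity} for a suitably chosen non-negative sequence $\{y_k\}$. First I would compute the termwise derivatives. Writing
\[
{}_3F_2\left(\!\begin{matrix} a,b,\mu\\ c,2\mu\end{matrix};x\right)=\sum_{k=0}^\infty \frac{(a)_k(b)_k}{(c)_k k!}\cdot\frac{(\mu)_k}{(2\mu)_k}x^k,
\]
the coefficient $\phi_k(\mu)=(\mu)_k/(2\mu)_k$ is exactly the function appearing in Section~\ref{SEC:tps}, while the prefactor $(a)_k(b)_k/((c)_k k!)$ is independent of $\mu$. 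Differentiating in $a$ gives
\[
\frac{\partial}{\partial a}\frac{(a)_k(b)_k}{(c)_k k!}=\frac{(a)_k(b)_k}{(c)_k k!}\bigl(\psi(a+k)-\psi(a)\bigr)=:y_k^{(a)},
\]
and since $\psi(a+k)\ge\psi(a)$ for $k\ge0$ (with equality only at $k=0$) by monotonicity of the digamma function, the sequence $\{y_k^{(a)}\}_{k\ge0}$ is non-negative. Similarly, $-\partial/\partial b$ of the $\mu$-independent prefactor equals $\frac{(a)_k(b)_k}{(c)_k k!}(\psi(b+k)-\psi(b))=:y_k^{(b)}$, again non-negative. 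Note that differentiation in $a$ or $b$ leaves the $\mu$-dependence untouched, so it commutes with the formal power series manipulations.

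Next I would observe that, because the series defining ${}_3F_2$ converges locally uniformly in $x$ for $|x|<1$ and its coefficients are smooth in the parameter $a$ (resp.\ $b$), differentiation under the summation sign is justified, giving
\[
\frac{\partial}{\partial a}{}_3F_2\left(\!\begin{matrix} a,b,\mu\\ c,2\mu\end{matrix};x\right)=\sum_{k=0}^\infty y_k^{(a)}\frac{(\mu)_k}{(2\mu)_k k!}x^k=y(\mu;x)
\]
in the notation of \eqref{EQ:secondconcavity}, with $y_k=y_k^{(a)}\ge0$, and analogously $-\partial/\partial b$ yields $y(\mu;x)$ with $y_k=y_k^{(b)}\ge0$. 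Theorem~\ref{THM:logconvexitysechyper} then applies verbatim: for any non-negative sequence $\{y_k\}$ independent of $\mu$, the function $\mu\mapsto y(\mu;x)$ is coefficient-wise log-convex for $\mu>0$. This immediately gives the claimed coefficient-wise log-convexity of both parameter-derivative functions on $(0,\infty)$.

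The only genuine point to verify carefully — and thus the main (though minor) obstacle — is the non-negativity of the derived sequences and the legitimacy of termwise differentiation. The non-negativity reduces to the elementary fact that $\psi$ is increasing on $(0,\infty)$, which follows from $\psi'(x)=\int_0^\infty t e^{-tx}/(1-e^{-t})\,dt>0$ as already noted in the excerpt; hence $\psi(a+k)-\psi(a)=\sum_{j=0}^{k-1}\frac{1}{a+j}\ge0$. Since coefficient-wise log-convexity is, as emphasized after \eqref{eq:genTur-defined}, a purely formal (coefficient-level) property, one does not even need convergence — it suffices to differentiate each coefficient, which is a finite operation. Once these routine checks are in place, the theorem is a direct corollary of Theorem~\ref{THM:logconvexitysechyper}, and no further work is required.
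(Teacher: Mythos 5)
Your proof is correct and matches the paper's (implicit) argument: the theorem is presented as a direct consequence of Theorem~\ref{THM:logconvexitysechyper}, which requires only that the $\mu$-independent coefficient sequence produced by $\partial/\partial a$ (resp.\ $-\partial/\partial b$) be non-negative, and this follows from the monotonicity of the digamma function exactly as you argue. One minor bookkeeping point: your $y_k^{(a)}$ carries an extra factor $1/k!$ relative to the normalization of \eqref{EQ:secondconcavity} (the $k!$ is already built into $y(\mu;x)$ there), so the correct choice is $y_k=(a)_k(b)_k\bigl(\psi(a+k)-\psi(a)\bigr)/(c)_k$; this changes nothing, since non-negativity is all that Theorem~\ref{THM:logconvexitysechyper} uses.
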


\subsection{$k$-Pochhammer and $k$-Gamma series}

In~\cite{DP2007}, the authors introduced for any $k>0$ and $n=0,1,\ldots$,  the $k$-Pochhammer symbol 
\begin{equation} \label{EQ:kPochhammer}
(x)_{n,k}=x(x + k)(x + 2k) \cdots (x + (n -1)k)
\end{equation}
and the $k$-Gamma function
$$
\Gamma_k(x) = \lim\limits_{n\to\infty}\frac{n!k^n(nk)^{x/k-1}}{(x)_{n,k}}=\int_0^{\infty}e^{-t^k/k}t^{x-1}dt.
$$
It is straightforward to verify that
$$
\Gamma_k(x)=k^{x/k-1}\Gamma(x/k),~~(x)_{n,k}=k^n(x/k)_{n}=\frac{\Gamma_k(x+nk)}{\Gamma_k(x)}.
$$
For each integer $r \geq 1$, and $k, \mu > 0$, we set 
\begin{align*}
g_k(\mu; x) & := \sum_{n=0}^{\infty}g_n\frac{(\mu)_{2n,k}}{(2n)!}x^n=\sum_{n=0}^\infty g_n \frac{(\mu/k)_{2n}}{(2n)!}(k^2x)^n,
\\
f_k(\mu; x) &  :=\sum_{n=1}^{\infty}f_n \frac{(\mu)_{2n,k}}{(2n-1)!}x^n=\sum_{n=1}^\infty f_n \frac{(\mu/k)_{2n}}
{(2n-1)!}(k^2x)^n,
\\
y_k(\mu; x) & := \sum_{n=0}^\infty y_n \Gamma_k(\mu+krn)x^n
=k^{\mu/k-1}\sum_{n=0}^\infty y_n\Gamma(\mu/k+rn) [k^{r/k}x]^n, \\
h_k(\mu; x) & := \sum_{n = 0}^\infty \frac{h_n}{(\mu)_{rn,k}}x^n
= \sum_{n=0}^\infty \frac{h_n}{(\mu/k)_{rn}} \Big[\frac{x}{k^r}\Big]^n,
\\
q_k(\mu; x) &:= \sum_{n=0}^\infty q_n\frac{(\mu)_{2n,k}}{(\mu+k)_{n,k}n!} x^n
=\sum_{n=0}^\infty q_n\frac{(\mu/k)_{2n}}{(\mu/k+1)_{n}n!} (kx)^n,
\\
w_k(\mu; x) &:= \sum_{n=0}^\infty w_n\frac{(\mu)_{n,k}}{(2\mu)_{n,k}n!} x^n
=\sum_{n=0}^\infty w_n\frac{(\mu/k)_{n}}{(2\mu/k)_{n}n!} x^n.
\end{align*}
The right hand sides of the above identities imply that we can extend immediately the results of this paper to the $k$-Pochhammer and $k$-Gamma series listed above.  Namely, if $\{ g_n \}_{n = 0}^\infty$ is log-convex and 
 $\{y_n\}_{n = 0}^\infty$,  $\{h_n\}_{n = 0}^\infty$ and $\{w_n\}_{n = 0}^\infty$ are positive, then  $g_k(\mu; x)$, $y_k(\mu; x)$, $h_k(\mu; x)$ and $w_k(\mu; x)$ 
are all coefficient-wise log-convex.  If $\{f_n\}$ and $\{q_n\}$ are log-concave, then $f_k(\mu; x)$ is coefficient-wise log-concave while $q_k(\mu; x)$ is discrete log-concave and coefficient-wise log-concave for integer shifts whose sum does not exceed $4$.  Natural applications of the above series are the so-called $k$-hypergeometric series similar to the classical hypergeometric series with Pochhammer's symbols substituted with $k$-Pochhammer's symbols.  These series can be easily expressed in terms of the classical hypergeometric series as follows:  
$$
{}_pF_{q,k}\left(\!\!\left.\begin{array}{c}a_1,\ldots,a_p\\b_1,\ldots,b_q\end{array}\right|x\right)
={}_pF_{q}\left(\!\!\left.\begin{array}{c}a_1/k,\ldots,a_p/k\\b_1/k,\ldots,b_q/k\end{array}\right|k^{p-q}x\right).
$$
This formula implies that the results of subsection~\ref{subsec:gen-hypergeometric} also hold for the $k$-hypergeometric functions.  

\subsection{The Fox-Wright function}

Given positive vectors $\mathbf{A}=(A_1,\ldots,A_p)$, $\mathbf{B}=(B_1,\ldots,B_q)$ and complex vectors $\a=(a_1,\ldots,a_p)$, $\b=(b_1,\ldots,b_q)$,
the Fox-Wright function (or ''the generalized Wright function'') is defined by the series \cite[(1)]{KPJMAA2020}
\begin{equation}\label{eq:pPsiqdefined}
{_{p}\Psi_q}\left(\left.\!\!\begin{array}{c}(a_1,A_1),\ldots,(a_p,A_p)\\(b_1,B_1),\ldots,(b_q,B_q)\end{array}\right|z\!\right)
={_{p}\Psi_q}\left(\left.\!\!\begin{array}{c}(\a,\mathbf{A})\\(\b,\mathbf{B})\end{array}\right|z\!\right)
=\sum\limits_{n=0}^{\infty}\frac{\Gamma(\mathbf{A} n+\a)}{\Gamma(\mathbf{B} n+\b)}\frac{z^n}{n!},
\end{equation}
where the shorthand notation $\Gamma(\mathbf{A}{n}+\a)=\prod_{j=1}^{p}\Gamma(A_{j}n+a_{j})$ (similarly for $\Gamma(\mathbf{B}{n}+\b)$) has been used.   The series \eqref{eq:pPsiqdefined} has a positive radius of convergence if
$$
\Delta:=\sum\nolimits_{j=1}^{q}B_j-\sum\nolimits_{i=1}^{p}A_i\ge-1.
$$
More precisely, if $\Delta>-1$ the series converges for all finite values of $z$ to an entire function, while for $\Delta=-1$, its radius of convergence equals $\rho$ defined in \eqref{eq:nec1} below, see details in \cite[(3)]{KPJMAA2020}. The above definition implies that for any $\theta>0$
\begin{equation}\label{eq:gFoxWright}
    g(\mu;x):=\sum\limits_{n=0}^{\infty}\frac{\Gamma(\mathbf{A} n+\a)}{\Gamma(\mathbf{B} n+\b)}\frac{(\mu)_{2n}}{(2n)!}\Big(\frac{x}{\theta}\Big)^n=\frac{\sqrt{\pi}}{\Gamma(\mu)}
{_{p+1}\Psi_{q+1}}\left(\left.\!\!\begin{array}{c}(\mu,2),(\a,\mathbf{A})\\(1/2,1),(\b,\mathbf{B})\end{array}\right|\frac{x}{4\theta}\!\right)
\end{equation}
and the condition $\sum B_i\ge\sum A_j$ guarantees that the radius of convergence is positive.  To apply Theorem~\ref{THM:logconvexitysecondhyper} we need conditions ensuring that the sequence 
\begin{equation}\label{eq:Vn}
V(n)=\frac{\Gamma(\mathbf{A} n+\a)}{\theta^n\Gamma(\mathbf{B} n+\b)}
\end{equation}
is log-convex.  As log-convexity is implied by complete monotonicity of the function $V(t)$ on $(0,\infty)$ we can apply conditions for complete monotonicity established in \cite{BCK2021,KPCMFT2016}.  According to \cite[Theorem~4]{KPCMFT2016} and  \cite[Theorem~3.2]{BCK2021}
the function $V(t)$ is logarithmically completely monotonic (and hence completely monotonic) if and only if
\begin{equation}\label{eq:nec1}
\sum\limits_{j=1}^{q}B_j=\sum\limits_{i=1}^{p}A_i,~~~\rho=\prod\limits_{i=1}^{p}A_i^{A_i}\prod\limits_{j=1}^{q}B_j^{-B_j}\le\theta
\end{equation}
and
\begin{equation}\label{eq:Pu-def}
P(u)=\sum\limits_{i=1}^{p}\frac{e^{-a_iu/A_i}}{1-e^{-u/A_i}}-\sum\limits_{i=1}^{q}\frac{e^{-b_iu/B_i}}{1-e^{-u/B_i}}\ge0~~\text{for
all}~u>0.
\end{equation}
Condition \eqref{eq:Pu-def} may be hard to verify. Sufficient conditions in terms of parameters  $\mathbf{A}$, $\mathbf{B}$, $\a,\b$ can be found in \cite[Theorem~5]{KPCMFT2016} and \cite[Theorems~3.4, 3.7, 3.10 and corollaries]{BCK2021}.  For example, \cite[Corollary~3.9]{BCK2021} implies that $\{V(n)\}_{n\ge0}$ is log-convex if \eqref{eq:nec1} holds true and 
\begin{equation}\label{eq:BCKcor3.9}
\sum_{k=1}^{q}B_k\ge \frac{B_j}{b_j-1}\sum_{k=1}^{p}a_k~\text{for}~j=1,\ldots,q.
\end{equation}
On the other hand, choosing $h_{ji}=\omega_i$ in \cite[Theorem~3.7]{BCK2021} we conclude that $\{V(n)\}_{n\ge0}$ is log-convex if together with \eqref{eq:nec1} we have  
\begin{equation}\label{eq:BCKtheor3.7}
A_i\ge\omega_i\sum_{j=1}^{q}B_j,~i=1,\ldots,p~~\text{and}~~b_j/B_j\ge\sum_{i=1}^{p}\omega_ia_i/A_i~,j=1,\ldots,q,
\end{equation}
for some weights $\omega_i\ge0$, $\sum_{i=1}^{p}\omega_i=1$. Another condition is provided by \cite[Theorem~3.10]{BCK2021}. 
Suppose that for some positive integers $\alpha_j$, $\beta_j$, $j=1,\ldots,p$, we have 
$$
A_{k}=1/\alpha_j~\text{for}~k=\alpha_{j-1}+1,\alpha_{j-1}+2,\ldots,\alpha_{j},~j=1,\ldots,p~\text{and}~\alpha_0=0,
$$ 
and, in a similar fashion, 
$$
B_m=1/\beta_j~\text{for}~m=\beta_{j-1}+1,\beta_{j-1}+2,\ldots,\beta_{j},~j=1,\ldots,p~\text{and}~\beta_0=0.
$$
Then for  $a\ge1$ and  $\theta=\prod_{j=1}^{p}(\beta_j/\alpha_j)$ we obtain that 
$$
V(n)=\theta^{-n}\prod_{j=1}^{p}\frac{\Gamma^{\alpha_j}(n/\alpha_j+a)}{\Gamma^{\beta_j}(n/\beta_j+a)}=\theta^{-n}\frac{\prod_{k=1}^{P}\Gamma(A_kn+a)}{\prod_{m=1}^{Q}\Gamma(B_mn+a)},
$$
where $P=\sum_{j=1}^{p}\alpha_j$, $Q=\sum_{j=1}^{p}\beta_j$, is log-convex if 
\begin{equation}\label{eq:BCKtheor3.10}
\begin{split}
0<\alpha_1\le\alpha_2\le\cdots\le\alpha_p, &~~~~0<\beta_1\le\beta_2\le\cdots\le\beta_p
\\
\sum_{j=1}^{k}\alpha_j\le\sum_{j=1}^{k}\beta_j, &~~~k=1,2,\ldots,p.
\end{split}
\end{equation}
We will summarize these facts in the following proposition
\begin{proposition}\label{prop:Vlc}
Suppose any of the following set of conditions holds: \eqref{eq:nec1}$+$\eqref{eq:Pu-def}, \eqref{eq:nec1}$+$\eqref{eq:BCKcor3.9}, \eqref{eq:nec1}$+$\eqref{eq:BCKtheor3.7} or \eqref{eq:BCKtheor3.10} with integer $\alpha_j$, $\beta_j$. Then the sequence $\{V(n)\}_{n\ge0}$ defined in \eqref{eq:Vn} is log-convex, so that $\{[V(n)]^{-1}\}_{n\ge0}$ is log-concave.
\end{proposition}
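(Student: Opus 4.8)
The plan is to reduce the entire statement to a single analytic fact: under each of the four hypotheses the interpolating function $t\mapsto V(t)=\Gamma(\mathbf{A}t+\a)/(\theta^{t}\Gamma(\mathbf{B}t+\b))$ is completely monotonic on $(0,\infty)$. Granting this, I would run the chain of elementary implications. A completely monotonic function is log-convex: writing $V(t)=\int_{0}^{\infty}e^{-ts}\,d\nu(s)$ with $\nu\ge0$ (Bernstein's theorem) and applying the Cauchy--Schwarz inequality gives midpoint log-convexity $V\!\left(\tfrac{t_{1}+t_{2}}{2}\right)^{2}\le V(t_{1})V(t_{2})$, which upgrades to log-convexity on $(0,\infty)$ by continuity of $\log V$. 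Log-convexity of the function forces $V(n)^{2}\le V(n-1)V(n+1)$ for every $n\in\bN$, i.e. log-convexity of the sequence $\{V(n)\}_{n\ge0}$; and since each $V(n)$ is strictly positive, passing to reciprocals turns this into $[V(n)]^{-2}\ge[V(n-1)]^{-1}[V(n+1)]^{-1}$ for a strictly positive sequence, which is exactly log-concavity in the sense of Definition~\ref{DEF:doublypositiveseq} (the support condition being vacuous for a strictly positive sequence). Since a logarithmically completely monotonic function is in particular completely monotonic, it suffices to establish (logarithmic) complete monotonicity of $V$ case by case.

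It then remains to dispose of the four cases, and here the content is entirely borrowed from the cited literature. For \eqref{eq:nec1}$+$\eqref{eq:Pu-def} there is nothing to prove: this is verbatim the characterization of logarithmic complete monotonicity of $V$ in \cite[Theorem~4]{KPCMFT2016} and \cite[Theorem~3.2]{BCK2021}. For \eqref{eq:nec1}$+$\eqref{eq:BCKcor3.9} I would invoke \cite[Corollary~3.9]{BCK2021} to deduce that \eqref{eq:BCKcor3.9} implies the pointwise inequality \eqref{eq:Pu-def}, reducing to the first case. For \eqref{eq:nec1}$+$\eqref{eq:BCKtheor3.7} I would specialise the weights $h_{ji}$ in \cite[Theorem~3.7]{BCK2021} to $h_{ji}=\omega_{i}$ (independent of $j$) and check that the hypotheses of that theorem collapse precisely to \eqref{eq:nec1} together with \eqref{eq:BCKtheor3.7}, again yielding logarithmic complete monotonicity. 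Finally, for \eqref{eq:BCKtheor3.10} one rewrites $V(n)$ in the $P$-over-$Q$ product-of-Gammas form displayed just before the proposition (with $P=\sum_{j}\alpha_{j}$, $Q=\sum_{j}\beta_{j}$, common shift $a\ge1$, and $\theta=\prod_{j=1}^{p}(\beta_{j}/\alpha_{j})$), observes that this is exactly the class covered by \cite[Theorem~3.10]{BCK2021}, whose assumptions are precisely the ordering and majorisation conditions \eqref{eq:BCKtheor3.10}, and whose conclusion is complete monotonicity of $V$.

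The main obstacle is therefore not analytic but a matter of careful bookkeeping: verifying that the specialisations genuinely match. In particular one must confirm that setting $h_{ji}=\omega_{i}$ in \cite[Theorem~3.7]{BCK2021} reproduces \eqref{eq:BCKtheor3.7} under the normalisation $\omega_{i}\ge0$, $\sum_{i}\omega_{i}=1$, and that the re-indexing converting the $p$-fold product $\prod_{j}\Gamma^{\alpha_{j}}(t/\alpha_{j}+a)/\Gamma^{\beta_{j}}(t/\beta_{j}+a)$ into a genuine ratio $\prod_{k=1}^{P}\Gamma(A_{k}t+a)/\prod_{m=1}^{Q}\Gamma(B_{m}t+a)$ is compatible with the parameter conventions of \cite[Theorem~3.10]{BCK2021} (so that the conditions $0<\alpha_{1}\le\cdots\le\alpha_{p}$ and $\sum_{j\le k}\alpha_{j}\le\sum_{j\le k}\beta_{j}$ are exactly what that theorem requires). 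A minor but worthwhile point to state explicitly is why mere complete monotonicity of $V$ — rather than the a priori stronger logarithmic complete monotonicity, which we have in only three of the four cases — already suffices for sequence log-convexity; this is the Cauchy--Schwarz argument above and needs no extra hypothesis.
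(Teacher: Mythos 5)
Your proposal is correct and follows essentially the same route as the paper, which likewise treats the proposition as a summary of the cited complete-monotonicity criteria from \cite{KPCMFT2016} and \cite{BCK2021} and then passes from complete monotonicity of $t\mapsto V(t)$ to log-convexity of the function, to log-convexity of the integer sequence, to log-concavity of the reciprocals. The only addition on your part is spelling out the Bernstein/Cauchy--Schwarz argument for why complete monotonicity implies log-convexity, which the paper simply asserts.
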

More sufficient conditions and further details can be found in \cite[Theorem~5]{KPCMFT2016} and \cite[Theorems~3.4, 3.7, 3.10 and corollaries]{BCK2021}. 

Hence, by Theorem~\ref{THM:logconvexitysecondhyper}  we get
\begin{theorem}
 Suppose conditions of Proposition~\ref{prop:Vlc} hold for the sequence $\{V(n)\}_{n\ge0}$ defined in \eqref{eq:Vn}. Then the function $\mu\to g(\mu;x)$ defined in \eqref{eq:gFoxWright} is coefficient-wise log-convex on $[0,\infty)$.
\end{theorem}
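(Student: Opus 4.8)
The plan is to apply Theorem~\ref{THM:logconvexitysecondhyper} directly, using Proposition~\ref{prop:Vlc} to supply the required log-convexity hypothesis. Recall that $g(\mu;x)$ in \eqref{eq:gFoxWright} has the form
$$
g(\mu;x)=\sum_{n=0}^{\infty}V(n)\,\frac{(\mu)_{2n}}{(2n)!}\Big(\frac{x}{\theta}\Big)^n,
$$
where $V(n)=\Gamma(\mathbf{A}n+\a)/(\theta^n\Gamma(\mathbf{B}n+\b))$ is the coefficient sequence appearing in \eqref{eq:Vn}. Absorbing the factor $\theta^{-n}$ into $V(n)$ (which is already done in the definition \eqref{eq:Vn}) and rescaling $x\mapsto\theta x$, this is exactly the series $g(\mu;x)=\sum_{n\ge0}g_n(\mu)_{2n}/(2n)!\,x^n$ of Theorem~\ref{THM:logconvexitysecondhyper} with $g_n=V(n)$.

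First I would invoke Proposition~\ref{prop:Vlc}: under any of the stated sets of conditions the sequence $\{V(n)\}_{n\ge0}$ is log-convex. Second, I would note that $\{V(n)\}_{n\ge0}$ is a non-negative (in fact strictly positive, since log-convexity of a nontrivial sequence forces strict positivity, as recorded in Definition~\ref{DEF:doublypositiveseq}) real sequence independent of $\mu$. Third, I would apply Theorem~\ref{THM:logconvexitysecondhyper} with $g_n=V(n)$ to conclude that $\mu\to g(\mu;x)$ is coefficient-wise log-convex for $\mu>0$. Finally, to cover the endpoint $\mu=0$ claimed in the statement, I would observe that the power series coefficients $\delta_k=\delta_k(\mu,\alpha,\beta)$ of the generalized Tur\'{a}nian $\Delta_g(\alpha,\beta;x)$ are polynomials in $\mu$ (for fixed $\alpha,\beta$, each $\delta_k$ is a finite sum of products of rising factorials in $\mu$), hence continuous in $\mu$; since $\delta_k\le0$ for all $\mu>0$, letting $\mu\downarrow0$ gives $\delta_k\le0$ at $\mu=0$ as well, so coefficient-wise log-convexity extends to $[0,\infty)$.

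There is essentially no obstacle here: the theorem is a straightforward corollary obtained by checking that the hypotheses of Theorem~\ref{THM:logconvexitysecondhyper} are met. The only mild point requiring care is the bookkeeping of the rescaling $x/\theta$ versus $x$ and the placement of the $\theta^{-n}$ factor — but since coefficient-wise log-convexity is preserved under $x\mapsto cx$ with $c>0$ (it multiplies $\delta_k$ by $c^k>0$), and the $\theta^{-n}$ is already folded into $V(n)$, this causes no difficulty. One should also remark that the radius-of-convergence condition $\sum B_i\ge\sum A_j$ is automatically implied by the conditions of Proposition~\ref{prop:Vlc} (indeed \eqref{eq:nec1} and \eqref{eq:BCKtheor3.10} entail equality $\sum B_i=\sum A_j$), so $g(\mu;x)$ is a bona fide convergent series and not merely a formal one; but even as a formal power series the conclusion holds, since coefficient-wise log-convexity is defined regardless of convergence.
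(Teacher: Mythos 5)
Your proposal is correct and matches the paper exactly: the paper presents this theorem as an immediate corollary of Theorem~\ref{THM:logconvexitysecondhyper} applied with $g_n=V(n)$, whose log-convexity is supplied by Proposition~\ref{prop:Vlc}. The only small redundancy is that you both ``absorb $\theta^{-n}$ into $V(n)$'' and ``rescale $x\mapsto\theta x$'' --- the identity $\frac{\Gamma(\mathbf{A}n+\a)}{\Gamma(\mathbf{B}n+\b)}(x/\theta)^n=V(n)x^n$ already puts $g$ in the required form with no rescaling --- but as you note, positive rescaling preserves the sign of every $\delta_k$, so this is harmless.
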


Next,  define the functions
\begin{multline}\label{eq:fFoxWright}
    f(\mu;x):=\sum\limits_{n=1}^{\infty}\frac{(\mu)_{2n}}{V(n-1)(2n-1)!}x^n
    =\frac{x\sqrt{\pi}}{8\Gamma(\mu)}\sum\limits_{n=0}^{\infty}\frac{\Gamma(2n+\mu+2)}{V(n)\Gamma(3/2+n)}\frac{(x/4)^n}{n!}
\\
=\frac{x\sqrt{\pi}}{8\Gamma(\mu)}
{_{q+1}\Psi_{p+1}}\left(\left.\!\!\begin{array}{c}(\mu+2,2),(\b,\mathbf{B})\\(3/2,1),(\a,\mathbf{A})\end{array}\right|\frac{\theta{x}}{4}\!\right),
\end{multline}
and
\begin{equation}\label{eq:hFoxWright}
h(\mu;x):=\sum\limits_{n=0}^{\infty}\frac{\Gamma(\mathbf{B} n+\b)}{\Gamma(\mathbf{A} n+\a)}\frac{(\mu)_{2n}}{(\mu+1)_{n}n!}(\theta{x})^n=\mu\,{_{q+1}\Psi_{p+1}}\left(\left.\!\!\begin{array}{c}(\mu,2),(\b,\mathbf{B})\\(\mu+1,1),(\a,\mathbf{A})\end{array}\right|{\theta}x\!\right).
\end{equation}
The condition $\sum_{i=1}^{p}A_i\ge\sum_{i=j}^{q}B_j$ ensures positive radii of convergence for $f$ and $h$. Set

\begin{equation}\label{eq:hhatFoxWright}
\hat{h}(\mu;x):=\sum\limits_{n=0}^{\infty}\frac{\Gamma(\mathbf{A} n+\a)}{\Gamma(\mathbf{B} n+\b)}
\frac{(\mu)_{2n}}{(\mu+1)_{n}n!}\Big(\frac{x}{\theta}\Big)^n=\mu\,{_{p+1}\Psi_{q+1}}\left(\left.\!\!\begin{array}{c}(\mu,2),(\a,\mathbf{A})\\(\mu+1,1),(\b,\mathbf{B})\end{array}\right|\frac{x}{\theta}\!\right).
\end{equation}
The condition $\sum_{i=j}^{q}B_j\ge\sum_{i=1}^{p}A_i$ ensures a positive radius of convergence for $\hat{h}$.

Application of Theorem~\ref{THM:logconcavitysecondhyper} to the function \eqref{eq:fFoxWright} yields:

\begin{theorem} 
Suppose conditions of Proposition~\ref{prop:Vlc} hold for the sequence $\{V(n)\}_{n\ge0}$ defined in \eqref{eq:Vn}. Then the function $\mu\to f(\mu;x)$ defined in \eqref{eq:fFoxWright} is coefficient-wise log-concave on $[0,\infty)$.
\end{theorem}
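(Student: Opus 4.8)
The plan is to reduce the claimed coefficient-wise log-concavity of $\mu\to f(\mu;x)$ in \eqref{eq:fFoxWright} directly to Theorem~\ref{THM:logconcavitysecondhyper}. First I would observe that the first displayed identity in \eqref{eq:fFoxWright} exhibits $f(\mu;x)$ precisely in the form treated by Theorem~\ref{THM:logconcavitysecondhyper}, namely
\[
f(\mu;x)=\sum_{n=1}^{\infty}f_n\,\frac{(\mu)_{2n}}{(2n-1)!}\,x^n,\qquad f_n:=\frac{1}{V(n-1)}.
\]
Thus the whole statement hinges on verifying that the sequence $\{f_n\}_{n\ge1}=\{[V(n-1)]^{-1}\}_{n\ge1}$ is log-concave and independent of $\mu$ (the latter being immediate, since $V(n)$ in \eqref{eq:Vn} involves only the fixed data $\mathbf{A},\mathbf{B},\a,\b,\theta$).

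The log-concavity of $\{f_n\}$ is where the hypotheses of Proposition~\ref{prop:Vlc} enter. Under any of the listed sets of conditions, Proposition~\ref{prop:Vlc} asserts that $\{V(n)\}_{n\ge0}$ is log-convex, hence strictly positive, and therefore $\{[V(n)]^{-1}\}_{n\ge0}$ is log-concave (this is the elementary fact, already invoked before the first hypergeometric theorem in Section~\ref{SEC:app}, that reciprocals of a positive log-convex sequence form a log-concave sequence). A trivial index shift $n\mapsto n-1$ then shows $\{f_n\}_{n\ge1}=\{[V(n-1)]^{-1}\}_{n\ge1}$ is log-concave as well, and positivity guarantees it is nontrivial and supported on all of $\{1,2,\ldots\}$, so Definition~\ref{DEF:doublypositiveseq} is satisfied.

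With these two observations in hand, Theorem~\ref{THM:logconcavitysecondhyper} applies verbatim to $f(\mu;x)=\sum_{n\ge1}f_n(\mu)_{2n}x^n/(2n-1)!$ and yields that $\mu\to f(\mu;x)$ is coefficient-wise log-concave; since $V$ is defined and positive for all the parameter values in question, this holds for $\mu\in[0,\infty)$, as claimed. I do not expect any genuine obstacle here: the proof is essentially a dictionary translation between the Fox--Wright normalization in \eqref{eq:fFoxWright} and the abstract series of Theorem~\ref{THM:logconcavitysecondhyper}, plus the reciprocal-sequence remark. The only point deserving a sentence of care is bookkeeping the $\Gamma(\mu)$, the power of $x$, and the constant $\sqrt{\pi}/8$ pulled out front: these are $\mu$-dependent only through the overall factor $x\sqrt{\pi}/(8\Gamma(\mu))$, which is a positive multiplicative constant in $x$ and cancels identically in the generalized Tur\'anian $\Delta_f(\alpha,\beta;x)$, so it does not affect the sign of any coefficient. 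Hence one may work with the bare series $\sum_{n\ge1}f_n(\mu)_{2n}x^n/(2n-1)!$ without loss of generality, and the conclusion follows.
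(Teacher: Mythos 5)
Your proposal is correct and is exactly the argument the paper intends: the first equality in \eqref{eq:fFoxWright} already presents $f(\mu;x)$ as $\sum_{n\ge1}f_n(\mu)_{2n}x^n/(2n-1)!$ with $f_n=1/V(n-1)$, Proposition~\ref{prop:Vlc} gives log-convexity of $\{V(n)\}_{n\ge0}$ and hence log-concavity of the positive sequence $\{1/V(n-1)\}_{n\ge1}$, and Theorem~\ref{THM:logconcavitysecondhyper} applies verbatim. One small correction to your closing remark: a $\mu$-dependent prefactor such as $1/\Gamma(\mu)$ does \emph{not} cancel identically in the generalized Tur\'anian (that would require $\Gamma(\mu+\alpha)\Gamma(\mu+\beta)=\Gamma(\mu)\Gamma(\mu+\alpha+\beta)$, which is false); this is harmless here only because $f(\mu;x)$ is \emph{defined} by the bare series, the Fox--Wright expression with the prefactor being merely an equivalent representation rather than the object whose Tur\'anian is being analyzed.
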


Application of Theorem~\ref{THM:discretelogconcavity} to the function \eqref{eq:hFoxWright} yields:

\begin{theorem} 
Suppose conditions of Proposition~\ref{prop:Vlc} hold for the sequence $\{V(n)\}_{n\ge0}$ defined in \eqref{eq:Vn}. Then the function $\mu\to h(\mu;x)$ defined in \eqref{eq:hFoxWright} is coefficient-wise log-concave for shifts $\alpha+\beta\le4$ on $[0,\infty)$ and log-concave for the shifts $\alpha,\beta\in\mathbb{N}$; the function $\mu\to\hat{h}(\mu;x)$ defined in \eqref{eq:hhatFoxWright} is coefficient-wise log-convex for shifts $\alpha+\beta\le4$ on $[0,\infty)$ and log-convex for the shifts $\alpha,\beta\in\mathbb{N}$.
\end{theorem}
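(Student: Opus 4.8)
The plan is to recognize $h$ and $\hat h$ as instances of the series \eqref{EQ:firstconcavity} and then invoke Theorem~\ref{THM:discretelogconcavity} together with Proposition~\ref{prop:Vlc}. First I would use the definition \eqref{eq:Vn} of $V(n)$ to rewrite \eqref{eq:hFoxWright} as
\[
h(\mu;x)=\sum_{n=0}^{\infty}\frac{1}{V(n)}\,\frac{(\mu)_{2n}}{(\mu+1)_n\,n!}\,x^n ,
\]
the point being that the factor $\theta^n$ in \eqref{eq:hFoxWright} is exactly what converts $\Gamma(\mathbf{B}n+\b)/\Gamma(\mathbf{A}n+\a)$ into $1/V(n)$; in the same way \eqref{eq:hhatFoxWright} becomes $\hat h(\mu;x)=\sum_{n\ge0}V(n)\,(\mu)_{2n}/((\mu+1)_n n!)\,x^n$. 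Hence, in the notation of \eqref{EQ:firstconcavity}, the generic $\mu$-independent coefficient sequence is $h_n=1/V(n)$ for $h$ and $h_n=V(n)$ for $\hat h$.

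Next I would apply Proposition~\ref{prop:Vlc}: under any of its sets of hypotheses $\{V(n)\}_{n\ge0}$ is log-convex; moreover each $V(n)$ is a finite product and quotient of values of $\Gamma$ at positive real arguments, hence strictly positive, so $\{1/V(n)\}_{n\ge0}$ is a strictly positive log-concave sequence in the sense of Definition~\ref{DEF:doublypositiveseq} (the support-on-an-interval condition being automatic for strictly positive sequences). Then Theorem~\ref{THM:discretelogconcavity} applied with $\{h_k\}=\{1/V(n)\}$ gives $\Delta_h(\alpha,\beta;x)\ge0$ for all $\alpha,\beta\in\bN$ and non-negativity of every power-series coefficient of $\Delta_h(\alpha,\beta;x)$ when $\alpha+\beta\le4$; applied with the log-convex sequence $\{h_k\}=\{V(n)\}$ it gives the reversed inequalities for $\hat h$, which is exactly the assertion of the theorem.

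Finally I would address the one bookkeeping point, the rescaling of the series variable. Since $\theta>0$, passing from the $x$ of \eqref{EQ:firstconcavity} to $\theta^{\pm1}x$ multiplies the $k$-th Taylor coefficient of the corresponding generalized Tur\'{a}nian by $\theta^{\pm k}>0$ and therefore preserves both the signs of those coefficients and the pointwise-in-$x$ Tur\'{a}n inequality for integer shifts; the positive radii of convergence recorded after \eqref{eq:hFoxWright} and \eqref{eq:hhatFoxWright} are in force because the hypotheses of Proposition~\ref{prop:Vlc} force $\sum_i A_i=\sum_j B_j$. There is no genuine obstacle here: the mathematical content is carried entirely by Theorem~\ref{THM:discretelogconcavity} and Proposition~\ref{prop:Vlc}, and the mildest point needing care is simply tracking where the constant $\theta$ goes.
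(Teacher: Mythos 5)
Your proposal is correct and is exactly the argument the paper intends: absorb $\theta^{\pm n}$ into the coefficient sequence so that $h$ and $\hat h$ become instances of \eqref{EQ:firstconcavity} with the log-concave sequence $\{1/V(n)\}$ and the log-convex sequence $\{V(n)\}$ respectively (via Proposition~\ref{prop:Vlc}), and then invoke Theorem~\ref{THM:discretelogconcavity}. The paper states this as an immediate application without further detail, so your write-up, including the observation that strict positivity of $V(n)$ makes the interval-support condition of Definition~\ref{DEF:doublypositiveseq} automatic, is if anything slightly more careful than the original.
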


Finally, note that Theorem~\ref{THM:logconvexitysechyper} implies that 
$$
\mu\to4^{\mu}\Gamma(\mu+1/2)
{_{p+1}\Psi_{q+1}}\left(\left.\!\!\begin{array}{c}(\mu,1),(\a,\mathbf{A})\\(2\mu,1),(\b,\mathbf{B})\end{array}\right|x\!\right)
$$
is coefficient-wise log-convex for all positive values of $\mathbf{A}$, $\mathbf{B}$, $\a$, $\b$.

\paragraph{Acknowledgments.} This work was completed during the stay of D.Karp at the Institute of Mathematics and Informatics of the Bulgarian Academy of Sciences in Sofia, Bulgaria.  The first author expresses his gratitude to the Institute and, especially, to Virginia Kiryakova for hospitality and acknowledges support of PIKOM and Simons programs.  The work of Y.\:Zhang was supported by the NSFC grants No.\ 12101506 and No.\ 12371520, the Natural Science Foundation of the Jiangsu Higher Education Institutions of China No.\ 21KJB110032, and XJTLU Research Development Fund No.\ RDF-20-01-12.

%
\def\cprime{$'$}

\section*{Appendix} \label{SEC:appendix}

\begin{proposition} 
 The sequence $A_0,A_1,\ldots,A_{[m/2]}$ defined by~\eqref{EQ:tildeA} has exactly one change of sign. 
\end{proposition}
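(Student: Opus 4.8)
The plan is to leverage two facts that are already established inside the proof of Theorem~\ref{THM:logconvexitysecondhyper}: the sequence $\tilde A_0,\ldots,\tilde A_{[m/2]}$ (with $\tilde A_k$ as in~\eqref{EQ:tildeA}, having the same sign as $A_k$) has no more than one change of sign, and $\sum_{k=0}^{[m/2]}A_k<0$ for every $m\ge1$. In fact the first statement comes from the implication $\tilde A_k\le0\Rightarrow\tilde A_{k-1}<0$ (Lemma~\ref{LEM:onesignchange} together with the fact, recorded below, that $\tilde A_{m/2}>0$ when $m$ is even), so the sign pattern of the sequence is forced to be $(-,\ldots,-,\ge0,+,\ldots,+)$. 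Consequently, to upgrade ``at most one'' to ``exactly one'' it suffices to exhibit one strictly negative and one strictly positive entry. I assume $m\ge2$ throughout (for $m=1$ the sequence has the single, negative, entry $\tilde A_0$). A strictly negative entry is immediate: since the sum is negative the entries cannot all be $\ge0$; equivalently $\tilde A_0=(\mu+\alpha)_{2m}+(\mu+\beta)_{2m}-(\mu)_{2m}-(\mu+\alpha+\beta)_{2m}<0$ because $s\mapsto(\mu+s)_{2m}$ is strictly convex, its second derivative being $(\mu+s)_{2m}\big[(\sum_j(\mu+s+j)^{-1})^2-\sum_j(\mu+s+j)^{-2}\big]>0$ for $2m\ge2$. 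It then remains to prove that the last entry $\tilde A_{[m/2]}$ is strictly positive, which I do by distinguishing the parity of $m$.

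If $m=2\ell$ is even, then $\tilde A_\ell=(\mu+\alpha)_{2\ell}(\mu+\beta)_{2\ell}-(\mu)_{2\ell}(\mu+\alpha+\beta)_{2\ell}$ is exactly the generalized Tur\'{a}nian of $\mu\mapsto(\mu)_{2\ell}$, which is strictly log-concave by~\eqref{eq:logDmultiplePoch} (the inequality there is strict for $n\ge1$). Writing $g(\mu)=\log((\mu)_{2\ell})$, the derivative $g'$ is strictly decreasing, so $g(\mu+\alpha)-g(\mu)>g(\mu+\alpha+\beta)-g(\mu+\beta)$ for $\alpha,\beta>0$, which exponentiates to $\tilde A_\ell>0$.

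If $m=2\ell+1$ is odd (so $\ell\ge1$), set $c_s=(\mu+s+2\ell)(\mu+s+2\ell+1)$, so that $(\mu+s)_{2\ell+2}=c_s(\mu+s)_{2\ell}$. Rewriting the four products appearing in $\tilde A_\ell$ accordingly gives
\[
\tilde A_\ell=(\mu+\alpha)_{2\ell+2}(\mu+\beta)_{2\ell+2}\Big(\tfrac1{c_\alpha}+\tfrac1{c_\beta}\Big)-(\mu)_{2\ell+2}(\mu+\alpha+\beta)_{2\ell+2}\Big(\tfrac1{c_0}+\tfrac1{c_{\alpha+\beta}}\Big),
\]
so, dividing by $(\mu)_{2\ell+2}(\mu+\alpha+\beta)_{2\ell+2}>0$, the claim $\tilde A_\ell>0$ becomes $\rho\big(\tfrac1{c_\alpha}+\tfrac1{c_\beta}\big)>\tfrac1{c_0}+\tfrac1{c_{\alpha+\beta}}$, where $\rho=\prod_{j=0}^{2\ell+1}(1+x_j)$ with $x_j=\tfrac{\alpha\beta}{(\mu+j)(\mu+j+\alpha+\beta)}$ is the ratio of the two factorial products (expand each factor as $\tfrac{(\mu+j+\alpha)(\mu+j+\beta)}{(\mu+j)(\mu+j+\alpha+\beta)}$). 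Since $s\mapsto c_s$ is a monic quadratic one checks directly that $c_0+c_{\alpha+\beta}=c_\alpha+c_\beta+2\alpha\beta$ and $\tfrac{c_\alpha c_\beta}{c_0c_{\alpha+\beta}}=(1+x_{2\ell})(1+x_{2\ell+1})$; substituting these two identities, the inequality collapses exactly to
\[
\prod_{j=0}^{2\ell-1}(1+x_j)>1+\frac{2\alpha\beta}{c_\alpha+c_\beta}.
\]
I would finish by bounding the left-hand side below by $1+x_0=1+\tfrac{\alpha\beta}{\mu(\mu+\alpha+\beta)}$ (keeping only the $j=0$ factor, which is legitimate since $\ell\ge1$) and the right-hand side above by $1+\tfrac{\alpha\beta}{c_{(\alpha+\beta)/2}}$ (convexity of $s\mapsto c_s$ gives $c_\alpha+c_\beta\ge2c_{(\alpha+\beta)/2}$); the remaining elementary inequality $c_{(\alpha+\beta)/2}>\mu(\mu+\alpha+\beta)$ holds because its difference expands to $\big(2\ell+\tfrac{\alpha+\beta}2\big)^2+4\ell\mu+\mu+2\ell+\tfrac{\alpha+\beta}2>0$ for $\mu>0$.

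Putting everything together, $A_0<0<A_{[m/2]}$ while the whole sequence changes sign at most once, hence exactly once. I expect the odd case to be the main obstacle: there $\tilde A_{[m/2]}$ is a sum $u+v-r-s$ to which Lemma~\ref{LEM:positivity} does not apply directly, because the largest of the four products can lie on the ``negative'' side $\{r,s\}$; moreover $\tilde A_{[m/2]}$ can be extremely small relative to its summands, so the estimates must be kept tight — discarding all but the $j=0$ factor of $\prod_{j=0}^{2\ell-1}(1+x_j)$ is just within the slack that the final quadratic comparison can absorb.
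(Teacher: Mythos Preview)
Your proof is correct. The even case ($m=2\ell$) and the ``there is a negative entry'' part coincide with the paper's reasoning. The genuine novelty is your treatment of the odd case $m=2\ell+1$.

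The paper handles the odd case by induction on $\ell$: it verifies $\tilde A_1>0$ by a long explicit polynomial expansion, and for the inductive step writes $\tilde A_{\ell+1}=F(\alpha)$ as a function of $\alpha$ (treating $a_\ell,b_\ell,c_\ell,d_\ell$ as constants), checks $F(0)>0$ using the inductive hypothesis $\tilde A_\ell>0$ together with $b_\ell>d_\ell$, and then shows $F'(\alpha)>0$ via another explicit (and somewhat heavy) polynomial computation.

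Your route is direct and induction-free: factoring out $c_s=(\mu+s+2\ell)(\mu+s+2\ell+1)$, you reduce $\tilde A_\ell>0$ to $\prod_{j=0}^{2\ell-1}(1+x_j)>1+\tfrac{2\alpha\beta}{c_\alpha+c_\beta}$ via the two clean identities $c_0+c_{\alpha+\beta}=c_\alpha+c_\beta+2\alpha\beta$ and $c_\alpha c_\beta/(c_0c_{\alpha+\beta})=(1+x_{2\ell})(1+x_{2\ell+1})$, and finish with the single elementary bound $c_{(\alpha+\beta)/2}>\mu(\mu+\alpha+\beta)$. This is considerably more transparent than the paper's computation: it avoids the explicit base case and the derivative argument, and it makes clear exactly where the slack lies (the discarded factors $1+x_1,\ldots,1+x_{2\ell-1}$). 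The paper's inductive approach, on the other hand, is more mechanical and perhaps easier to discover without the algebraic insight that the two $c$-identities collapse the problem so neatly.
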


\begin{proof}
In the proof Theorem~\ref{THM:logconvexitysecondhyper}, we have shown that: 
(i). $\sum_{k = 0}^{[m/2]} A_k < 0$ for $m \geq 1$; (ii) If $\tilde{A}_k \leq 0$ for some $k < m/2$, then $\tilde{A}_{k - 1} \leq 0$.
Thus, it suffices to prove that  $\tilde{A}_{[m/2]} > 0$ for each $m \geq 2$. Set $k = [m/2]$. 

(a). If $k = m/2 \geq 1$, then $\tilde{A}_k = (\mu + \alpha)_{2 k} (\mu + \beta)_{2 k} - (\mu)_{2 k} (\mu + \alpha + \beta)_{2 k} > 0$ 
because the function $x \mapsto (x + \tau)/x$ is strictly decreasing for any positive $x$ and~$\tau$.

(b). If $k = (m - 1)/2  \geq 1$, \ie, $m = 2 k + 1$, then 
\begin{align*}
\tilde{A}_k & =  \underbrace{(\mu + \alpha)_{2 k} (\mu + \beta)_{2 (k + 1)}}_{a_k} + \underbrace{(\mu + \alpha)_{2 (k + 1)} (\mu + \beta)_{2 k}}_{b_k} \\
      & \quad -\underbrace{(\mu)_{2 k} (\mu + \alpha + \beta)_{2 (k + 1)}}_{c_k} - \underbrace{(\mu)_{2 (k + 1)} (\mu + \alpha + \beta)_{2 k}}_{d_k}.
\end{align*}
We will prove that $\tilde{A}_k > 0$ for $k \geq 1$ by induction. For $k = 1$, we have 
\begin{multline*}
\tilde{A}_1 = \alpha  \beta  \left(\alpha ^3 \beta +2 \alpha ^3 \mu +\alpha ^3+4 \alpha ^2 \beta  \mu +6
   \alpha ^2 \beta +4 \alpha ^2 \mu ^2+12 \alpha ^2 \mu +6 \alpha ^2+\alpha  \beta ^3 \right. \\
  +4
   \alpha  \beta ^2 \mu +6 \alpha  \beta ^2+6 \alpha  \beta  \mu ^2+30 \alpha  \beta  \mu
   +22 \alpha  \beta +4 \alpha  \mu ^3+30 \alpha  \mu ^2+44 \alpha  \mu +17 \alpha \\
   +2 \beta
   ^3 \mu +\beta ^3+4 \beta ^2 \mu ^2+12 \beta ^2 \mu +6 \beta ^2+4 \beta  \mu ^3+30 \beta 
   \mu ^2+44 \beta  \mu +17 \beta +2 \mu ^4 \\
   \left. +20 \mu ^3+44 \mu ^2+34 \mu +12\right) > 0 . 
\end{multline*}
Assume that $\tilde{A}_k = a_k + b_k - c_k - d_k > 0$. Consider 
\begin{align*}
F(\alpha) & := \tilde{A}_{k + 1} \\
& = a_k (\mu + \alpha + 2 k)_2 (\mu + \beta + 2 k + 2)_2 +  b_k (\mu + \alpha + 2 k + 2)_2 (\mu + \beta + 2 k)_2 \\
& \quad -  c_k (\mu + 2 k)_2 (\mu + \alpha + \beta + 2 k + 2)_2 -  d_k (\mu + 2 k + 2)_2 (\mu + \alpha + \beta + 2 k)_2. 
\end{align*}
Then 
\begin{align*}
F(0) & =  a_k (\mu + 2 k)_2 (\mu + \beta + 2 k + 2)_2 +  b_k (\mu + 2 k + 2)_2 (\mu + \beta + 2 k)_2\\
& \quad -  c_k (\mu + 2 k)_2 (\mu + \beta + 2 k + 2)_2 -  d_k (\mu + 2 k + 2)_2 (\mu + \beta + 2 k)_2 \\
& = (a_k + b_k - c_k - d_k)  (\mu + 2 k)_2 (\mu + \beta + 2 k + 2)_2  \\
& \quad + (b_k - d_k) \left[(\mu + 2 k + 2)_2 (\mu + \beta + 2 k)_2  -   (\mu + 2 k)_2 (\mu + \beta + 2 k + 2)_2 \right] \\
& = (a_k + b_k - c_k - d_k)  (\mu + 2 k)_2 (\mu + \beta + 2 k + 2)_2  \\
& \quad + 2 (b_k - d_k) \beta  \left(2 \beta  \mu +3 \beta +8 k^2+4 \beta  k+8 k \mu +12 k+2 \mu ^2+6 \mu
   +3\right) > 0  
\end{align*}
because $b_k > d_k$. Regarding $a_k, b_k, c_k, d_k$ as constants and differentiating with respect to $\alpha$, we have 
\begin{align*}
F'(\alpha) & = a_k (2 \mu + 2 \alpha + 4 k + 1) (\mu + \beta + 2 k + 2)_2 \\
& \quad + b_k (2 \mu + 2 \alpha + 4 k + 5) (\mu + \beta + 2 k)_2 \\
& \quad - c_k (2 \mu + 2 \alpha + 2 \beta + 4 k + 5) (\mu + 2 k)_2 \\
& \quad - d_k (2 \mu + 2 \alpha + 2 \beta + 4 k + 1) (\mu + 2 k + 2)_2 \\
& = (a_k + b_k - c_k - d_k) (2 \mu + 2 \alpha + 2 \beta + 4 k + 5) (\mu + 2 k)_2  \\
& + a_k \left[ (2 \mu + 2 \alpha + 4 k + 1) (\mu + \beta + 2 k + 2)_2 -  (2 \mu + 2 \alpha + 2 \beta + 4 k + 5) (\mu + 2 k)_2\right] \\
& + b_k \left[ (2 \mu + 2 \alpha + 4 k + 5) (\mu + \beta + 2 k)_2 - (2 \mu + 2 \alpha + 2 \beta + 4 k + 5) (\mu + 2 k)_2 \right] \\
& - d_k \left[(2 \mu + 2 \alpha + 2 \beta + 4 k + 1) (\mu + 2 k + 2)_2 - (2 \mu + 2 \alpha + 2 \beta + 4 k + 5) (\mu + 2 k)_2  \right] \\
& > (a_k + b_k - c_k - d_k) (2 \mu + 2 \alpha + 2 \beta + 4 k + 5) (\mu + 2 k)_2 \\
& d_k \left[(2 \mu + 2 \alpha + 4 k + 1) (\mu + \beta + 2 k + 2)_2 + (2 \mu + 2 \alpha + 4 k + 5) (\mu + \beta + 2 k)_2\right. \\
& \left. - (2 \mu + 2 \alpha + 2 \beta + 4 k + 1) (\mu + 2 k + 2)_2 - (2 \mu + 2 \alpha + 2 \beta + 4 k + 5) (\mu + 2 k)_2  \right] \\
& =  (a_k + b_k - c_k - d_k) (2 \mu + 2 \alpha + 2 \beta + 4 k + 5) (\mu + 2 k)_2 \\
& + 2  d_k  \beta  \left(2 \alpha  \beta +4 \alpha  \mu +6 \alpha +2 \beta  \mu +3 \beta +8 k^2+8
   \alpha  k+4 \beta  k+8 k \mu +12 k \right. \\
   & \left.+2 \mu ^2+6 \mu -1\right) > 0.
\end{align*}
The  first inequality above holds since $\min(a_k, b_k) > d_k$, and the second one is true because $k \geq 1$. Thus, we see 
that $\tilde{A}_{k + 1} = F(\alpha) \geq F(0) > 0$. 
\end{proof}

\end{document}